\numberwithin{equation}{section}
\newtheorem{theorem}{Theorem}[section]
\newtheorem{cor}[theorem]{Corollary}
\newtheorem{lemma}[theorem]{Lemma}
\newtheorem{remark}[theorem]{Remark}
\newtheorem{prop}[theorem]{Proposition}
\newtheorem{obs}[theorem]{Observation}
\newtheorem{defin}[theorem]{Definition}
\newcommand{\cF}{\mathcal{F}}
\newcommand{\R}{\mathbb{R}}
\newcommand{\C}{\mathbb{C}}
\newcommand{\N}{\mathbb{N}}
\newcommand{\Rn}{\R^n}
\newcommand{\Rnn}{\R^{n \times n}}
\newcommand{\Rp}{\R_{\geq 0}}
\newcommand{\Rpn}{\R_{\geq 0}^n}
\newcommand{\Rpnn}{\R_{\geq 0}^{n \times n}}
\newcommand{\Rppn}{\R_{> 0}^n}
\newcommand{\Rppnn}{\R_{> 0}^{n \times n}}
\newcommand{\Prob}{\mathbb{P}}	
\newcommand*{\E}{\mathbb{E}}
\newcommand{\supp}{\operatorname{supp}}
\newcommand{\suppK}{\operatorname{supp}(K)}
\newcommand{\Hell}{\mathsf{H}}
\newcommand{\TV}{\mathsf{TV}}
\providecommand{\diag}{\operatorname{\mathbb{D}}}
\newcommand{\bone}{\mathbf{1}}
\newcommand{\zero}{\mathbf{0}}
\newcommand*{\Otilde}{\tilde{O}}
\DeclareMathOperator{\logdel}{\log\tfrac{1}{\delta}}
\DeclareMathOperator{\logtdel}{\log\tfrac{2}{\delta}}
\newcommand{\diamG}{d}
\newcommand*{\ep}{\varepsilon}
\newcommand*{\eps}{\varepsilon}
\newcommand{\plusminus}{\raisebox{.2ex}{$\scriptstyle\pm$}}
\DeclareMathOperator*{\argmin}{argmin}
\DeclareMathOperator*{\argmax}{argmax}
\renewcommand{\leq}{\leqslant}
\renewcommand{\geq}{\geqslant}
\DeclareMathOperator{\half}{\frac{1}{2}}
\providecommand{\abs}[1]{\left\lvert#1\right\rvert}
\newcommand*{\BALK}{\textsc{BAL}(K)}
\newcommand*{\ABAL}{\textsc{ABAL}}
\newcommand*{\ABALKeps}{\textsc{ABAL}(K,\eps)}
\providecommand{\varnorm}{\operatorname{var}}
\providecommand{\var}[1]{\lVert{#1}\rVert_{\varnorm}}
\newcommand*{\Kmax}{K_{\max}}
\newcommand*{\Kmin}{K_{\min}}
\newcommand*{\condK}{\kappa}
\newcommand*{\xt}{x^{(t)}}
\newcommand*{\xtm}{x^{(t-1)}}
\newcommand*{\xtp}{x^{(t+1)}}
\newcommand*{\xzero}{x^{(0)}}
\newcommand*{\xone}{x^{(1)}}
\newcommand*{\xtwo}{x^{(2)}}
\def\blfootnote{\gdef\@thefnmark{}\@footnotetext}
\begin{document}
	\title{Near-linear convergence of the Random Osborne algorithm for Matrix Balancing}
	\author{Jason M. Altschuler \and Pablo A. Parrilo}
	\date{}
	\maketitle

\blfootnote{The authors are with the Laboratory for Information and Decision Systems (LIDS), Massachusetts Institute of Technology, Cambridge MA 02139. Work partially supported by NSF AF 1565235, NSF Graduate Research Fellowship 1122374, and a TwoSigma PhD Fellowship.}

\begin{abstract}
	We revisit Matrix Balancing, a pre-conditioning task used ubiquitously for computing eigenvalues and matrix exponentials. Since 1960, Osborne's algorithm has been the practitioners' algorithm of choice and is now implemented in most numerical software packages. However, its theoretical properties are not well understood. Here, we show that a simple random variant of Osborne's algorithm converges in near-linear time in the input sparsity. Specifically, it balances $K \in \Rpnn$ after $O(m \eps^{-2} \log \condK)$ arithmetic operations in expectation and with high probability, where $m$ is the number of nonzeros in $K$, $\eps$ is the $\ell_1$ accuracy, and $\condK = \sum_{ij} K_{ij} / ( \min_{ij : K_{ij} \neq 0} K_{ij})$ measures the conditioning of $K$. Previous work had established near-linear runtimes either only for $\ell_2$ accuracy (a weaker criterion which is less relevant for applications), or through an entirely different algorithm based on (currently) impractical Laplacian solvers.
	\par We further show that if the graph with adjacency matrix $K$ is moderately well-connected---e.g., if $K$ has at least one positive row/column pair---then Osborne's algorithm initially converges exponentially fast, yielding an improved runtime $O(m \eps^{-1} \log \condK)$. We also address numerical precision by showing that these runtime bounds still hold when using $O(\log(n\condK/\eps))$-bit numbers.
	\par Our results are established through an intuitive potential argument that leverages a convex optimization perspective of Osborne's algorithm, and relates the per-iteration progress to the current imbalance as measured in Hellinger distance. Unlike previous analyses, we critically exploit log-convexity of the potential. Notably, our analysis extends to other variants of Osborne's algorithm: along the way, we also establish significantly improved runtime bounds for cyclic, greedy, and parallelized variants of Osborne's algorithm.
\end{abstract}

\section{Introduction}\label{sec:intro}
Let $\bone$ denote the all-ones vector in $\Rn$. A nonnegative square matrix $A \in \Rpnn$ is said to be \emph{balanced} if its row sums $r(A) := A\bone$ equal its column sums $c(A) := A^T\bone$, i.e.
\begin{align}
r(A) = c(A).
\label{eq-def:balance}
\end{align}
This paper revisits the classical problem of \emph{Matrix Balancing}---sometimes also called \textit{diagonal similarity scaling} or \textit{line-sum-symmetric scaling}---which asks: given a nonnegative matrix $K \in \Rpnn$, find a positive diagonal matrix $D$ (if one exists\footnote{$K$ can be balanced if and only if $K$ is irreducible~\citep{EavHofRotSch85}. This can be efficiently checked in linear time~\citep{TarjanStronglyConnected}.
}) such that $A := DKD^{-1}$ is balanced.
\par Matrix Balancing is a fundamental problem in numerical linear algebra, scientific computing, and theoretical computer science with many applications and an extensive literature dating back to 1960. A particularly celebrated application of Matrix Balancing is pre-conditioning matrices before linear algebraic computations such as eigenvalue decomposition~\citep{Osborne60,ParRei69} and matrix exponentiation~\citep{Ward77,Higham05}. The point is that performing these linear algebra tasks on a balanced matrix can drastically improve numerical stability and readily recovers the desired answer on the original matrix~\citep{Osborne60}. Moreover, in practice, the runtime of (approximate) Matrix Balancing is essentially negligible compared to the runtime of these downstream tasks~\citep[\S11.6.1]{PreTeuVetFla07}. 
The ubiquity of these applications has led to the implementation of Matrix Balancing in most linear algebra software packages, including EISPACK~\citep{Eispack}, LAPACK~\citep{Lapack}, R~\citep{Rbal}, and MATLAB~\citep{MATLABbal}. 
In fact, Matrix Balancing is performed by default in the command for eigenvalue decomposition in MATLAB~\citep{MATLABeig} and in the command for matrix exponentation for R~\citep{Rexpm}. Matrix Balancing also has other diverse applications in economics~\citep{SchZen90}, information retrieval~\citep{Tom03}, and combinatorial optimization~\citep{AltPar20mmc}.

\par In practice, Matrix Balancing is performed approximately rather than exactly, since this can be done efficiently and typically suffices for applications. Specifically, in the \emph{approximate Matrix Balancing problem}, the goal is to compute a scaling $A := DKD^{-1}$ that is $\eps$\emph{-balanced} in the $\ell_1$ sense, i.e.,
\begin{align}
\frac{\|r(A) - c(A)\|_1}{\sum_{ij} A_{ij}} \leq \eps.
\label{eq-def:balance-approx}
\end{align}

\begin{remark}[$\ell_1$ versus $\ell_2$ imbalance]
	Several papers~\citep{KalKhaSho97,OstRabYou17} study approximate Matrix Balancing with $\ell_2$ norm imbalance---rather than $\ell_1$ as done here in~\eqref{eq-def:balance-approx} and in  e.g.,~\citep{NemRot99}---for what appears to be essentially historical reasons.
	Here, we focus solely on the $\ell_1$ imbalance as it appears to be more useful for applications---e.g., it is critical for near-linear time approximation of the Min-Mean-Cycle problem~\citep{AltPar20mmc}---in large part due to its natural interpretations in both probabilistic problems (as total variation imbalance) and graph theoretic problems (as netflow imbalance)~\citep[Remarks 2.1 and 5.8]{AltPar20mmc}.\footnote{The analogous observation has also been made for the intimately related problem of Matrix Scaling. For example, the $\ell_1$ norm is pivotal there for applications including Optimal Transport~\citep{AltWeeRig17} and Bipartite Perfect Matching~\citep{ChaKha18}.}
	Note also that the approximate balancing criterion~\eqref{eq-def:balance-approx} is significantly easier to achieve\footnote{
		As a simple concrete example, let $n$ be even and consider 
		the $n \times n$ matrix $A$ which is $0$ everywhere except is the identity on the top right $n/2 \times n/2$ block. 
		Note that $r(A)/\sum_{ij} A_{ij} = [\tfrac{2}{n}\bone_{n/2}, \zero_{n/2}]^T$ and $c(A)/\sum_{ij} A_{ij} = [\zero_{n/2}, \tfrac{2}{n}\bone_{n/2}]^T$. 
		Thus $A$ is \emph{as unbalanced as possible} in $\ell_1$ norm since $\|r(A) - c(A)\|_1/\sum_{ij}A_{ij} = 2$; however, $A$ is very well balanced in $\ell_2$ norm since $\|r(A) - c(A)\|_2 / \sum_{ij}A_{ij} = 2/\sqrt{n}$ is vanishingly small.
	} for $\ell_2$ imbalance than $\ell_1$: in fact, any matrix can be balanced to constant $\ell_2$ error by only rescaling a vanishing $1/n$ fraction of the entries~\citep{OstRabYou17}, whereas this is impossible for the $\ell_1$ norm. 
(Note that this issue of which norm to measure imbalance should \emph{not} be confused with the $\ell_p$ Matrix Balancing problem, see Remark~\ref{rem:lp-bal}.)
\end{remark}

\subsection{Previous algorithms}\label{ssec:intro:prev}

The many applications of Matrix Balancing have motivated an extensive literature focused on solving it efficiently. 
However, there is still a large gap between theory and practice, and several key issues remain. We overview the relevant previous results below.

\subsubsection{Practical state-of-the-art}

Ever since its invention in 1960, \emph{Osborne's algorithm} has been the algorithm of choice for practitioners~\citep{Osborne60,ParRei69}. Osborne's algorithm is a simple iterative algorithm which initializes $D$ to the identity (i.e., no balancing), and then in each iteration performs an \emph{Osborne update} on some update coordinate $k \in [n]$, in which $D_{kk}$ is updated to $\sqrt{c_k(A)/r_k(A)} D_{kk}$ so that the $k$-th row sum $r_k(A)$ and $k$-th column sum $c_k(A)$ of the current balancing $A = DKD^{-1}$ agree.\footnote{We assume throughout that the diagonal of $K$ is zero. This ensures that the Osborne update makes the row and column sums agree. This assumption is without loss of generality because if $D$ $\ep$-balances $K$ with zeroed-out diagonal, then it also $\eps$-balances $K$.} A more precise statement is in Algorithm~\ref{alg:osb} later.

\par The classical version of Osborne's algorithm, henceforth called \emph{Round-Robin Cyclic Osborne}, chooses the update coordinates by repeatedly cycling through $\{1,\dots, n\}$.
This algorithm\footnote{To be precise, following~\citep{ParRei69}, some implementations have two minor modifications: a pre-processing step where $K$ is permuted to a triangular block matrix with irreducible diagonal blocks; and a restriction of the entries of $D$ to exact powers of the radix base. We presently ignore these minor modifications since the former is easily performed in linear-time~\citep{TarjanStronglyConnected}, and the latter is solely to safeguard against numerical precision issues in practice.} performs remarkably well in practice and is the implementation of choice in most linear algebra software packages.

\par Despite this widespread adoption of Osborne's algorithm, a theoretical understanding of its convergence has proven to be quite challenging: indeed, non-asymptotic convergence bounds (i.e., runtime bounds) were not known for nearly $60$ years until the breakthrough $2017$ paper~\citep{OstRabYou17}. The paper~\citep{OstRabYou17} shows\footnote{Note that in~\citep{OstRabYou17}, bounds are written for the $\ell_2$ balancing criteria; see the discussion after~\eqref{eq-def:balance-approx}.\label{ft:ost}} that Round-Robin Cyclic Osborne computes an $\eps$-balancing after $O(m n^2 \eps^{-2} \log \condK)$ arithmetic operations, where $m$ is the number of nonzeros in $K$, and $\condK := (\sum_{ij} K_{ij})/(\min_{ij : K_{ij} \neq 0} K_{ij})$. They also show faster $\tilde{O}(n^2 \eps^{-2} \log \condK)$  runtimes for two variants of Osborne's algorithm which choose update coordinates in different orders than cyclically. Here and henceforth, the $\Otilde$ notation suppresses polylogarithmic factors in $n$ and $\eps^{-1}$. The first variant, which we call \emph{Greedy Osborne}, chooses the coordinate with maximal imbalance as measured by $\argmax_k (\sqrt{r_k(A)} - \sqrt{c_k(A)})^2$. They show that Greedy Osborne's runtime dependence on $\eps$ can be improved from $\eps^{-2}$ to $\eps^{-1}$; however, this comes at the high cost of an extra factor of $n$. A disadvantage of Greedy Osborne is that it has numerical precision issues and requires operating on $O(n \log \condK)$-bit numbers.
The second variant, which we call \emph{Weighted Random Osborne}, chooses coordinate $k$ with probability proportional to $r_k(A) + c_k(A)$, and can be implemented using $O(\log(n\kappa/\eps))$-bit numbers.

\par Collectively, these runtime bounds are fundamental results since they establish that Osborne's algorithm has polynomial runtime in $n$ and $\eps^{-1}$, and moreover that variants of it converge in roughly $\Otilde(n^2\eps^{-2})$ time for matrices satisfying $\log \condK = \Otilde(1)$---henceforth called \emph{well-conditioned matrices}. However, these theoretical runtime bounds are still much slower than both Osborne's rapid empirical convergence and the state-of-the-art theoretical algorithms described below. 
\par Two remaining open questions that this paper seeks to address are: 
\begin{enumerate}
	\item \textbf{Near-linear runtime\footnote{Throughout, we say a runtime is near-linear if it is $O(m)$, up to polylogarithmic factors in $n$ and polynomial factors in the inverse accuracy $\eps^{-1}$.}.}
	 Does (any variant of) Osborne's algorithm have near-linear runtime in the input sparsity $m$? The fastest known runtimes scale as $n^2$, which is significantly slower for sparse problems.
	\item \textbf{Scalability in accuracy.} The fastest runtimes for (any variant of) Osborne's algorithm scale poorly in the accuracy as $\eps^{-2}$. (Except Greedy Osborne, for which it is only known that $\eps^{-2}$ can be replaced by $\eps^{-1}$ at the high cost of an extra factor of $n$.) Can this be improved?
\end{enumerate}

\subsubsection{Theoretical state-of-the-art}
A separate line of work leverages sophisticated optimization techniques to solve a convex optimization problem equivalent to Matrix Balancing. These algorithms have $\log \eps^{-1}$ dependence on the accuracy, but are not practical (at least currently) due to costly overheads required by their significantly more complicated iterations. This direction originated in~\citep{KalKhaSho97}, which showed that the Ellipsoid algorithm produces an approximate balancing in $\tilde{O}(n^4 \log( (\log \condK) / \eps))$ arithmetic operations on $O(\log(n\condK/\eps))$-bit numbers.
Recently,~\citep{CohMadTsiVla17}\footnote{Similar runtimes were also developed by~\citep{ZhuLiOliWig17}.} gave an Interior Point algorithm with runtime $\Otilde(m^{3/2}\log(\condK/\eps))$ and a Newton-type algorithm with runtime 
$\tilde{O}(m \diamG \log^2 (\kappa /\eps) \log \kappa)$,
where $d$ denotes the diameter of the directed graph $G_K$ with vertices $[n]$ and edges $\{(i,j) : K_{ij} > 0\}$~\citep[Theorem 4.18, Theorem 6.1, and Lemma 4.24]{CohMadTsiVla17}.
Note that under the condition that $K$ is a \emph{well-connected matrix}---by which we mean that $G_K$ has polylogarithmic diameter $d = \Otilde(1)$---then this latter algorithm has near-linear runtime in the input sparsity $m$. However, these algorithms heavily rely upon near-linear time Laplacian solvers, for which practical implementations are not known.

\subsection{Contributions}\label{ssec:intro:contributions}

\paragraph*{Random Osborne converges in near-linear time.}
Our main result (Theorem~\ref{thm:osb-rand}) addresses the two open questions above by showing that a simple random variant of the ubiquitously used Osborne's algorithm has runtime that is (i) near-linear in the input sparsity $m$, and also (ii) linear in the inverse accuracy $\eps^{-1}$ for well-connected inputs. Property (i) amends the aforementioned gap between theory and practice that the fastest known runtime of Osborne's algorithm scales as $n^2$~\citep{OstRabYou17}, while a different, impractical algorithm has theoretical runtime which is (conditionally) near-linear in $m$~\citep{CohMadTsiVla17}. Property (ii) shows that improving the runtime dependence in $\eps$ from $\eps^{-2}$ to $\eps^{-1}$ does not require paying a costly factor of $n$ (c.f.,~\citep{OstRabYou17}).

\par Specifically, we propose a variant of Osborne's algorithm---henceforth called \emph{Random Osborne}\footnote{
	Not to be confused with the different randomized variant of Osborne's algorithm in~\citep[\S5]{OstRabYou17}, which draws coordinates with non-uniform probabilities. We call that algorithm Weighted Random Osborne to avoid confusion.
}--- which chooses update coordinates uniformly at random, and show the following.

\begin{theorem}[Informal version of Theorem~\ref{thm:osb-rand}]\label{thm:intro:rand}
	Random Osborne solves the approximate Matrix Balancing problem on input $K \in \Rpnn$ to accuracy $\eps > 0$ after
	\begin{align}
	O\left(\frac{m}{\eps} \left(\frac{1}{\eps} \wedge \diamG\right) \log \condK \right),
	\label{eq-thm:intro:rand}
	\end{align}
	arithmetic operations, both in expectation and with high probability.
\end{theorem}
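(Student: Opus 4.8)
The plan is to adopt the classical convex-optimization view of Matrix Balancing. Writing a candidate balancing as $A = A(x) := \diag(e^x) K \diag(e^{-x})$ for $x \in \Rn$, the function $f(x) := \sum_{ij} K_{ij} e^{x_i - x_j} = \bone^T A(x) \bone$ is convex --- indeed log-convex, being a sum of exponentials of linear forms --- its partial derivative in coordinate $k$ equals the $k$-th imbalance $r_k(A)-c_k(A)$, and the $\eps$-balancing criterion~\eqref{eq-def:balance-approx} is exactly $\|\nabla f(x)\|_1 \le \eps f(x)$. An Osborne update on coordinate $k$ is precisely the exact coordinate-minimization step for $f$: restricting $f$ to $x_k$ gives a function of the form $\alpha e^{x_k} + \beta e^{-x_k} + \mathrm{const}$ with $\alpha e^{x_k} = r_k(A)$ and $\beta e^{-x_k} = c_k(A)$, so the update moves $x_k$ to $\tfrac12\log(\beta/\alpha)$ and decreases $f$ by \emph{exactly} $(\sqrt{r_k(A)}-\sqrt{c_k(A)})^2$. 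Normalizing by $f(x)$ and using $-\log(1-u)\ge u$, the key progress inequality I would use is that one Osborne update on coordinate $k$ decreases $\log f$ by at least $(\sqrt{p_k}-\sqrt{q_k})^2$, where $p := r(A)/f(x)$ and $q := c(A)/f(x)$ are probability vectors. Hence for Random Osborne the expected one-step decrease of $\log f$ is at least $\tfrac1n\sum_k(\sqrt{p_k}-\sqrt{q_k})^2 = \tfrac2n \Hell^2(p,q)$, tying per-iteration progress to the current imbalance measured in squared Hellinger distance. This is where log-convexity is exploited: passing to $\log f$ (rather than $f$ as in~\citep{OstRabYou17}) makes the right-hand side scale-free.

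Next I would run a potential/supermartingale argument on $\log f$. On one side, $\log f$ is nonincreasing along the algorithm and bounded below by $\log f^{\star}$, where $f^{\star}:=\min_x f(x)$ is finite by irreducibility; an AM--GM-along-a-cycle argument (telescoping the $x$-factors around a cycle of $G_K$) gives $f^{\star}\ge \min_{ij:\,K_{ij}\neq 0}K_{ij}$, and since $f(x^{(0)})=\sum_{ij}K_{ij}$ the \emph{total} decrease of $\log f$ over the whole run is at most $\log\condK$. On the other side, as long as the current iterate is not $\eps$-balanced we have $\|p-q\|_1>\eps$, and the elementary bound $\Hell^2(p,q)\ge \tfrac18\|p-q\|_1^2$ (total variation versus Hellinger) forces an expected one-step decrease of at least $\eps^2/(4n)$. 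Letting $T$ be the first $\eps$-balanced iteration and applying the optional stopping theorem to the supermartingale $\log f^{(t\wedge T)} + (t\wedge T)\,\eps^2/(4n)$ gives $\E[T]=O(n\eps^{-2}\log\condK)$. For the arithmetic cost I would maintain the current row sums, column sums, total mass $f$, and imbalance $\|r-c\|_1$; an Osborne update on $k$ rescales only row $k$ and column $k$ of $A$, so all of these (and the stopping test) update in $O(\deg_k)$ time, which is $O(m/n)$ in expectation over the uniformly random $k$ (and $m\ge n$ by irreducibility). This yields expected cost $O(m\eps^{-2}\log\condK)$, the $\tfrac1\eps\wedge \diamG = \tfrac1\eps$ case. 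The high-probability statement follows by a martingale concentration bound (Freedman) on the same supermartingale --- or more crudely by a Markov-and-restart argument, using that from \emph{every} reachable iterate the remaining expected runtime is still $O(n\eps^{-2}\log\condK)$ since $f$ only decreases --- together with a Chernoff bound on $\sum_{t<T}\deg_{k_t}$ to control the total work.

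The remaining, and most delicate, ingredient is the improvement from $\eps^{-2}$ to $\eps^{-1}$ for well-connected $K$. The bottleneck above is the quadratic loss in $\Hell^2 \ge \tfrac18\|p-q\|_1^2$, which is tight for arbitrary distributions. The plan is to prove a sharper, structure-aware lower bound: when $p,q$ are the normalized row/column sums of a scaling $A(x)$ of an irreducible $K$ whose pattern graph $G_K$ has diameter $\diamG$, and $x$ lies in the initial sublevel set of $f$, then in fact $\Hell^2(p,q) = \Omega(\|p-q\|_1/\diamG)$ --- linear, not quadratic, in the imbalance. The mechanism is that an $\ell_1$ imbalance of size $\Theta(\delta)$ forces $\Theta(\delta)$ units of net flow to be routed through $G_K$, and since $G_K$ has diameter $\diamG$ this flow --- combined with sublevel-set control on the spread of $x$ (along any edge $(i,j)$ of $G_K$ one has $x_i - x_j \le \log\condK$, hence by strong connectivity $\max_i x_i - \min_i x_i \le \diamG\log\condK$) --- certifies a comparably large gap between $\sqrt{p_k}$ and $\sqrt{q_k}$ on a constant fraction of the routed mass; one natural route is to compare $A(x)$ against a balanced scaling $A(x^{\star})$ and invoke convexity of $f$ on the sublevel set. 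Granting such a lemma, the expected one-step decrease of $\log f$ is $\Omega(\eps/(n\diamG))$ whenever the iterate is not $\eps$-balanced, so the same optional-stopping argument (for the \emph{same} algorithm) gives $\E[T]=O(n\diamG\,\eps^{-1}\log\condK)$ iterations and $O(m\diamG\,\eps^{-1}\log\condK)$ arithmetic operations; taking the better of the two analyses yields the claimed $O\!\big(\tfrac m\eps(\tfrac1\eps\wedge\diamG)\log\condK\big)$ bound. I expect this structural Hellinger-versus-$\ell_1$ estimate to be the main obstacle --- both in pinning down the correct dependence on $\diamG$ (a naive ``$f-f^{\star}\le \diamG\log\condK\cdot\|r-c\|_1$'' argument produces a spurious $\diamG^2$) and in making the flow-routing argument quantitatively tight.
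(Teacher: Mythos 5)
Your first two paragraphs track the paper's own argument for the $\eps^{-2}$ part essentially verbatim: the potential $\Phi=\log f$, the exact per-update decrease $(\sqrt{r_k(A)}-\sqrt{c_k(A)})^2$, the inequality $-\log(1-u)\ge u$, the expected one-step decrease $\tfrac2n\Hell^2(r(P),c(P))$, the initial gap bound $\log\condK$ via a cycle argument, the Hellinger-versus-$\ell_1$ bound, optional stopping for $\E[\tau]$, the $O(m/n)$ expected per-iteration cost, and Chernoff for the high-probability statement (the only technical point you gloss over is that $\tau$ and the per-iteration costs are dependent, so the expectation bound needs a Wald-type variant, cf.\ Lemma~\ref{lem:wald}). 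However, the theorem claims the \emph{minimum} $\tfrac1\eps\wedge\diamG$, so the third paragraph is essential, and there the proposal has a genuine gap: the structural lemma you plan to prove, $\Hell^2\big(r(P),c(P)\big)=\Omega\big(\|r(P)-c(P)\|_1/\diamG\big)$ on the initial sublevel set, is false. Take $x=x^*+tv$ for small $t$ and any fixed direction $v$: such $x$ lie in the sublevel set, the imbalance satisfies $\|r(P)-c(P)\|_1=\Theta(t)$, but $\Hell^2(r(P),c(P))=\Theta(t^2)$, since near balancedness Hellinger-squared is quadratic in the perturbation. Already for a strictly positive $K$ (so $\diamG=1$) your claim would read $\Hell^2\gtrsim\|p-q\|_1$ uniformly, which fails for every nearly balanced scaling; and such iterates are exactly the late-stage iterates with imbalance just above $\eps$, i.e., the claimed uniform expected decrease $\Omega\big(\eps/(n\diamG)\big)$ fails precisely in the regime $\eps\diamG\ll1$ that the improvement targets (the true decrease there is $\Theta(\eps^2/n)$).

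What the paper does instead is the inequality you explicitly discarded. It keeps the quadratic bound $\Hell^2\ge\tfrac18\|r(P)-c(P)\|_1^2$ but adds a second lower bound on the imbalance, $\|r(P)-c(P)\|_1=\|\nabla\Phi(x)\|_1\ge\big(\Phi(x)-\Phi^*\big)/\big(\diamG\log\condK\big)$, obtained from convexity of $\Phi$, H\"older, and the sublevel-set bound $\|x-x^*\|_\infty\le\diamG\log\condK$ (Corollary~\ref{cor:bal:R}); this gives the per-iteration decrease $\tfrac1{4n}\big(\tfrac{\Phi(x)-\Phi^*}{\diamG\log\condK}\vee\eps\big)^2$ of Proposition~\ref{prop:hell-lb} and Lemma~\ref{lem:pot-dec:rand}. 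The ``spurious $\diamG^2$'' you feared cancels because the analysis is phased: while the gap exceeds $\alpha:=\eps\,\diamG\log\condK$, halving subphases cost in total $O\big(n(\diamG\log\condK)^2/\alpha\big)=O\big(n\diamG\eps^{-1}\log\condK\big)$ iterations, and once the gap is below $\alpha$ the additive rate $\eps^2/(4n)$ needs only $\alpha/(\eps^2/4n)=O\big(n\diamG\eps^{-1}\log\condK\big)$ more, giving $O\big(n\diamG\eps^{-1}\log\condK\big)$ iterations and hence $O\big(m\diamG\eps^{-1}\log\condK\big)$ expected work (with the same optional-stopping/Chernoff machinery applied phase by phase). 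So the missing idea is not a linear Hellinger-versus-$\ell_1$ estimate---no such estimate is available---but a potential-gap-dependent lower bound on the imbalance combined with a two-phase, geometrically halving potential argument.
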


\begin{table}
	\centering
	\begin{tabular}{|c|c|c|}
		\hline
		\textbf{Variant} & \textbf{Best runtime bound (arithmetic operations)} & \textbf{Polylog bits} \\ [0.5ex]  \hline \hline
		Cyclic (Round-Robin)
		&
		$\Otilde(mn^2/\eps^2)$~\citep{OstRabYou17} 
		&
		No
		\\ \hline
		Cyclic (Random-Reshuffle)
		&
		$\boldsymbol{\Otilde(m n / \eps)}$ \textbf{(Theorem~\ref{thm:osb-cyc})}
		&
		\textbf{Yes (Theorem~\ref{thm:osb-all:bits})}
		\\ \hline
		Weighted Random
		&
		$\Otilde(n^2/\eps^{2})$~\citep{OstRabYou17}
		& 
		Yes~\citep{OstRabYou17}
		\\ \hline
		Greedy
		&
		$\Otilde((n^2/\eps^2) \wedge (n^3/\eps))$~\citep{OstRabYou17} $\longrightarrow$
		$\boldsymbol{\Otilde(n^2 / \eps)}$ \textbf{(Theorem~\ref{thm:osb-greedy})}
		&
		No
		\\ \hline
		Random
		& 
		$\boldsymbol{\Otilde(m /\eps)}$ \textbf{(Theorem~\ref{thm:osb-rand})}
		&
		\textbf{Yes (Theorem~\ref{thm:osb-all:bits})}
		\\ \hline
	\end{tabular}
	\caption{
		Variants of Osborne's algorithm for balancing a matrix $K \in \Rpnn$ with $m$ nonzeros to $\eps$ $\ell_1$ accuracy. For simplicity, here $K$ is assumed well-conditioned (i.e., $\log \condK = \Otilde(1)$) and well-connected (i.e., $\diamG = \Otilde(1))$; see the main text for detailed dependence on $\log \condK$ and $\diamG$. 
		Note that in~\citep{OstRabYou17}, bounds are written for the $\ell_2$ criterion; see the discussion after~\eqref{eq-def:balance-approx}.
		See the main text for descriptions of each variant, and also \S\ref{ssec:prelim:cd} for more details on Random-Reshuffle Cyclic, Greedy, and Random Osborne.
		Our new bounds are in bold.
	}
	\label{tab:bal}
\end{table}

\par We make several remarks about Theorem~\ref{thm:intro:rand}. First, we interpret the runtime~\eqref{eq-thm:intro:rand}. This is the minimum of $O(m\eps^{-2} \log \condK)$ and $O(m \diamG \eps^{-1} \log \condK)$. The former is near-linear in $m$. The latter is too if $G_K$ has polylogarithmic diameter $\diamG = \Otilde(1)$---important special cases include matrices $K$ containing at least one strictly positive row/column pair (there, $\diamG = 1$), and matrices with random sparsity patterns (there, $\diamG = \Otilde(1)$ with high probability, see, e.g.,~\citep[Theorem 10.10]{Bol01}). Note that the complexity of Matrix Balancing is intimately related to the connectivity of $G_K$: indeed, $K$ can be balanced if and only if $G_K$ is strongly connected (i.e., if and only if $\diamG$ is finite)~\citep{Osborne60}. Intuitively, the runtime dependence on $\diamG$ is a quantitative measure of ``how balanceable'' the input $K$ is. 
\par We note that the high probability bound in Theorem~\ref{thm:intro:rand} has tails that decay exponentially fast. This is optimal with our analysis, see Remark~\ref{rem:prob-subexp}.

\par Next, we comment on the $\log \condK$ term in the runtime. This term appears in all other state-of-the-art runtimes~\citep{CohMadTsiVla17,OstRabYou17} and is mild: indeed, $\log \condK \leq \log m + \log (\max_{ij} K_{ij}/ \min_{ij : K_{ij} > 0} K_{ij})$, where the former summand is $\tilde{O}(1)$---hence why the runtime is \textit{near}-linear---and the latter is the input size for the entries of $K$. In particular, if $K$ has quasi-polynomially bounded entries, then $\log \condK = \Otilde(1)$. 

\par Next, we compare to existing runtimes. Theorem~\ref{thm:intro:rand} gives a faster runtime than any existing practical algorithm, see Table~\ref{tab:bal}. If comparing to the (impractical) algorithm of~\citep{CohMadTsiVla17} on a purely theoretical plane, neither runtime dominates the other, and which is faster depends on the precise parameter regime:~\citep{CohMadTsiVla17} is better for high accuracy solutions\footnote{We remark that in practical applications of Matrix Balancing such as pre-conditioning, low accuracy solutions typically suffice. Indeed, this is a motivation of the commonly used variant of Osborne's algorithm which restricts entries of the scaling $D$ to exact powers of the radix base~\citep{ParRei69}.}, while Random Osborne has better dependence on the conditioning $\condK$ of $K$ and the connectivity $\diamG$ of $G_K$. 

Finally, we remark about bit-complexity. In \S\ref{sec:bits}, we show that with only minor modification, Random Osborne is implementable using numbers with only logarithmically few $O(\log (n \condK / \eps))$ bits; see Theorem~\ref{thm:osb-all:bits} for formal statement.

\paragraph*{Simple, streamlined analysis for different Osborne variants.} We prove Theorem~\ref{thm:intro:rand} using an intuitive potential argument (overviewed in \S\ref{ssec:intro:overview} below). An attractive feature of this argument is that with only minor modification, it adapts to other Osborne variants. We elaborate below; see also Tables~\ref{tab:bal} and~\ref{tab:par} for summaries of our improved rates.

\par \underline{Greedy Osborne.} We show an improved runtime for Greedy Osborne where the $\eps^{-2}$ dependence is improved to $\eps^{-1}$ at the cost of $\diamG$ (rather than a full factor of $n$ as in~\citep{OstRabYou17}). Specifically, in Theorem~\ref{thm:osb-greedy}, we show convergence after $O(n^2\eps^{-1} (\eps^{-1} \wedge \diamG) \log \condK)$ arithmetic operations, which improves upon the previous best $O(n^2\eps^{-1} \log n \cdot (\eps^{-1} \log \condK \wedge n \log (\kappa/\eps)))$ from~\citep{OstRabYou17}. (The other improved $\log n$ factor comes from simplifying the data structure used for efficient greedy updates, see Remark~\ref{rem:greedy-amortized}.)

\par \underline{Random-Reshuffle Cyclic Osborne.} 
We analyze Random-Reshuffle Cyclic Osborne, which is the variant of Osborne's algorithm that cycles through all $n$ indices using a fresh random permutation in each cycle. 
We show that this algorithm converges after $O(m n \eps^{-1} (\eps^{-1} \wedge \diamG) \log \condK)$ arithmetic operations (Theorem~\ref{thm:osb-cyc}). Previously, the only known runtime bound for any variant of Osborne with ``cyclic'' updates in the sense that each index is updated exactly once per epoch, was the $O(mn^2\eps^{-2} \log \condK)$ runtime bound for Round-Robin Cyclic Osborne~\citep{OstRabYou17}. Although the version of Cyclic Osborne we study is different than the one studied in~\citep{OstRabYou17}, we note that our runtime bound is a factor of $n$ faster, and additionally a factor of $1/\eps$ faster if the matrix is well-connected.
Moreover, 
we show that Random-Reshuffle Cyclic Osborne can be implemented on numbers with $O(\log(n\kappa/\eps))$-bit numbers (Theorem~\ref{thm:osb-all:bits}), whereas the analysis of Round-Robin Cyclic Osborne in~\citep{OstRabYou17} requires $O(n \log \kappa)$-bit numbers.

\par \underline{Parallelized Osborne.} 
We also show fast convergence for the analogous greedy, cyclic, and random variants of a parallelized version of Osborne's algorithm that is recalled in \S\ref{ssec:prelim:par}. These runtimes bounds are summarized in Table~\ref{tab:par}. 
Our main result here is that---modulo at most a single $\log n$ factor arising from the conditioning $\log \condK$ of the input---Random Block Osborne converges after (i) only a linear number $O(\tfrac{p}{\eps}(\tfrac{1}{\eps} \wedge \diamG) \log \condK)$ of synchronization rounds in the size $p$ of the dataset partition; and (ii) the same amount of total work as its non-parallelized counterpart Random Osborne, which is in particular near-linear in $m$ (see Theorem~\ref{thm:intro:rand} and the ensuing discussion). Property (i) shows that, when giving an optimal coloring of $G_K$, Random Osborne converges in linear time in the chromatic number $\chi(G_K)$ of $G_K$ (see \S\ref{ssec:prelim:par} for further details). Property (ii) shows that the speedup of parallelization comes at no cost in the total work.

\begin{table}
	\centering
	\begin{tabular}{|c|c|c|c|}
		\hline
		\textbf{Variant} & \textbf{Best runtime bound (rounds)} & \textbf{Total work} & \textbf{Polylog bits} \\ [0.5ex]  \hline \hline
		Cyclic Block (Random-Reshuffle)
		&
		$\Otilde(p^2/\eps)$
		&
		$\Otilde(m p/\eps)$
		&
		Yes 
		\\ \hline
		Greedy Block
		&
		$\Otilde(p/\eps)$
		&
		$\Otilde(mp/\eps)$
		&
		No
		
		\\ \hline
		Random Block
		&
		$\Otilde(p/\eps)$
		&
		$\Otilde(m/\eps)$
		&
		Yes 
		\\ \hline
	\end{tabular}
	\caption{
		Parallelized variants of Osborne's algorithm for balancing a matrix $K \in \Rpnn$ with $m$ nonzeros to $\eps$ $\ell_1$ accuracy, given a partitioning of the dataset into $p$ blocks (see \S\ref{ssec:prelim:par} for details). For simplicity, here $K$ is assumed well-conditioned (i.e., $\log \condK = \Otilde(1)$) and well-connected (i.e., $\diamG = \Otilde(1))$; see the main text for detailed dependence on $\log \condK$ and $\diamG$. 
		All results are ours. The runtime and work bounds are in Theorem~\ref{thm:par:all}, and the bit-complexity bounds are in Theorem~\ref{thm:osb-all:bits}. 
	}
	\label{tab:par}
\end{table}

\subsection{Overview of approach}\label{ssec:intro:overview}

We establish all of our runtime bounds with essentially the same potential argument. Below, we first sketch this argument for Greedy Osborne, since it is the simplest. Next, we describe the modifications for Random Osborne---the argument is identical modulo probabilistic tools which, albeit necessary for a rigorous analysis, are not the heart of the argument. We then outline the analysis for Random-Reshuffle Cyclic Osborne, which follows as a straightforward corollary.
We then briefly remark upon the very minor modifications required for the parallelized Osborne variants.

\par For all variants, the potential we use is $D \mapsto \Phi(D) - \inf_{D^*} \Phi(D^*)$, where for a positive diagonal matrix $D$, we write $\Phi(D) = \log \sum_{ij} A_{ij}$ to denote the logarithm of the sum of the entries of the current balancing $A = DKD^{-1}$. Minimizing this potential function is well-known to be equivalent to Matrix Balancing; details in the Preliminaries section \S\ref{ssec:prelim:convex}. Note also that Osborne's algorithm is equivalent to Exact Coordinate Descent on this function---which, importantly, is convex after a re-parameterization; see \S\ref{ssec:prelim:cd}. In the interest of accessibility, the below overview describes our approach at an informal level that does not require further background. Later, \S\ref{sec:prelim} provides these preliminaries, and \S\ref{sec:pot} gives the technical details of the potential argument.

\subsubsection{Argument for Greedy Osborne}\label{sssec:intro:overview-greedy}
Here we sketch the $O(n^2 \eps^{-1} (\eps^{-1} \wedge \diamG) \log \condK)$ runtime we establish for Greedy Osborne in \S\ref{sec:greedy}. Since each Greedy Osborne iteration takes $O(n)$ arithmetic operations (see \S\ref{ssec:prelim:cd}), it suffices to bound the number of iterations by $O(n \eps^{-1} (\eps^{-1} \wedge \diamG) \log \condK)$.
\par The first step is relating the per-iteration progress of Osborne's algorithm to the imbalance of the current balancing---as measured in \emph{Hellinger distance} $\Hell(\cdot,\cdot)$. Specifically, we show that an Osborne update decreases the potential function by at least
\begin{align}
(\text{per-iteration decrease in potential})
\gtrsim
\frac{\Hell^2 \left( r(P), c(P) \right)}{n},
\label{eq-ove:hell}
\end{align}
where $P = A/\sum_{ij} A_{ij}$ is the normalization of the current scaling $A = DKD^{-1}$. Note that since $P$ is normalized, its marginals $r(P)$ and $c(P)$ are both probability distributions.
\par The second step is lower bounding this Hellinger imbalance $\Hell^2 \left( r(P), c(P) \right)$ by something large, so that we can argue that each iteration makes significant progress. Following is a simple such lower bound that yields an $O(n^2 \eps^{-2} \log \condK)$ runtime bound. Modulo small constant factors: a standard inequality in statistics lower bounds Hellinger distance by $\ell_1$ distance (a.k.a. total variation distance), and the $\ell_1$ distance is by definition at least $\eps$ if the current iterate is not $\eps$-balanced (see~\eqref{eq-def:balance-approx}). Therefore 
\begin{align}
(\text{per-iteration decrease in potential})
\gtrsim
\frac{\eps^2}{n}
\label{eq-ove:1}
\end{align}
for each iteration before convergence. Since the potential is initially not very large (at most $\log \condK$, see Lemma~\ref{lem:pot-init}) and by construction always nonnegative, the total number of iterations before convergence is therefore at most $n \eps^{-2} \log \condK$.

\par The key to the improved bound is an extra inequality that shows that \emph{the per-iteration decrease is very large when the potential is large}. Specifically, this inequality---which has a simple proof using convexity of the potential---implies the following improvement of~\eqref{eq-ove:1}
\begin{align}
(\text{per-iteration decrease in potential})
\gtrsim
\frac{1}{n} \left[\frac{\text{(current potential)}}{R} \vee \eps \right]^2
\label{eq-ove:2}
\end{align}
where $R = \diamG \log \condK$. The per-iteration decrease is thus governed by the maximum of these two quantities. In words, the former ensures a \emph{relative improvement} in the potential, and the latter ensures an \emph{additive improvement}. Which is bigger depends on the current potential: the former dominates when the potential is $\Omega(\eps R)$, and the latter for $O(\eps R)$. 
It can be shown that both ``phases'' require $O(n \eps^{-1} \diamG \log \condK)$ iterations, yielding the desired improved rate (details in \S\ref{sec:greedy}).

\subsubsection{Argument for Random Osborne}\label{sssec:intro:overview-rand}

The argument for Random Osborne is nearly identical, except for two minor changes. The first change is the per-iteration potential decrease. All the same bounds hold (i.e.,~\eqref{eq-ove:hell},~\eqref{eq-ove:1}, and~\eqref{eq-ove:2}), except that they are now \emph{in expectation} rather than deterministic. Nevertheless, this large expected progress is sufficient to obtain the same iteration-complexity bound. Specifically, an expected bound on the number of iterations is proved using Doob's Optional Stopping Theorem, and a h.p. bound using a martingale Chernoff bound (details in \S\ref{sssec:conv:runtime}).
\par The second change is the per-iteration runtime: it is faster in expectation.

\begin{obs}[Per-iteration runtime of Random Osborne]\label{obs:osb-rand:iter}
	An iteration of Random Osborne requires $O(m/n)$ arithmetic operations in expectation.
\end{obs}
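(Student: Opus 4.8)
The plan is to account for the cost of a single Random Osborne iteration by separating the work into two parts: reading/updating the relevant row and column of the current balancing $A = DKD^{-1}$, and maintaining the global bookkeeping quantities (the row sums $r(A)$, column sums $c(A)$, and the total mass $\sum_{ij} A_{ij}$) needed both to perform the update and to check the $\eps$-balanced stopping criterion. The key point is that when the update coordinate $k \in [n]$ is chosen uniformly at random, the number of nonzeros touched is exactly the number of nonzeros in row $k$ plus the number of nonzeros in column $k$ of $K$, and in expectation over $k$ this is $\tfrac{1}{n}\sum_{k}(\nnz(\text{row }k) + \nnz(\text{col }k)) = \tfrac{2m}{n} = O(m/n)$.

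First I would describe the data structures maintained across iterations: store $K$ in a sparse format giving $O(1)$-time access to the nonzero entries of each row and each column (e.g., adjacency lists indexed both ways), and maintain the current diagonal scaling as a vector $(D_{11},\dots,D_{nn})$, together with cached vectors for the current row sums $r(A)$ and column sums $c(A)$ and the scalar $\sum_{ij}A_{ij}$. Then I would observe that an Osborne update on coordinate $k$ multiplies $D_{kk}$ by $\sqrt{c_k(A)/r_k(A)}$, which only rescales the entries $A_{kj}$ (by a common factor) and $A_{ik}$ (by the reciprocal factor) for $j,i$ with $K_{kj}\neq 0$, $K_{ik}\neq 0$; the entry $A_{kk}$ is unchanged. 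Consequently, computing the new $r_k(A), c_k(A)$ and the needed factor requires only reading row $k$ and column $k$ — $\nnz_{\text{row}}(k)+\nnz_{\text{col}}(k)$ operations — and each affected off-diagonal entry $A_{kj}$ (resp. $A_{ik}$) contributes an $O(1)$ additive correction to $c_j(A)$ (resp. $r_i(A)$) and to $\sum_{ij}A_{ij}$, again $\nnz_{\text{row}}(k)+\nnz_{\text{col}}(k)$ operations total. Hence the deterministic cost of the iteration given update coordinate $k$ is $O(\nnz_{\text{row}}(k)+\nnz_{\text{col}}(k))$.

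Next I would take the expectation over the uniformly random choice of $k$:
\begin{align}
\E_k\bigl[\nnz_{\text{row}}(k)+\nnz_{\text{col}}(k)\bigr]
= \frac{1}{n}\sum_{k=1}^n \bigl(\nnz_{\text{row}}(k)+\nnz_{\text{col}}(k)\bigr)
= \frac{2\,\nnz(K)}{n}
= \frac{2m}{n},
\end{align}
so the expected per-iteration cost is $O(m/n)$. I would also note that checking whether the current iterate is $\eps$-balanced does not cost anything extra per iteration if one maintains the aggregate imbalance $\|r(A)-c(A)\|_1$ incrementally: the same set of coordinates whose row/column sums change are exactly the ones whose contribution to this $\ell_1$ quantity must be updated, again at $O(1)$ cost each, so it folds into the same $O(\nnz_{\text{row}}(k)+\nnz_{\text{col}}(k))$ budget.

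There is no real obstacle here — the statement is essentially a bookkeeping observation — but the one point that warrants care is making sure that \emph{all} quantities the algorithm consults each iteration (the update factor, the maintained marginals, the total mass, and the stopping test) can be refreshed by touching only the nonzeros in row $k$ and column $k$, rather than, say, recomputing a sum over all of $[n]$. I would therefore be explicit that every global scalar/vector is stored and updated incrementally, so that the per-iteration work is genuinely local to coordinate $k$; granting that, the expected $O(m/n)$ bound follows immediately from linearity of expectation and $\sum_k \nnz_{\text{row}}(k) = \sum_k \nnz_{\text{col}}(k) = m$.
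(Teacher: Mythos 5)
Your proposal is correct and follows the same argument as the paper: the cost of an Osborne update on coordinate $k$ is proportional to the number of nonzeros in row $k$ and column $k$ of $K$, and taking the expectation over the uniform choice of $k$ gives $2m/n = O(m/n)$. The paper states this in two sentences; your additional detail on sparse storage and incremental maintenance of the marginals and the stopping criterion is just a more explicit account of the same bookkeeping.
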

\begin{proof}
	The number of arithmetic operations required by an Osborne update on coordinate $k$ is proportional to the number of nonzero entries on the $k$-th row and column of $K$. Since Random Osborne draws $k$ uniformly from $[n]$, this number of nonzeros is $2m/n$ in expectation.
\end{proof}

Note that this per-iteration runtime is $n^2/m$ times faster than Greedy Osborne's. 
This is why our bound on the total runtime of Random Osborne is roughly $O(m)$, whereas for Greedy Osborne it is $O(n^2)$. 
\par A technical nuance is that arguing a final runtime bound from a per-iteration runtime and an iteration-complexity bound is a bit more involved for Random Osborne. This is essentially because the number of iterations is not statistically independent from the per-iteration runtimes. For Greedy Osborne, the final runtime is bounded simply by the product of the per-iteration runtime and the number of iterations. We show a similar bound for Random Osborne in expectation via a slight variant of Wald's inequality, and w.h.p. via a Chernoff bound; details in \S\ref{sssec:conv:decrease}.

\subsubsection{Argument for Random-Reshuffle Cyclic Osborne}\label{sssec:intro:overview-cyc}
Analyzing Cyclic Osborne (either Round-Robin or Random-Reshuffle) is difficult because the improvement of an Osborne update is significantly affected by the previous Osborne updates in the cycle---and this effect is difficult to track. We observe that our improved analysis for Random Osborne implies, as a straightforward corollary, a fast runtime for Random-Reshuffle Cyclic Osborne. Specifically, since Osborne updates monotonically improve the potential, the per-cycle improvement of Random-Reshuffle Cyclic Osborne is at least the improvement of the first iteration of the cycle, which equals the improvement of a single iteration of Random Osborne. This implies that Random-Reshuffle Cyclic Osborne requires at most $n$ times more iterations than Random Osborne. Details in \S\ref{sec:cyclic}. We remark that while arguing about a cycle only through its first iteration is clearly quite pessimistic, improvements seem difficult. A similar difficulty occurs for the analysis of Cyclic Coordinate Descent in more general convex optimization setups.

\subsubsection{Argument for parallelized Osborne}\label{sssec:intro:overview-block}

The argument for the parallelized variants of Osborne are nearly identical to the arguments for their non-parallelized counterparts, described above. Specifically, the main difference for the random and greedy variants is just that in the bounds~\eqref{eq-ove:hell},~\eqref{eq-ove:1}, and~\eqref{eq-ove:2}, the $1/n$ factor is improved to $1$ over the partitioning size $p$. The same argument then results in a final runtime that is sped up by this factor of $n/p$. The only difference for analyzing the Random-Reshuffle Cyclic variant is that here, the analogous coupling argument only gives a slowdown of $p$ rather than $n$. Details in \S\ref{sec:par}.

\subsubsection{Key differences from previous approaches}
	The only other polynomial-time analysis of Osborne's algorithm also uses a potential argument~\citep{OstRabYou17}. However, our argument differs in several key ways---which enables much tighter bounds as well as a simpler argument that extends to many variants of Osborne's algorithm. Notably, their proof of Lemma 3.1 (which is where they show that each iteration of Greedy Osborne makes progress; c.f. our Lemma~\ref{lem:pot-dec:greedy}) is specifically tailored to Greedy Osborne\footnote{
		Specifically, to prove their Lemma 3.1,~\citep{OstRabYou17} uses in (3.6) the inequality $\max_{i \in [n]} a_i / b_i \geq ( \tfrac{1}{n} \sum_{j=1}^n a_j) / (\tfrac{1}{n} \sum_{j=1}^n b_j)$ for positive $a_1, \dots, a_n, b_1, \dots, b_n$. Extending their analysis of Greedy Osborne to Random Osborne would require replacing $\max_{i \in [n]} a_i / b_i$ by $\tfrac{1}{n}\sum_{i=1}^n a_i / b_i$ in that inequality; however, this inequality is false because an average of ratios is in general incomparable to the ratio of averages. We bypass this obstacle by arguing in such a way that the quantity we need to bound is not a fraction, since such an analysis readily extends to Random Osborne by linearity of expectation (see~\eqref{eq-lpdg:3} and Lemma~\ref{lem:pot-dec:rand}).} and seems unextendable to other variants such as Random Osborne.
	In particular, this precludes obtaining the near-linear runtime shown in this paper.
	Another key difference is that they do \emph{not} use convexity of their potential (explicitly written on~\citep[page 157]{OstRabYou17}), whereas we exploit not only convexity but also \emph{log-convexity} (note our potential is the logarithm of theirs).
	Specifically, they use~\citep[Lemma 2.2]{OstRabYou17} to improve $\eps^{-2}$ to $\eps^{-1}$ dependence at the cost of an extra factor of $n$, whereas here we show a significantly tighter bound (see the proof of Proposition~\ref{prop:hell-lb}) that saves this factor of $n$ for well-connected graphs by exploiting log-convexity of their potential.

\subsection{Other related work}
We briefly remark about several tangentially related lines of work. Reference~\citep{Chen00} gives heuristics for speeding up Osborne's algorithm on sparse matrices in practice, but does not provide runtime bounds. Reference~\citep{OstRabYou18} gives a more complicated version of Osborne's algorithm that obtains a stricter approximate balancing in a polynomial (albeit less practical) runtime of roughly $\Otilde(n^{19} \eps^{-4} \log^4 \condK)$. Reference~\citep{MaiBat19} gives an asynchronous distributed version of Osborne's algorithm with applications to epidemic suppression.

\begin{remark}[Fast Coordinate Descent]
	Since Osborne's algorithm is Exact Coordinate Descent on a certain associated convex optimization problem (details in \S\ref{ssec:prelim:cd}), it is natural to ask what runtimes the extensive literature on Coordinate Descent implies for Matrix Balancing. However, applying general-purpose bounds on Coordinate Descent out-of-the-box gives quite pessimistic runtime bounds for Matrix Balancing\footnote{E.g., consider applying the state-of-the-art guarantees of~\citep{NesSti17,AllQuRicYua16} for accelerated Coordinate Descent algorithms (which, note also, do \emph{not} correspond exactly to Osborne's algorithm since they do not perform exact coordinate minimization). These bounds apply to Random Coordinate Descent with judiciously chosen non-uniform sampling probabilities, and yield an iteration bound of $(\sum_{i=i}^n \sqrt{L_i})\delta^{-1/2}\|x^*\|_2$ for minimizing $\Phi$ (defined in \S\ref{ssec:prelim:convex}) to $\delta$ additive accuracy, where $L_i$ is the smoothness of $\Phi$ on coordinate $i$. By~\citep[Corollary 2]{KalKhaSho97} and Cauchy-Schwarz, $\delta = O(\eps^2/n)$ ensures that such a $\delta$-approximate minimizer of $\Phi$ corresponds to an $\eps$-approximate balancing. Bounding $L_i \leq 1$ and $\|x^*\|_2 \leq \sqrt{n} \diamG \log \condK$ by Corollary~\ref{cor:bal:R} therefore yields a bound of $O(n^{2} \eps^{-1} d \log \condK)$ iterations. Since iterations takes $O(m/n)$ time on average, this yields a final runtime bound of $O(mn\eps^{-1} \diamG \log \condK)$, which is not near-linear.
	}, essentially because they only rely on coordinate-smoothness of the function. 
	In order to achieve the near-linear time bounds in this paper, we heavily exploit the further global structure of the specific convex optimization problem at hand.
\end{remark}

\begin{remark}[$\ell_p$ Matrix Balancing]\label{rem:lp-bal}
	The (approximate) $\ell_p$ Matrix Balancing problem is: given input $K \in \C^{n \times n}$ and $p \in [1,\infty)$, compute a scaling $A = DKD^{-1}$ such that for each $i \in [n]$, the $i$-th row and columnn of $A$ have (approximately) equal $\ell_p$ norm. (Note this $\ell_p$ variant should not be confused with the norm discussion following~\eqref{eq-def:balance-approx}.) Note that the Matrix Balancing problem studied in this paper is a special case of this: it is $\ell_1$ balancing a nonnegative matrix. However, it is actually no less general, since $\ell_p$ balancing $K \in \C^{n \times n}$ is trivially reducible to $\ell_1$ balancing the nonnegative matrix with entries $|K_{ij}|^p$, see, e.g.,~\citep{RotSchSch94}. Thus, following the literature, we focus only on the version of Matrix Balancing described above.
\end{remark}

\begin{remark}[Max-Balancing]
	The Max-Balancing problem is $\ell_p$ Matrix Balancing for $p = \infty$, i.e.: given $K \in \Rpnn$, compute a scaling $A = DKD^{-1}$ so that for each $i$, the maximum entry in the $i$-th row and column of $A$ are equal. There is an extensive literature on this problem, including polynomial-time combinatorial algorithms~\citep{SchSch91,YouTarOrl91} as well as a natural analog of Osborne's algorithm~\citep{ParRei69} from the 1960s. Just as for Matrix Balancing, Osborne's algorithm has long been the choice in practice for Max-Balancing, yet its analysis has proven quite difficult: asymptotic convergence was not even known until 1998~\citep{Chen98}, and the first runtime bound was shown only a few years ago~\citep{SchSin17}. 
	However, despite the syntactic similarity of Max-Balancing and Matrix Balancing, the two problems are fundamentally very different: not only are the balancing goals different (which begets remarkably different properties, e.g., the Max-Balancing solution is not unique~\citep{Chen98}), but also the algorithms are quite different (even the analogous versions of Osborne's algorithm) and their analyses do not appear to carry over~\citep{OstRabYou17}.
\end{remark}

\begin{remark}[Matrix Scaling and Sinkhorn's algorithm]\label{rem:scaling}
	The Matrix Scaling problem is: given $K \in \Rpnn$ and vectors $\mu,\nu \in \Rpn$ satisfying $\sum_{i} \mu_i = \sum_i \nu_i$, find positive diagonal matrices $D_1,D_2$ such that $A := D_1KD_2$ satisfies $r(A) = \mu$ and $c(A) = \nu$. The many applications of Matrix Scaling have motivated an extensive literature on it; see, e.g., the survey~\citep{Idel16}. In analog to Osborne's algorithm for Matrix Balancing, there is a simple iterative procedure (Sinkhorn's algorithm) for Matrix Scaling~\citep{Sin67}.
	Sinkhorn's algorithm was recently shown to converge in near-linear time~\citep{AltWeeRig17} (see also~\citep{GurYia98,ChaKha18,DvuGasKro18}). The analysis there also uses a potential argument. Interestingly, the per-iteration potential improvement for Matrix Scaling is the \emph{Kullback-Leibler divergence} of the current imbalance, whereas for Matrix Balancing it is the \emph{Hellinger divergence}. Further connections related to algorithmic techniques in this paper are deferred to Appendix~\ref{app:sink}.
\end{remark}

\subsection{Roadmap}\label{ssec:intro:outline}
\S\ref{sec:prelim} recalls preliminary background. \S\ref{sec:pot} establishes the key lemmas in the potential argument. \S\ref{sec:greedy}, \S\ref{sec:rand}, \S\ref{sec:cyclic}, and \S\ref{sec:par} use these tools to prove fast convergence for Greedy, Random, Random-Reshuffle Cyclic, and parallelized Osborne variants, respectively.
For simplicity of exposition, these sections assume exact arithmetic; bit-complexity issues are addressed in \S\ref{sec:bits}. \S\ref{sec:conc} concludes with several open questions.

\section{Preliminaries}\label{sec:prelim}

\subsection{Notation}\label{ssec:prelim:notation}

For the convenience of the reader, we collect here the notation used commonly throughout the paper. We reserve $K \in \Rpnn$ for the matrix we seek to balance, $\eps > 0$ for the balancing accuracy, $m$ for the number of nonzero entries in $K$, $G_K$ for the graph associated to $K$, and $\diamG$ for the diameter of $G_K$. 
We assume throughout that the diagonal of $K$ is zero; this is without loss of generality because if $D$ solves the $\eps$-balancing problem for the matrix $K$ with zeroed-out diagonal, then $D$ solves the $\eps$-balancing problem for $K$.
The support, maximum entry, minimum nonzero entry, and condition number of $K$ are respectively denoted by $\supp(K) = \{(i,j) : K_{ij} > 0\}$, $\Kmax = \max_{ij} K_{ij}$, $\Kmin = \min_{(i,j) \in \supp(K)} K_{ij}$, and $\condK = (\sum_{ij}  K_{ij})/\Kmin$. 
The $\Otilde$ notation suppresses polylogarithmic factors in $n$ and $\eps$.
The all-ones and all-zeros vectors in $\Rn$ are respectively denoted by $\bone$ and $\zero$. Let $v \in \Rn$. The $\ell_1$ norm, $\ell_{\infty}$ norm, and variation semi-norm of $v$ are respectively $\|v\|_1 = \sum_{i=1}^n |v_i|$, $\|v\|_{\infty} = \max_{i \in [n]} |v_i|$, and $\var{v} = \max_i v_i - \min_j v_j$. 
We denote the entrywise exponentiation of $v$ by $e^v \in \R^n$, and the diagonalization of $v$ by $\diag(v) \in \Rnn$.
The set of discrete probability distributions on $n$ atoms is identified with the simplex $\Delta_n = \{p \in \Rpn : \sum_{i=1}^n p_i = 1 \}$. Let $\mu,\nu \in \Delta_n$. Their Hellinger distance is $\Hell(\mu,\nu) =  
\sqrt{ \frac{1}{2} \sum_{\ell=1}^n (\sqrt{\mu_\ell} - \sqrt{\nu_\ell})^2 }$, and their total variation distance is $\TV(\mu,\nu) = \|\mu - \nu\|_1/2$. 
We abbreviate ``with high probability'' by w.h.p., ``high probability'' by h.p., and ``almost surely'' by a.s.
We denote the minimum of $a,b \in \R$ by $a \wedge b$, and the maximum by $a \vee b$. 
Logarithms take base $e$ unless otherwise specified. 
All other specific notation is introduced in the main text.

\subsection{Matrix Balancing}\label{ssec:prelim:basic}

The formal definition of the (approximate) Matrix Balancing problem is in the ``log domain'' (i.e., output $x \in \Rn$ rather than $\diag(e^x)$). This is in part to avoid bit-complexity issues (see \S\ref{sec:bits}).

\begin{defin}[Matrix Balancing]\label{def:bal}
	The \emph{Matrix Balancing problem} $\BALK$ for input $K \in \Rpnn$ is to compute a vector $x \in \Rn$ such that $\diag(e^x) K \diag(e^{-x})$ is balanced. 
\end{defin}

\begin{defin}[Approximate Matrix Balancing]\label{def:abal}
	The \emph{approximate Matrix Balancing problem} $\ABALKeps$ for inputs $K \in \Rpnn$ and $\eps > 0$ is to compute a vector $x \in \Rn$ such that $\diag(e^x) K \diag(e^{-x})$ is $\eps$-balanced (see~\eqref{eq-def:balance}).
\end{defin}

$K \in \Rpnn$ is said to be \emph{balanceable} if $\BALK$ has a solution. It is known that non-balanceable matrices can be approximately balanced to arbitrary precision (i.e., $\ABAL$ has a solution for every $K \in \Rpnn$ and $\eps > 0$), and moreover that this is efficiently reducible to approximately balancing balanceable matrices, see, e.g.,~\citep{Chen00,CohMadTsiVla17}.
Thus, following the literature, we assume throughout that $K$ is balanceable. In the sequel, we make use of the following classical characterization of balanceable matrices in terms of their sparsity patterns. 

\begin{lemma}[Characterization of balanceability]\label{lem:balanceable}
	$K \in \Rpnn$ is balanceable if and only if it is irreducible---i.e., if and only if $G_K$ is strongly connected~\citep{Osborne60}. %
\end{lemma}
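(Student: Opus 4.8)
\textbf{Proof proposal for Lemma~\ref{lem:balanceable} (characterization of balanceability).}

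The plan is to prove both directions via the variational/convex-optimization perspective on Matrix Balancing, which is the natural vehicle here and dovetails with how the rest of the paper reasons about the problem. Recall that $K$ is balanceable iff the potential $\Phi(x) = \log \sum_{ij} K_{ij} e^{x_i - x_j}$ (the log of the sum of entries of $\diag(e^x)K\diag(e^{-x})$, up to an additive constant) attains its infimum over $x \in \R^n$: indeed, at any minimizer the gradient vanishes, and the stationarity condition $\nabla \Phi(x) = 0$ is exactly $r(A) = c(A)$ for $A = \diag(e^x)K\diag(e^{-x})$. So the lemma reduces to: $\Phi$ attains its minimum iff $G_K$ is strongly connected. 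Note also that $\Phi$ is invariant under $x \mapsto x + c\bone$, so one should work on the quotient, i.e.\ restrict to $\sum_i x_i = 0$, or equivalently track only differences $x_i - x_j$.

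For the ``only if'' (balanceable $\Rightarrow$ irreducible) direction, I would argue by contrapositive. Suppose $G_K$ is not strongly connected; then its vertices partition into strongly connected components with a nontrivial partial order, so there exist nonempty disjoint $S, T$ with $S \cup T = [n]$ and no edge from $T$ to $S$, i.e.\ $K_{ij} = 0$ whenever $i \in T, j \in S$. For any positive diagonal $D$ with $A = DKD^{-1}$, summing the balancing equations $r_k(A) = c_k(A)$ over $k \in S$ gives that the total flow leaving $S$ equals the total flow entering $S$; but the flow entering $S$ is $\sum_{i \in T, j \in S} A_{ij} = 0$ while the flow leaving $S$ is $\sum_{i \in S, j \in T} A_{ij} = \sum_{i \in S, j \in T} D_{ii} K_{ij} D_{jj}^{-1} > 0$ unless $K_{ij} = 0$ for all $i \in S, j \in T$ as well. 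If $K_{ij} = 0$ across the cut in both directions, then $K$ is block-diagonal and one handles the blocks separately / notes $G_K$ genuinely disconnected. Hence no balancing can exist — contradiction.

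For the ``if'' (irreducible $\Rightarrow$ balanceable) direction, the substance is a coercivity argument: if $G_K$ is strongly connected then $\Phi$ is coercive on the subspace $\{\sum_i x_i = 0\}$ (equivalently, $\Phi(x) \to \infty$ as $\var{x} \to \infty$ along that subspace), and since $\Phi$ is continuous and convex, a minimizer exists, which as noted above is a balancing. To see coercivity, let $x$ have $\var{x} = \max_i x_i - \min_j x_j$ large; pick $i^\star$ achieving the max and $j^\star$ achieving the min. By strong connectivity there is a directed path $i^\star = v_0 \to v_1 \to \cdots \to v_\ell = j^\star$ in $G_K$ with $\ell \le \diamG$; along this path the telescoping sum $\sum_{t} (x_{v_{t-1}} - x_{v_t}) = x_{i^\star} - x_{j^\star} = \var{x}$, so some consecutive edge $(v_{t-1}, v_t)$ has $x_{v_{t-1}} - x_{v_t} \ge \var{x}/\ell \ge \var{x}/\diamG$, and since $K_{v_{t-1} v_t} > 0$ the corresponding term in $\sum_{ij} K_{ij} e^{x_i - x_j}$ is at least $\Kmin e^{\var{x}/\diamG} \to \infty$. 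Hence $\Phi(x) \ge \log \Kmin + \var{x}/\diamG \to \infty$. (This same estimate is essentially what underlies the $\diamG \log \condK$ diameter bound on $\|x^\star\|$ alluded to in Corollary~\ref{cor:bal:R}, so the argument is consistent with the rest of the paper.) Coercivity plus lower-semicontinuity gives existence of a minimizer on the closed subspace $\{\sum_i x_i = 0\}$, completing the proof.

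The main obstacle is the ``if'' direction's coercivity estimate — specifically, making sure the reduction to the subspace $\{\sum_i x_i = 0\}$ is clean (handling the translation-invariance) and that the path-based lower bound is stated for the right quantity ($\var{x}$, not $\|x\|$). Everything else — the stationarity $\Leftrightarrow$ balancing equivalence, convexity and continuity of $\Phi$, and the cut argument for the ``only if'' direction — is routine. Alternatively, since the lemma is attributed to \citet{Osborne60,EavHofRotSch85}, one could simply cite it; but the convex-analytic proof above is short, self-contained, and foreshadows the potential machinery developed in \S\ref{sec:pot}.
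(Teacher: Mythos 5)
The paper does not actually prove this lemma: it is quoted as a classical fact and attributed to \citep{Osborne60} (with the related footnote citing \citep{EavHofRotSch85}), so there is no in-paper argument to compare against and your proof must stand on its own. On its own terms, your ``if'' direction (strong connectivity $\Rightarrow$ balanceable) is correct and complete: the telescoping path bound $\Phi(x) \geq \log \Kmin + \var{x}/\diamG$ gives coercivity of $\Phi$ on $\{\sum_i x_i = 0\}$, translation invariance of $\Phi$ turns the subspace minimizer into an unconstrained minimizer, and $\nabla \Phi = 0$ is exactly the balancing condition; this is indeed the same estimate underlying Lemma~\ref{lem:bal:R} and Corollary~\ref{cor:bal:R}.

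The ``only if'' direction, however, has a genuine gap at its final step. Your flow argument correctly shows that if $(S,T)$ is a cut with no edges from $T$ to $S$, then any balancing forces $\sum_{i \in S, j \in T} A_{ij} = 0$, i.e.\ the cut is empty in both directions. But from this you conclude ``hence no balancing can exist --- contradiction,'' which is a non sequitur: when the cut is empty in both directions a balancing may well exist. Indeed, the identity matrix, or any direct sum of balanced irreducible blocks (two disjoint $2$-cycles, say), is balanceable while $G_K$ is not strongly connected, so the implication ``balanceable $\Rightarrow$ irreducible'' is false as literally stated and cannot be proved. What your cut argument actually establishes is the correct classical characterization of \citep{EavHofRotSch85}: $K$ is balanceable if and only if it is \emph{completely reducible}, i.e.\ there are no edges between distinct strongly connected components of $G_K$ (equivalently, $K$ is permutation-similar to a direct sum of irreducible blocks); irreducibility is the condition under which the balancing exists \emph{and} the scaling is essentially unique. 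The paper's phrasing is the usual shorthand, harmless in context because the standard preprocessing (see the footnote on Parlett--Reinsch) reduces to irreducible diagonal blocks, but in your write-up you should either prove and invoke the completely-reducible version or state the lemma under that reduction, rather than asserting a contradiction that does not follow.
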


\subsection{Matrix Balancing as convex optimization}\label{ssec:prelim:convex}

Key to to our analysis---as well as much of the other Matrix Balancing literature (e.g.,~\citep{KalKhaSho97,OstRabYou17,CohMadTsiVla17,NemRot99})---is the classical connection between (approximately) balancing a matrix $K \in \Rpnn$ and (approximately) solving the convex optimization problem
\begin{align}
\min_{x \in \R^n} \Phi(x) := \log \sum_{ij} e^{x_i - x_j} K_{ij}.
\label{eq:opt}
\end{align}
In words, balancing $K$ is equivalent to scaling $DKD^{-1}$ so that the sum of its entries is minimized. This equivalence follows from KKT conditions and convexity of $\Phi(x)$, which ensures that local optimality implies global optimality. Intuition comes from computing the gradient:
\begin{align}
\nabla \Phi(x) = \frac{A\bone - A^T\bone}{\sum_{ij} A_{ij}}, \quad \text{where } A := \diag(e^x)K\diag(e^{-x}).
\label{eq:grad}
\end{align}
Indeed, solutions of $\BALK$ are points where this gradient vanishes, and thus are in correspondence with minimizers of $\Phi$.
This also holds approximately: solutions of $\ABALKeps$ are in correspondence with $\eps$-stationary points for $\Phi$ w.r.t. the $\ell_1$ norm, i.e., $x \in \Rn$ for which $\|\nabla \Phi(x)\|_1 \leq \eps$. The following lemma summarizes these classical connections; for a proof see, e.g.,~\citep{KalKhaSho97}.

\begin{lemma}[Matrix Balancing as convex optimization]\label{lem:bal:convex-opt}
	Let $K \in \Rpnn$ and $\eps > 0$. Then:
	\begin{enumerate}
		\item $\Phi$ is convex over $\Rn$. 
		\item $x \in \Rn$ is a solution to $\BALK$ if and only if $x$ minimizes $\Phi$.
		\item $x \in \Rn$ is a solution to $\ABALKeps$ if and only if $\|\nabla \Phi(x)\|_1 \leq \eps$.
		\item If $K$ is balanceable, then $\Phi$ has a unique minimizer modulo translations of $\bone$.
	\end{enumerate}
\end{lemma}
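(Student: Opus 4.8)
\textbf{Proof proposal for Lemma~\ref{lem:bal:convex-opt}.} The plan is to establish the four claims essentially from the definition of $\Phi$ together with elementary convex-analytic facts, using the gradient formula~\eqref{eq:grad} as the connective tissue between the analytic statements about $\Phi$ and the combinatorial statement of (approximate) balancing. First I would prove convexity (item 1): $\Phi$ is the composition of the linear maps $x \mapsto (x_i - x_j)_{ij}$ with the log-sum-exp function $y \mapsto \log \sum_{ij} e^{y_{ij}} K_{ij}$, and the latter is convex (this is the classical fact that $\log \sum_\ell c_\ell e^{y_\ell}$ is convex for nonnegative weights $c_\ell$, provable by a direct Hölder/Cauchy-Schwarz computation on the Hessian, or by noting it is the support function / a cumulant generating function). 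Convexity is preserved under precomposition with a linear map, so $\Phi$ is convex on $\Rn$.

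Next I would dispatch items 2 and 3 simultaneously. Compute $\nabla \Phi(x)$ and verify it equals $(A\bone - A^T\bone)/\sum_{ij} A_{ij}$ as in~\eqref{eq:grad}; this is a one-line application of the chain rule, tracking that the contribution of $x_k$ picks up $+e^{x_i-x_j}K_{ij}$ whenever $i=k$ and $-e^{x_i-x_j}K_{ij}$ whenever $j=k$. Since $\Phi$ is convex (item 1), a point $x$ is a global minimizer if and only if $\nabla \Phi(x) = \zero$, which by the gradient formula says exactly $r(A) = c(A)$, i.e.\ $A = \diag(e^x)K\diag(e^{-x})$ is balanced --- this is item 2. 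For item 3, observe that the normalization $\sum_{ij}A_{ij} > 0$ (it is positive since $K \neq 0$), so $\|\nabla \Phi(x)\|_1 = \|r(A) - c(A)\|_1 / \sum_{ij} A_{ij} \leq \eps$ is literally the $\eps$-balanced condition~\eqref{eq-def:balance-approx}; there is no convexity needed here, just the definitions.

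For item 4, assuming $K$ balanceable (equivalently irreducible, Lemma~\ref{lem:balanceable}), I would show the minimizer is unique up to adding a multiple of $\bone$. Existence of a minimizer follows since item 2 plus balanceability guarantees a solution to $\BALK$. For uniqueness modulo $\bone$: $\Phi(x + t\bone) = \Phi(x)$ since the differences $x_i - x_j$ are translation-invariant, so it suffices to show $\Phi$ restricted to the quotient $\Rn / \spann(\bone)$ (say, to $\{x : \sum_i x_i = 0\}$) is strictly convex, or at least that its minimizer is unique there. The clean way is to identify the null space of the Hessian: $\nabla^2 \Phi(x)$ is (up to normalization) the Laplacian-type / covariance matrix of the distribution $P_{ij} = A_{ij}/\sum A_{ij}$ viewed as a quadratic form $v \mapsto \sum_{ij} P_{ij}(v_i - v_j)^2 - (\sum_{ij}P_{ij}(v_i-v_j))^2$, which vanishes on $v$ precisely when $v_i - v_j$ is constant across all $(i,j) \in \supp(K)$; irreducibility of $G_K$ then forces $v_i = v_j$ whenever there is a path, hence $v \in \spann(\bone)$. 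So $\Phi$ is strictly convex transverse to $\bone$, giving uniqueness modulo translation. Since the excerpt permits citing~\citep{KalKhaSho97} for a proof, I would likely just sketch this and defer the details to that reference.

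The main obstacle --- really the only place requiring care --- is item 4: existence plus the transverse-strict-convexity / Hessian-nullspace argument, where irreducibility of $G_K$ is genuinely used (the other three items hold for any nonzero $K$). In particular one must be slightly careful that $\Phi$ need not be coercive on all of $\Rn$ (it is constant along $\bone$), so existence of a minimizer is not purely a soft compactness argument on $\Rn$; it comes instead from balanceability (i.e.\ from a combinatorial/flow characterization, cf.~Lemma~\ref{lem:balanceable}), and then uniqueness follows from the strict convexity on the complementary subspace. Everything else is bookkeeping with the gradient formula~\eqref{eq:grad} and log-sum-exp convexity.
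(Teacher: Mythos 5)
Your proposal is correct, and it is worth noting that the paper itself does not prove this lemma: it defers the proof to~\citep{KalKhaSho97}, offering only the gradient formula~\eqref{eq:grad} and the remark that the equivalence follows from convexity and KKT conditions. Your argument fleshes out exactly that sketch---log-sum-exp convexity composed with the linear map $x \mapsto (x_i - x_j)$ for item 1, first-order optimality plus~\eqref{eq:grad} for item 2, and the observation that item 3 is definitional once the gradient is identified with the normalized imbalance---and then supplies the part the paper leaves entirely to the reference, namely item 4 via the Hessian-as-covariance computation $v^T\nabla^2\Phi(x)v = \sum_{ij}P_{ij}(v_i-v_j)^2 - (\sum_{ij}P_{ij}(v_i-v_j))^2$ and the standard segment-of-minimizers argument. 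One small step deserves to be made explicit: the nullspace condition gives only that $v_i - v_j$ equals a common constant $c$ on every edge of $G_K$, and you need to sum this relation around a directed cycle (which exists by strong connectivity, cf.\ Lemma~\ref{lem:balanceable} and the same telescoping trick used in Lemma~\ref{lem:pot-init}) to conclude $c=0$ before deducing $v \in \spann(\bone)$; as written, "irreducibility forces $v_i = v_j$ whenever there is a path" skips that one line, but the fix is immediate.
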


\subsection{Osborne's algorithm as coordinate descent}\label{ssec:prelim:cd}
Lemma~\ref{lem:bal:convex-opt} equates the \textit{problems} of (approximate) Matrix Balancing and (approximate) optimization of~\eqref{eq:opt}. This correspondence extends to \textit{algorithms}. In particular, in the sequel, we repeatedly leverage the following known connection, which appears in, e.g.,~\citep{OstRabYou17}.
\begin{obs}[Osborne's algorithm as Cordinate Descent]\label{obs:osb-cd}
	Osborne's algorithm for Matrix Balancing is equivalent to Exact Coordinate Descent for optimizing~\eqref{eq:opt}.
\end{obs}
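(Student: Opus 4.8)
The plan is to unwind both sides of the claimed equivalence into the same concrete update rule. First I would recall the definition of Exact Coordinate Descent on $\Phi(x) = \log \sum_{ij} e^{x_i - x_j} K_{ij}$: at each step, pick a coordinate $k \in [n]$ and replace $x_k$ by the exact minimizer of the one-dimensional restriction $t \mapsto \Phi(x + (t - x_k) e_k)$, holding all other coordinates fixed. So the core of the proof is to compute this one-dimensional minimizer in closed form and check that it coincides with the Osborne update $D_{kk} \leftarrow \sqrt{c_k(A)/r_k(A)}\, D_{kk}$, where $A = \diag(e^x) K \diag(e^{-x})$.

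The key computation is as follows. Fix $k$ and write $x' = x + s\, e_k$. Splitting the sum $\sum_{ij} e^{x'_i - x'_j} K_{ij}$ according to whether $i = k$, $j = k$, both, or neither, one sees that only the terms with exactly one index equal to $k$ pick up a factor of $e^{\pm s}$: the "row $k$" terms $\sum_{j \neq k} e^{x_k - x_j} K_{kj} = r_k(A) e^{-x_k}\cdot e^{x_k}$... more cleanly, in terms of the current scaling $A$, the restricted objective is $\log\bigl( e^{s} r_k(A) + e^{-s} c_k(A) + (\text{terms independent of } s)\bigr)$ up to the diagonal term $A_{kk}$ which is also $s$-independent. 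Wait — I should be careful that $r_k(A)$ here means the off-diagonal row sum; since the diagonal entry $A_{kk}$ is unaffected by the conjugation it contributes a constant, so it does not matter whether one includes it. Minimizing $e^{s} r_k(A) + e^{-s} c_k(A)$ over $s \in \R$ by setting the derivative $e^{s} r_k(A) - e^{-s} c_k(A)$ to zero gives $e^{2s} = c_k(A)/r_k(A)$, i.e. $e^{s} = \sqrt{c_k(A)/r_k(A)}$. Translating back from the log-domain variable $x$ to the diagonal matrix $D = \diag(e^x)$, this is exactly $D_{kk} \mapsto e^{s} D_{kk} = \sqrt{c_k(A)/r_k(A)}\, D_{kk}$, which is the Osborne update. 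Since $\Phi$ restricted to a line is convex (being a log-sum-exp), this stationary point is the unique minimizer, so the coordinate-descent step is well-defined and matches Osborne exactly.

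Finally, I would note that after the update the new scaling $A' = \diag(e^{x'}) K \diag(e^{-x'})$ has $r_k(A') = e^{s} r_k(A) = \sqrt{r_k(A) c_k(A)} = e^{-s} c_k(A) = c_k(A')$, confirming the stated effect of an Osborne update (the $k$-th row and column sums are equalized), and observing that this is precisely the condition $\partial_k \Phi(x') = 0$ obtained from the gradient formula \eqref{eq:grad}. This closes the loop: Osborne's update and the exact coordinate minimization are literally the same map on $\R^n$, for every choice of update coordinate $k$ and hence for every update-order rule (cyclic, greedy, random), so the two algorithms are identical.

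I do not expect a genuine obstacle here — the statement is essentially a bookkeeping identity that the paper invokes repeatedly, and the whole content is the one-line computation $e^{2s} = c_k(A)/r_k(A)$. The only thing requiring a little care is the treatment of the diagonal term $A_{kk}$ and making sure the "row sum" / "column sum" conventions (whether diagonal is included) are consistent between the Osborne-update description and the gradient formula \eqref{eq:grad}; since $A_{kk}$ is invariant under the conjugation it drops out of the minimization either way, so no issue arises. I would keep the write-up to a few lines, citing \citep{OstRabYou17} for the original observation and Lemma~\ref{lem:bal:convex-opt} for convexity of the line restriction.
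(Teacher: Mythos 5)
Your computation is the right one, and it is exactly the content behind the paper's observation: the paper itself gives no formal proof (it describes the two algorithms, states that Osborne is Exact Coordinate Descent on $\Phi$ initialized at $\zero$, and cites prior work), so your explicit minimization of $s \mapsto \log\bigl(e^{s}\rho + e^{-s}\gamma + \mathrm{const}\bigr)$ along coordinate $k$, yielding $e^{2s} = \gamma/\rho$, is a faithful (indeed more detailed) verification of the same fact, including the correct closing check via \eqref{eq:grad}.

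One caveat in your write-up is not quite right as stated: the treatment of the diagonal term. The coefficients of $e^{s}$ and $e^{-s}$ in the restricted objective are the \emph{off-diagonal} sums $\rho = r_k(A) - A_{kk}$ and $\gamma = c_k(A) - A_{kk}$, so the exact coordinate-descent step is $e^{2s} = (c_k(A)-A_{kk})/(r_k(A)-A_{kk})$, whereas the Osborne update as written in Algorithm~\ref{alg:osb} uses the full sums, $e^{2s} = c_k(A)/r_k(A)$. These two maps genuinely differ whenever $A_{kk}>0$; the fact that $A_{kk}$ enters the objective only as an additive constant does \emph{not} make the two formulas agree, contrary to your claim that ``it does not matter whether one includes it.'' The equivalence is exact under the standard convention that $K$ has zero diagonal (diagonal entries are invariant under the scaling and cancel in $r_k - c_k$, so this is the convention implicitly in force here and in the cited literature --- note the paper's own Lemma~\ref{lem:pot-dec} uses $r_k(A') = c_k(A') = \sqrt{r_k(A)c_k(A)}$, which likewise requires $A_{kk}=0$). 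So your proof is correct once you either assume zero diagonal or read $r_k, c_k$ in the Osborne update as off-diagonal sums; the same remark applies to your final identity $r_k(A') = e^{s}r_k(A)$.
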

To explain this connection, let us recall the basics of both algorithms. Exact Coordinate Descent is an iterative algorithm for minimizing a function $\Phi$ that maintains an iterate $x \in \Rn$, and in each iteration updates $x$ along a coordinate $k \in [n]$ by
\begin{align}
x
\gets
\argmin_{ z \in \{ x + \alpha e_k \, : \, \alpha \in \R \} } \Phi(z),
\label{eq:cd}
\end{align}
where $e_k$ denotes the $k$-th standard basis vector in $\R^n$. In words, this update~\eqref{eq:cd} improves the objective $\Phi(x)$ as much as possible by varying only the $k$-th coordinate of $x$. 
\par Osborne's algorithm, as introduced briefly in \S\ref{sec:intro}, is an iterative algorithm for Matrix Balancing that repeatedly balances row/column pairs. Algorithm~\ref{alg:osb} provides pseudocode for an implementation on the ``log domain'' that maintains the logarithms $x \in \Rn$ of the scalings rather than the scalings $\diag(e^x)$ themselves. The connection in Observation~\ref{obs:osb-cd} is thus, stated more precisely, that \emph{Osborne's algorithm is a specification of the Exact Coordinate Descent algorithm to minimizing the function $\Phi$ in~\eqref{eq:opt} with initialization of $\zero$.} This is because the Exact Coordinate Descent update to $\Phi$ on coordinate $k \in [n]$ updates $x_k$ so that $\frac{\partial \Phi}{\partial x_k}(x) = 0$, which by the derivative computation in~\eqref{eq:grad} amounts to updating $x_k$ so that the $k$-th row and column sums of the current balancing are equal---which is precisely the update rule for Osborne's algorithm on coordinate $k$.

\begin{algorithm}
	\caption{Osborne's algorithm for Matrix Balancing. The variant (e.g., Greedy, Random, or Random-Reshuffle Cyclic) depends on how the update coordinate is chosen in Line~\ref{line:osb-index}.}
	\hspace*{\algorithmicindent} \textbf{Input:} Matrix $K \in \Rpnn$ and accuracy $\eps > 0$ \\
	\hspace*{\algorithmicindent} \textbf{Output}: Vector $x \in \Rn$ that solves $\ABALKeps$
	\begin{algorithmic}[1]
		\State $x \gets \zero$ \Comment{Initialization}
		\While{$\diag(e^x)K\diag(e^{-x})$ is not $\eps$-balanced}
		\State Choose update coordinate $k \in [n]$ \label{line:osb-index}
		\State $x_k \gets x_k +
		\tfrac{\log(c_k( \diag(e^x)K\diag(e^{-x}) )) -\log(r_k( \diag(e^x)K\diag(e^{-x}) ))}{2}
		$ \Comment{Osborne update on coordinate $k$
		} \label{line:osb-update}
		\EndWhile
		\State \Return{$x$}
	\end{algorithmic}
	\label{alg:osb}
\end{algorithm}

We note that besides elucidating Observation~\ref{obs:osb-cd}, the log-domain implementation of Osborne's Algorithm in Algorithm~\ref{alg:osb} is also critical for numerical precision, both in theory and practice.

\begin{remark}[Log-domain implementation]\label{rem:logdomain}
	In practice, Osborne's algorithm should be implemented in the ``logarithmic domain'', i.e.,  store the iterates $x$ rather than the scalings $\diag(e^x)$, operate on $K$ through $\log K_{ij}$ (see Remark~\ref{rem:bits-log}), and compute Osborne updates using the following standard trick for numerically computing log-sum-exp: $\log ( \sum_{i=1}^n e^{z_i} ) = \max_j z_j + \log ( \sum_{i=1}^n e^{z_i - \max_j z_j} )$. In \S\ref{sec:bits}, we show that essentially just these modifications enable a provably logarithmic bit-complexity for several variants of Osborne's algorithm (Theorem~\ref{thm:osb-all:bits}).
\end{remark}

It remains to discuss the choice of update coordinate in Osborne's algorithm (Line~\ref{line:osb-index} of Algorithm~\ref{alg:osb}), or equivalently, in Coordinate Descent. We focus on the following natural options:
\begin{itemize}
	\item \textbf{Random-Reshuffle Cyclic Osborne.} Cycle through the coordinates, using an independent random permutation for the order each cycle.
	\item \textbf{Greedy Osborne.} Choose the coordinate $k$ for which the $k$-th row and column sums of the current scaling $A := \diag(e^x)K\diag(e^{-x})$ disagree most, as measured by
	\begin{align}
	\argmax_{k \in [n]} \abs{\sqrt{r_k(A)} - \sqrt{c_k(A)}}.
	\label{def:greedy}
	\end{align}
	(Ties are broken arbitrarily, e.g., lowest number.)
	\item \textbf{Random Osborne.} Sample $k$ uniformly from $[n]$, independently between iterations.
\end{itemize}

\begin{remark}[Efficient implementation of Greedy]\label{rem:greedy-amortized}
	In order to efficiently compute~\eqref{def:greedy}, Greedy Osborne maintains an auxiliary data structure: the row and column sums of the current balancing. This requires only $O(n)$ additional space, $O(m)$ additional computation in a pre-processing step, and $O(n)$ additional per-iteration computation for maintenance (increasing the per-iteration runtime by a small constant factor). 
\end{remark}

\subsection{Parallelizing Osborne's algorithm via graph coloring}\label{ssec:prelim:par}

For scalability, parallelization of Osborne's algorithm can be critical. It is well-known (see, e.g.,~\citep{BerTsi89}) that Osborne's algorithm can be parallelized when one can compute a (small) coloring of $G_K$, i.e., a partitioning $S_1, \dots, S_p$ of the vertices $[n]$ such that any two vertices in the same partitioning are non-adjacent.
This idea stems from the observation that simultaneous Osborne updates do not interfere with each other when performed on coordinates corresponding to \emph{non-adjacent} vertices in $G_K$. Indeed, this suggests a simple, natural parallelization of Osborne's algorithm given a coloring: update in parallel all coordinates of the same color. We call this algorithm \emph{Block Osborne} due to the following connection to Exact Block Coordinate Descent, i.e., the variant of Exact Coordinate Descent where an iteration exactly minimizes over a subset (a.k.a., block) of the variables.

\begin{remark}[Block Osborne as Block Coordinate Descent]\label{rem:par:block-cd}
	Extending Observation~\ref{obs:osb-cd}, \emph{Block} Osborne is equivalent to Exact \emph{Block} Coordinate Descent for minimizing $\Phi$. The connection to coloring is equivalently explained through this convex optimization lens: for each $S_{\ell}$, the (exponential\footnote{Note that by monotonocity of $\exp(\cdot)$, minimizing $\exp(\Phi(\cdot))$ is equivalent to minimizing $\Phi(\cdot)$.} of) $\Phi$ is separable in the variables in $S_{\ell}$. This is why their updates are independent.
\end{remark}

Just like the standard (non-parallelized) Osborne algorithm, the Block Osborne algorithm has several natural options for the choice of update block:
\begin{itemize}
	\item \textbf{Random-Reshuffle Cyclic Block Osborne.} Cycle through the blocks, using an independent random permutation for the order each cycle.
	\item \textbf{Greedy Block Osborne:} Choose the block $\ell$ maximizing
	\begin{align}
	\frac{1}{|S_{\ell}|}\sum_{k \in S_{\ell}} \left(\sqrt{r_k(A)} - \sqrt{c_k(A)}\right)^2
	\label{def:par-greedy}
	\end{align}
	where $A$ denotes the current balancing. (Ties are broken arbitrarily, e.g., lowest number.) 
	\item \textbf{Random Block Osborne.} Sample $\ell$ uniformly from $[p]$, independently between iterations.
\end{itemize}
Note that if $S_1,\dots,S_p$ are singletons---e.g., when $K \in \Rppnn$ is strictly positive---then these variants of Block Osborne degenerate into the corresponding variants of the standard Osborne algorithm.

Of course, Block Osborne first requires a coloring of $G_K$. A smaller coloring yields better parallelization (indeed we establish a linear runtime in the number of colors, see \S\ref{sec:par}). However, finding the (approximately) smallest coloring is NP-hard~\citep{Karp72,GarJoh76,Zuc06}. Nevertheless, in certain cases a relatively good coloring may be obvious or easily computable. For instance, in certain applications the sparsity pattern of $K$ could be structured, known a priori, and thus leveraged. An easily computable setting is matrices with uniformly sparse rows and columns, i.e., matrices whose corresponding graph $G_K$ has bounded max-degree; see Corollary~\ref{cor:par:unif}.

\section{Potential argument}\label{sec:pot}

Here we develop the ingredients for our potential-based analysis of Osborne's algorithm. They are purposely stated \emph{independently of the Osborne variant}, i.e., how the Osborne algorithm chooses update coordinates. This enables the argument to be applied directly to different variants in the sequel. We point the reader to \S\ref{ssec:intro:overview} for a high-level overview of the argument.

First, we recall the following standard bound on the initial potential. This appears in, e.g.,~\citep{OstRabYou17,CohMadTsiVla17}.
For completeness, we briefly recall the simple proof. Below, we denote the optimal value of the convex optimization problem~\eqref{eq:opt} by $\Phi^* := \min_{x \in \R^n} \Phi(x)$.

\begin{lemma}[Bound on initial potential]\label{lem:pot-init}
	$\Phi(\zero) - \Phi^* \leq \log \condK$.
\end{lemma}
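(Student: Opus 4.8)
The plan is to bound the initial potential $\Phi(\zero) - \Phi^*$ by bounding the two terms separately: an upper bound on $\Phi(\zero)$ and a lower bound on $\Phi^*$. First I would observe that at $x = \zero$, the scaling is just $K$ itself, so $\Phi(\zero) = \log \sum_{ij} K_{ij}$. This is immediate from the definition~\eqref{eq:opt}.

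Next I would lower bound $\Phi^*$. For any $x \in \Rn$, we have $\Phi(x) = \log \sum_{ij} e^{x_i - x_j} K_{ij} \geq \log \max_{ij : K_{ij} > 0} e^{x_i - x_j} K_{ij} \geq \log(K_{\min}) + \max_{ij : K_{ij} > 0}(x_i - x_j)$, since the sum of nonnegative terms is at least any single term. The key point is that $\max_{ij : K_{ij} > 0}(x_i - x_j) \geq 0$: since $K$ is balanceable (hence irreducible, so $G_K$ strongly connected and in particular has at least one edge), there is some pair $(i,j)$ with $K_{ij} > 0$, and along a cycle through $G_K$ the telescoping differences $x_i - x_j$ sum to zero, so at least one of them is nonnegative. (Even more simply, if $G_K$ has an edge $(i,j)$ with $i \neq j$, then either $x_i - x_j \geq 0$ or $x_j - x_i \geq 0$; and irreducibility for $n \geq 2$ forces such an off-diagonal edge, while the $n=1$ case is trivial.) Hence $\Phi(x) \geq \log K_{\min}$ for all $x$, so $\Phi^* \geq \log K_{\min}$.

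Combining, $\Phi(\zero) - \Phi^* \leq \log \sum_{ij} K_{ij} - \log K_{\min} = \log\!\big( (\sum_{ij} K_{ij}) / K_{\min} \big) = \log \condK$, which is exactly the claimed bound.

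I do not anticipate a serious obstacle here; this is a short argument. The only mildly delicate point is justifying $\max_{ij : K_{ij} > 0}(x_i - x_j) \geq 0$, i.e., that one cannot drive $\Phi$ below $\log K_{\min}$ by translating coordinates — this is where balanceability (irreducibility of $K$, Lemma~\ref{lem:balanceable}) gets used, via the existence of a closed walk so that the differences cannot all be strictly negative. I would state this cleanly, perhaps noting that $\Phi$ is invariant under $x \mapsto x + c\bone$ so WLOG and the cycle argument applies, and then the rest is just arithmetic with logarithms.
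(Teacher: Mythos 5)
Your proof is correct and takes essentially the same route as the paper's: it writes $\Phi(\zero) = \log \sum_{ij} K_{ij}$ and lower bounds $\Phi^* \geq \log \Kmin$ by using strong connectivity of $G_K$ to produce a cycle whose telescoping differences force some edge $(i,j)$ with $x_i - x_j \geq 0$, exactly the averaging argument in the paper. One small caveat: your parenthetical ``even more simply'' shortcut is not quite valid, since an edge $(i,j)$ in $G_K$ does not guarantee the reverse edge $(j,i)$ lies in $\supp(K)$, so the cycle argument is the one to keep.
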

\begin{proof}
	It suffices to show $\Phi^* \geq \log \Kmin$. Since $K$ is balanceable, $G_K$ is strongly connected (Lemma~\ref{lem:balanceable}), thus $G_K$ contains a cycle. By an averaging argument, this cycle contains an edge $(i,j)$ such that $x_i^* - x_j^* \geq 0$. Thus $\Phi^* \geq \log (e^{x_i^* - x_j^*} K_{ij}) \geq \log \Kmin$. 
\end{proof}

Next, we exactly compute the decrease in potential from an Osborne update on a fixed coordinate $k \in [n]$. This is a simple, direct calculation and is similar to~\citep[Lemma 2.1]{OstRabYou17}.

\begin{lemma}[Potential decrease from Osborne update]\label{lem:pot-dec}
	Consider any $x \in \R^n$ and update coordinate $k \in [n]$. Let $x'$ denote the output of an Osborne update on $x$ w.r.t. coordinate $k$, $A := \diag(e^x)K\diag(e^{-x})$ denote the scaling corresponding to $x$,  and $P := A/(\sum_{ij}A_{ij})$ its normalization. Then
	\begin{align}
	\Phi(x) - \Phi(x') = 
	- \log\left(1 - \left( \sqrt{r_k(P)} - \sqrt{c_k(P)} \right)^2 \right).
	\label{eq-lem:pot-dec}
	\end{align}
\end{lemma}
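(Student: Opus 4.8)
The statement is an exact identity, so the natural approach is a direct computation: write down $\Phi(x)$ and $\Phi(x')$ explicitly and simplify the difference. The plan is to first record that $\Phi(x) = \log \sum_{ij} A_{ij}$ by definition of $A$ and $\Phi$, and observe that normalizing to $P$ means $r_k(P) = r_k(A)/\sum_{ij}A_{ij}$ and $c_k(P) = c_k(A)/\sum_{ij}A_{ij}$, so the right-hand side only depends on the ratio $r_k(A)/c_k(A)$ and the mass on row/column $k$ relative to the total. Then I would analyze how a single Osborne update on coordinate $k$ changes the matrix: setting $\alpha := \tfrac12\bigl(\log c_k(A) - \log r_k(A)\bigr)$, the update multiplies $x_k$'s contribution so that row $k$ of $A$ gets scaled by $e^{\alpha}$, column $k$ gets scaled by $e^{-\alpha}$, and the diagonal entry $A_{kk}$ is unchanged (the $e^{\alpha}$ and $e^{-\alpha}$ cancel). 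All other entries are untouched.

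\textbf{Key steps.} Let $S := \sum_{ij} A_{ij}$, and split this sum into three pieces: the mass $r_k(A) - A_{kk}$ strictly in row $k$ (off-diagonal), the mass $c_k(A) - A_{kk}$ strictly in column $k$ (off-diagonal), the diagonal entry $A_{kk}$, and the remaining mass $S - r_k(A) - c_k(A) + A_{kk}$ untouched by the update. After the update, $\sum_{ij} A'_{ij} = e^{\alpha}(r_k(A) - A_{kk}) + e^{-\alpha}(c_k(A) - A_{kk}) + A_{kk} + \bigl(S - r_k(A) - c_k(A) + A_{kk}\bigr)$. Substituting $e^{\alpha} = \sqrt{c_k(A)/r_k(A)}$ and $e^{-\alpha} = \sqrt{r_k(A)/c_k(A)}$, the first two terms combine as $\sqrt{r_k(A)c_k(A)}\bigl(2\bigr) - A_{kk}\bigl(\sqrt{c_k(A)/r_k(A)} + \sqrt{r_k(A)/c_k(A)}\bigr)$; after collecting the $A_{kk}$ terms one finds $\sum_{ij}A'_{ij} = S - r_k(A) - c_k(A) + 2\sqrt{r_k(A)c_k(A)} = S - \bigl(\sqrt{r_k(A)} - \sqrt{c_k(A)}\bigr)^2$. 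Dividing by $S$ and using $r_k(P) = r_k(A)/S$, $c_k(P) = c_k(A)/S$ gives $\sum_{ij}A'_{ij}/S = 1 - \bigl(\sqrt{r_k(P)} - \sqrt{c_k(P)}\bigr)^2$. Finally, $\Phi(x) - \Phi(x') = \log S - \log \sum_{ij}A'_{ij} = -\log\bigl(\sum_{ij}A'_{ij}/S\bigr)$, which is exactly~\eqref{eq-lem:pot-dec}.

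\textbf{Main obstacle.} There is no deep obstacle here — the identity is purely bookkeeping — but the one place to be careful is the treatment of the diagonal entry $A_{kk}$. A naive accounting that scales ``row $k$ by $e^{\alpha}$ and column $k$ by $e^{-\alpha}$'' without noticing that $A_{kk}$ lies in \emph{both} would double-count it; the correct observation is that $A_{kk} \mapsto e^{\alpha}e^{-\alpha}A_{kk} = A_{kk}$ is invariant, and this is what makes the cross terms collapse cleanly into the perfect square $\bigl(\sqrt{r_k(A)}-\sqrt{c_k(A)}\bigr)^2$. (One should also note the elementary fact that $r_k(A), c_k(A) > 0$ since $G_K$ is strongly connected, so $\alpha$ is well-defined and the logarithm's argument is positive — indeed it equals $1$ minus a square of a difference of two numbers whose squares sum to at most $1$, hence is nonnegative, and is strictly positive unless all the mass of $P$ sits on row $k$ or column $k$ exclusively.) A secondary minor point is checking that no off-diagonal entry of $A$ is simultaneously in row $k$ and column $k$, which is immediate.
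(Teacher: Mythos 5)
Your overall route is the same as the paper's: the paper's proof also just computes the change in $e^{\Phi}$ directly, using that only the $k$-th row and column are rescaled and that their sums after the update become $\sqrt{r_k(A)c_k(A)}$, so that $e^{\Phi(x)}-e^{\Phi(x')}=(\sqrt{r_k(A)}-\sqrt{c_k(A)})^2=(\sqrt{r_k(P)}-\sqrt{c_k(P)})^2e^{\Phi(x)}$; your accounting with $S$ and $S'$ is simply a more explicit version of this calculation.

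However, the one point you single out as delicate---the diagonal entry---is exactly where your algebra is wrong. Write $r_k:=r_k(A)$, $c_k:=c_k(A)$. With the update factor $e^{\alpha}=\sqrt{c_k/r_k}$ defined from the \emph{full} row and column sums (diagonal included), the $A_{kk}$ terms in your expression for $\sum_{ij}A'_{ij}$ do not cancel: collecting them leaves $A_{kk}\bigl(2-\sqrt{c_k/r_k}-\sqrt{r_k/c_k}\bigr)=-A_{kk}\bigl(\sqrt{r_k}-\sqrt{c_k}\bigr)^2/\sqrt{r_kc_k}$, which vanishes only if $A_{kk}=0$ or $r_k=c_k$. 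In general one gets $\sum_{ij}A'_{ij}=S-\bigl(1+\tfrac{A_{kk}}{\sqrt{r_kc_k}}\bigr)\bigl(\sqrt{r_k}-\sqrt{c_k}\bigr)^2$, so the potential drops by \emph{more} than the right-hand side of \eqref{eq-lem:pot-dec}, and the stated equality is an identity only under the standard zero-diagonal convention $K_{kk}=0$ (diagonal entries are invariant under diagonal similarity; note also that with a nonzero diagonal the update in Algorithm~\ref{alg:osb} is no longer exact coordinate minimization, whose optimal factor would be $\sqrt{(c_k-A_{kk})/(r_k-A_{kk})}$). The paper's own one-line proof, which asserts $r_k(A')+c_k(A')=2\sqrt{r_k(A)c_k(A)}$, implicitly uses the same convention, so your proof is fine once you invoke $K_{kk}=0$ (or weaken the statement to an inequality, which is all the subsequent arguments need); but your claim that the invariance $A_{kk}\mapsto e^{\alpha}e^{-\alpha}A_{kk}$ is ``what makes the cross terms collapse cleanly'' is false as written.
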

\begin{proof}
	Let $A' := \diag(e^{x'}) K \diag(e^{-x'})$ denote the scaling corresponding to the next iterate $x'$. Then $e^{\Phi(x)} - e^{\Phi(x')} = (r_k(A) + c_k(A)) - (r_k(A') + c_k(A')) = (r_k(A) + c_k(A)) - 2\sqrt{r_k(A)}\sqrt{c_k}(A) = (\sqrt{r_k(A)} - \sqrt{c_k(A)})^2 =  ( \sqrt{r_k(P)} - \sqrt{c_k(P)})^2 e^{\Phi(x)}$. Dividing by $e^{\Phi(x)}$ and re-arranging proves~\eqref{eq-lem:pot-dec}.
\end{proof}

In the sequel, we lower bound the per-iteration progress in~\eqref{eq-lem:pot-dec} by $(\sqrt{r_k(P)} - \sqrt{c_k(P)})^2$ using the elementary inequality $-\log(1 - z) \geq z$. Analyzing this further requires knowledge of how $k$ is chosen, i.e., the Osborne variant. However, for both Greedy Osborne and Random Osborne, this progress is at least the average
\begin{align}
\frac{1}{n}\sum_{k=1}^n (\sqrt{r_k(P)} - \sqrt{c_k(P)})^2 = \frac{2}{n}\Hell^2\big(r(P),c(P)\big).
\label{eq-pot:hell}
\end{align}
(For Random Osborne, this statement requires an expectation; see \S\ref{sec:rand}.) The rest of this section establishes the main ingredient in the potential argument: Proposition~\ref{prop:hell-lb} lower bounds this Hellinger imbalance, and thereby lower bounds the per-iteration progress. Note that Proposition~\ref{prop:hell-lb} is stated for ``nontrivial balancings'', i.e., $x \in \Rn$ satisfying $\Phi(x) \leq \Phi(\zero)$. This automatically holds for any iterate of the Osborne algorithm---regardless of the variant---since the first iterate is initialized to $\zero$, and since the potential is monotonically non-increasing by Lemma~\ref{lem:pot-dec}. 

\begin{prop}[Lower bound on Hellinger imbalance]\label{prop:hell-lb}
	Consider any $x \in \Rn$. Let $A := \diag(e^x) K \diag(e^{-x})$ denote the corresponding scaling, and let $P := A / \sum_{ij}A_{ij}$ denote its normalization. If $\Phi(x) \leq \Phi(\zero)$ and $A$ is not $\eps$-balanced, then
	\begin{align}
	\Hell^2\big(r(P),c(P)\big)
	\geq
	\frac{1}{8} \left( \frac{\Phi(x) - \Phi^*}{\diamG \log \condK} \vee \eps \right)^2.
	\label{eq-lem:pot-dec:rand}
	\end{align}
\end{prop}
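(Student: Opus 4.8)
The plan is to prove the two lower bounds on $\Hell^2(r(P), c(P))$ separately and then combine them, since the right-hand side of \eqref{eq-lem:pot-dec:rand} is (up to the constant $\tfrac18$) the maximum of two quantities squared. First, the \emph{additive} bound $\Hell^2(r(P), c(P)) \geq \eps^2/8$: this is the easy direction. Since $A$ is not $\eps$-balanced, by definition \eqref{eq-def:balance-approx} we have $\|r(A) - c(A)\|_1 / \sum_{ij} A_{ij} > \eps$, i.e. $\|r(P) - c(P)\|_1 > \eps$, so $\TV(r(P), c(P)) > \eps/2$. The standard inequality relating total variation and Hellinger distance, $\TV(\mu,\nu) \leq \sqrt{2}\, \Hell(\mu,\nu)$ (equivalently $\Hell^2 \geq \TV^2/2$), then gives $\Hell^2(r(P), c(P)) \geq \tfrac12 (\eps/2)^2 = \eps^2/8$. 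This handles the $\vee \eps$ term.

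The substantive part is the \emph{relative} bound $\Hell^2(r(P), c(P)) \geq \tfrac18 \big( (\Phi(x) - \Phi^*)/(\diamG \log \condK) \big)^2$, and this is where the novelty (exploiting log-convexity of $e^{\Phi}$, i.e. convexity of $\Phi$) comes in. I would proceed as follows. Let $x^*$ be the minimizer of $\Phi$ (modulo $\bone$, fixed by Lemma~\ref{lem:bal:convex-opt}), and consider the segment from $x$ to $x^*$. By convexity of $\Phi$ (Lemma~\ref{lem:bal:convex-opt}(1)), $\Phi(x) - \Phi^* \leq \langle \nabla\Phi(x), x - x^*\rangle \leq \|\nabla\Phi(x)\|_1 \|x - x^*\|_\infty$ by Hölder. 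Now $\|\nabla\Phi(x)\|_1 = \|r(P) - c(P)\|_1 = 2\,\TV(r(P), c(P)) \leq 2\sqrt{2}\,\Hell(r(P), c(P))$ by \eqref{eq:grad} and the TV–Hellinger inequality again. It remains to bound $\|x - x^*\|_\infty$. The iterate $x$ satisfies $\Phi(x) \leq \Phi(\zero) \leq \Phi^* + \log\condK$ (Lemma~\ref{lem:pot-init}), so $x$ lives in a sublevel set of $\Phi$, and I expect a bound of the form $\|x - x^*\|_\infty \leq \diamG \log\condK$ (up to an additive multiple of $\bone$, which we can translate away) to follow from a diameter-of-sublevel-set argument: this is exactly the content of the referenced "Corollary~\ref{cor:bal:R}" mentioned in the Fast Coordinate Descent remark. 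Concretely, one traces a path in $G_K$ of length at most $\diamG$ between any two coordinates $i, j$; along each edge $(a,b)$ of the path, the term $e^{x_a - x_b} K_{ab}$ appears in $e^{\Phi(x)}$, so $x_a - x_b \leq \Phi(x) - \log\Kmin \leq \log\condK$ (using $\Phi(x) \leq \log \sum K_{ij}$, which would hold at an iterate since $\Phi(x) \leq \Phi(\zero) = \log\sum K_{ij}$); summing along the path gives $\var{x} = \max_i x_i - \min_j x_j \leq \diamG \log\condK$, which after translation by $\bone$ bounds $\|x - x^*\|_\infty$. Combining: $\Phi(x) - \Phi^* \leq 2\sqrt{2}\,\Hell(r(P),c(P)) \cdot \diamG\log\condK$, i.e. $\Hell^2(r(P), c(P)) \geq \tfrac18 \big( (\Phi(x) - \Phi^*)/(\diamG\log\condK) \big)^2$. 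Taking the maximum of the two bounds and noting $\max(a^2, b^2) = (\max(a,b))^2$ for nonnegative $a,b$ finishes the proof.

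The main obstacle I anticipate is establishing the bound $\|x - x^*\|_\infty \lesssim \diamG \log\condK$ cleanly — specifically, being careful about the translation-by-$\bone$ ambiguity (since $\Phi$ and balancing are invariant under $x \mapsto x + t\bone$, the natural controlled quantity is the variation seminorm $\var{x}$, not $\|x\|_\infty$), and making sure the path-in-$G_K$ argument correctly uses $\Phi(x) \leq \Phi(\zero)$ rather than just $\Phi(x) \leq \Phi^* + \log\condK$. A secondary point of care is that $x^*$ itself must be chosen in the right translate so that $\|x - x^*\|_\infty = \var{x - x^*}/2 \leq (\var{x} + \var{x^*})/2$; since $\Phi(x^*) = \Phi^* \leq \Phi(\zero)$ as well, the same variation bound applies to $x^*$, giving the constant I claimed (possibly off by a factor of $2$, which is absorbed — one may need to track whether the final constant is $\tfrac18$ or requires slight adjustment, but the paper's statement suggests $\tfrac18$ works with these estimates). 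Everything else is elementary: the gradient formula \eqref{eq:grad}, the TV–Hellinger inequality, convexity, and Hölder.
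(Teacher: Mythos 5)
Your proposal is correct and follows essentially the same route as the paper: bound $\Hell^2$ below by $\tfrac18\|r(P)-c(P)\|_1^2$ via the Hellinger--$\ell_1$ (equivalently TV) inequality, then lower bound $\|r(P)-c(P)\|_1$ both by $\eps$ (from non-$\eps$-balancedness) and by $(\Phi(x)-\Phi^*)/(\diamG\log\condK)$ using convexity of $\Phi$, H\"older, the gradient formula \eqref{eq:grad}, and the variation-norm/path-in-$G_K$ bound on sublevel sets with the translation-by-$\bone$ choice of $x^*$ (the paper's Lemma~\ref{lem:bal:R} and Corollary~\ref{cor:bal:R}). Your constant tracking also matches the paper's, including the observation that $(\var{x}+\var{x^*})/2 \leq \diamG\log\condK$, so no factor of $2$ is lost.
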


To prove Proposition~\ref{prop:hell-lb}, we collect several helpful lemmas. The first is a standard inequality in statistics which lower bounds the Hellinger distance between two probability distributions by their $\ell_1$ distance (or equivalently, up to a factor of $2$, their total variation distance)~\citep{DezaDeza}. 
A short, simple proof via Cauchy-Schwarz is provided for completeness.

\begin{lemma}[Hellinger versus $\ell_1$ inequality]\label{lem:hell-ineq}
	If $\mu, \nu \in \Delta_n$, then
	\begin{align}
	\Hell(\mu,\nu) \geq \frac{1}{2\sqrt{2}} \|\mu - \nu\|_1.
	\end{align}
\end{lemma}
\begin{proof}
	By Cauchy-Schwarz, $
\|\mu - \nu \|_1^2
=
(\sum_k |\mu_k - \nu_k|)^2
=
(\sum_k |\sqrt{\mu_k} - \sqrt{\nu_k}| \cdot |\sqrt{\mu_k} + \sqrt{\nu_k}|)^2
\leq
(\sum_k (\sqrt{\mu_k} - \sqrt{\nu_k})^2) \cdot (\sum_k (\sqrt{\mu_k} + \sqrt{\nu_k})^2) = 2\Hell^2(\mu,\nu) \cdot ( \sum_k (\mu_k + \nu_k + 2\sqrt{\mu_k \nu_k}) )$. 
By the AM-GM inequality and the assumption $\mu,\nu \in \Delta_n$, the latter sum is at most $
\sum_k (\mu_k + \nu_k + 2\sqrt{\mu_k \nu_k}) \leq
2 \sum_k (\mu_k + \nu_k) = 4$.
\end{proof}

Next, we recall the following standard bound on the variation norm of nontrivial balancings. This bound is often stated only for optimal balancings (e.g.,~\citep[Lemma 4.24]{CohMadTsiVla17})---however, the proof extends essentially without modifications; details are provided briefly for completeness.

\begin{lemma}[Variation norm of nontrivial balancings]\label{lem:bal:R}
	If $x \in \Rn$ satisfies $\Phi(x) \leq \Phi(0)$, then $\var{x} \leq \diamG \log \condK$.
\end{lemma}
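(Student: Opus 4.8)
The goal is to bound $\var{x}$ in terms of the graph-theoretic quantity $\diamG$ and the conditioning $\log\condK$. The plan is to fix the two coordinates realizing the variation norm and build a short path between them in $G_K$ along which the potential upper bound $\Phi(x) \leq \Phi(\zero)$ forces control on the coordinate differences. First I would let $p,q \in [n]$ be such that $x_p - x_q = \var{x}$. Since $K$ is balanceable, $G_K$ is strongly connected (Lemma~\ref{lem:balanceable}), so there is a directed path from $q$ to $p$ in $G_K$ of length at most $\diamG$; say this path is $q = v_0, v_1, \dots, v_L = p$ with $L \leq \diamG$ and each $(v_{s}, v_{s+1}) \in \supp(K)$. (One may need to be slightly careful about the direction convention in $\Phi(x) = \log\sum_{ij} e^{x_i - x_j}K_{ij}$, choosing the path so that the edge differences $x_{v_{s+1}} - x_{v_s}$ sum telescopically to $x_p - x_q$; if the naive direction gives the wrong sign one takes the path from $p$ to $q$ instead, using that $G_K$ is strongly connected so paths exist in both directions.)

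Next I would extract a per-edge bound from $\Phi(x) \leq \Phi(\zero)$. For each edge $(i,j) \in \supp(K)$ on the path, the single term $e^{x_i - x_j} K_{ij}$ is at most $\sum_{i'j'} e^{x_{i'} - x_{j'}} K_{i'j'} = e^{\Phi(x)} \leq e^{\Phi(\zero)} = \sum_{i'j'} K_{i'j'}$. Rearranging gives $x_i - x_j \leq \log\big( (\sum_{i'j'} K_{i'j'})/K_{ij}\big) \leq \log\big( (\sum_{i'j'} K_{i'j'})/\Kmin \big) = \log\condK$. So every edge difference along the path (taken in the direction in which it telescopes to $x_p - x_q > 0$) is at most $\log\condK$.

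Then I would telescope: $\var{x} = x_p - x_q = \sum_{s=0}^{L-1} (x_{v_{s+1}} - x_{v_s})$, and bounding each of the $L \leq \diamG$ summands by $\log\condK$ yields $\var{x} \leq \diamG \log\condK$, as claimed.

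The main obstacle I anticipate is purely bookkeeping: getting the orientation of the path right so that the chain of edge-term inequalities (each of the form ``one term $\leq$ the whole sum'') lines up to give $x_p - x_q$ with the correct sign. Strong connectivity of $G_K$ guarantees a path in whichever direction is needed, and the diameter $\diamG$ bounds its length regardless of direction, so this is easily handled — there is no genuine analytic difficulty, and the result is essentially the standard optimal-balancing bound (cf.\ \citep[Lemma 4.24]{CohMadTsiVla17}) with $\Phi^*$ replaced by $\Phi(\zero)$ in the one inequality where optimality was used, which works because we only ever needed $\Phi(x) \leq \Phi(\zero)$ together with $\Phi(\zero) = \log\sum_{ij}K_{ij}$.
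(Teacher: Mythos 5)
Your proof is correct and takes essentially the same route as the paper: a per-edge bound $x_i - x_j \leq \log \condK$ extracted from $e^{x_i - x_j}K_{ij} \leq e^{\Phi(x)} \leq e^{\Phi(\zero)} = \sum_{i'j'} K_{i'j'}$, telescoped along a path of length at most $\diamG$. The only bookkeeping point, which your hedge already handles via strong connectivity, is that the path must run from the maximizing coordinate $p$ to the minimizing coordinate $q$, since each edge $(v_s,v_{s+1})$ bounds $x_{v_s} - x_{v_{s+1}}$ and these telescope to $x_p - x_q$.
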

\begin{proof}
	Consider any $u,v \in [n]$. By definition of $\diamG$, there exists a path in $G_K$ from $u$ to $v$ of length at most $\diamG$. For each edge $(i,j)$ on the path, we have $e^{x_i - x_j} K_{ij} \leq \Phi(x) \leq \Phi(0)$, and thus $x_i - x_j \leq \log \condK$. Summing this inequality along the edges of the path and telescoping yields $x_u - x_v \leq d \log \condK$. Since this holds for any $u,v$, we conclude $\var{x} = \max_u x_u - \min_v x_v \leq d \log \condK$.
\end{proof}

From Lemma~\ref{lem:bal:R}, we deduce the following bound. 

\begin{cor}[$\ell_{\infty}$ distance of nontrivial balancings to minimizers]\label{cor:bal:R}
	If $x \in \Rn$ satisfies $\Phi(x) \leq \Phi(0)$, then there exists a minimizer $x^*$ of $\Phi$ such that $\|x - x^*\|_{\infty} \leq \diamG \log \condK$.
\end{cor}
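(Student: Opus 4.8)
\textbf{Proof plan for Corollary~\ref{cor:bal:R}.} The plan is to reduce the $\ell_\infty$ bound to the variation-norm bound of Lemma~\ref{lem:bal:R} by exploiting the translation invariance of $\Phi$. Recall from Lemma~\ref{lem:bal:convex-opt}(4) that, since $K$ is balanceable, $\Phi$ has a unique minimizer modulo translations by $\bone$; fix any minimizer $y^*$ and note that $y^* + c\bone$ is a minimizer for every $c \in \R$, and moreover $\var{y^* + c\bone} = \var{y^*}$ while $\Phi(y^* + c\bone) = \Phi(y^*) = \Phi^* \leq \Phi(0)$, so $y^*$ itself satisfies the hypothesis of Lemma~\ref{lem:bal:R}.

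First I would apply Lemma~\ref{lem:bal:R} to the given $x$ and to $y^*$ to get $\var{x} \leq \diamG \log\condK$ and $\var{y^*} \leq \diamG\log\condK$. That is not quite enough on its own, since two vectors each of small variation norm can still be far apart in $\ell_\infty$; the point is that we are free to translate $y^*$. So the key step is: consider the vector $z := x - y^*$, choose the translation constant $c^* := \tfrac{1}{2}\big(\max_i z_i + \min_i z_i\big)$, and set $x^* := y^* + c^*\bone$, which is again a minimizer of $\Phi$. Then $x - x^* = z - c^*\bone$, and by construction $\max_i (z_i - c^*) = -\min_i(z_i - c^*) = \tfrac{1}{2}\var{z}$, so $\|x - x^*\|_\infty = \tfrac{1}{2}\var{z} = \tfrac{1}{2}\var{x - y^*}$.

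It remains to bound $\var{x - y^*}$. Here I would use subadditivity of the variation seminorm: $\var{x - y^*} \leq \var{x} + \var{y^*}$ (this follows since $\var{\cdot}$ is a seminorm — e.g. $\var{u} = \max_i u_i - \min_j u_j$ is the support function of the difference set of the simplex vertices, hence convex and positively homogeneous, hence subadditive; alternatively argue directly that $\max_i(x_i - y_i^*) \leq \max_i x_i - \min_i y_i^*$ and similarly for the min, then subtract). Combining the two applications of Lemma~\ref{lem:bal:R} gives $\var{x - y^*} \leq 2\diamG\log\condK$, whence $\|x - x^*\|_\infty \leq \tfrac{1}{2}\cdot 2\diamG\log\condK = \diamG\log\condK$, as claimed.

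I do not expect any genuine obstacle here; the only thing requiring a moment's care is that the bound must use the \emph{right} translate of the minimizer rather than an arbitrary one, which is exactly why the midpoint choice $c^*$ appears — it is what converts a two-sided variation bound into a symmetric $\ell_\infty$ bound without losing the factor. The subadditivity of $\var{\cdot}$ is routine and could even be inlined in a single sentence rather than invoked as a named property.
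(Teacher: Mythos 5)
Your proposal is correct and is essentially the paper's own argument: the paper likewise translates an arbitrary minimizer so that $\max_i(x-x^*)_i = -\min_j(x-x^*)_j$ (your midpoint choice $c^*$), bounds $\|x-x^*\|_\infty \leq (\var{x}+\var{x^*})/2$ via the same subadditivity of the variation seminorm, and applies Lemma~\ref{lem:bal:R} to both $x$ and the minimizer. No gaps.
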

\begin{proof}
	By definition, $\Phi$ is invariant under translations of $\bone$. Choose any minimizer $x^*$ and translate it by a multiple of $\bone$ so that $\max_i (x - x^*)_i = - \min_j (x - x^*)_j$. Then $\|x-x^*\|_{\infty} = (\max_i (x_i - x_i^*) - \min_j (x_j - x_j^*))/2 \leq ((\max_i x_i - \min_j x_j) + (\max_i x_i^* - \min_j x_j^*))/2 = (\var{x} + \var{x^*})/2$.
	  By Lemma~\ref{lem:bal:R}, this is at most $\diamG \log \condK$.
\end{proof}

We are now ready to prove Proposition~\ref{prop:hell-lb}.

\begin{proof}[Proof of Proposition~\ref{prop:hell-lb}]
	Since $P$ is normalized, its marginals $r(P)$ and $c(P)$ are both probability distributions in $\Delta_n$. Thus by Lemma~\ref{lem:hell-ineq},
	\begin{align}
	\Hell^2\big(r(P),c(P)\big)
	\geq
	\frac{1}{8} \|r(P) - c(P)\|_1^2.
	\label{eq-pf-hell-lb:main}
	\end{align}
	The claim now follows by lower bounding $\|r(P) - c(P)\|_1$ in two different ways. The first is $\|r(P) - c(P)\|_1 \geq \eps$, which holds since $A$ is not $\eps$-balanced by assumption. 
	The second is
	\begin{align}
 	\|r(P) - c(P)\|_1 \geq \frac{\Phi(x) - \Phi(x^*)}{\diamG \log \condK},
 	\label{eq-pf-hell-lb:2}
	\end{align}
	which we show presently. By convexity of $\Phi$ (Lemma~\ref{lem:bal:convex-opt}) and then H\"older's inequality, 
	\begin{align}
	\Phi(x) - \Phi(x^*) \leq \langle \nabla \Phi(x), x - x^* \rangle \leq \|\nabla \Phi(x)\|_1 \|x-x^*\|_{\infty}
	\label{eq-pf-hell-lb:convexity}
	\end{align}
	for any minimizer $x^*$ of $\Phi$. Now by Corollary~\ref{cor:bal:R}, there exists a minimizer $x^*$ such that $\|x - x^*\|_{\infty} \leq \diamG \log\condK$; and by~\eqref{eq:grad}, the gradient is $\nabla \Phi(x) = r(P) - c(P)$. Re-arranging~\eqref{eq-pf-hell-lb:convexity} therefore establishes~\eqref{eq-pf-hell-lb:2}.
\end{proof}

\section{Greedy Osborne converges quickly}\label{sec:greedy}

Here we show an improved runtime bound for Greedy Osborne that, for well-connected sparsity patterns, scales (near) linearly in both the total number of entries $n^2$ and the inverse accuracy $\eps^{-1}$. See \S\ref{ssec:intro:contributions} for further discussion of the result, and \S\ref{sssec:intro:overview-greedy} for a proof sketch.

\begin{theorem}[Convergence of Greedy Osborne]\label{thm:osb-greedy}
	Given a balanceable matrix $K \in \Rpnn$ and accuracy $\eps > 0$, Greedy Osborne 
	solves $\ABALKeps$
	in 
	$O(\tfrac{n^2}{\eps} (\tfrac{1}{\eps} \wedge \diamG)\log \condK)$ arithmetic operations.
\end{theorem}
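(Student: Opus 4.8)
The plan is to combine the per-iteration potential decrease from Lemma~\ref{lem:pot-dec} with the Hellinger lower bound from Proposition~\ref{prop:hell-lb}, and then run a two-phase amortization argument on the potential. Since each Greedy Osborne iteration costs $O(n)$ arithmetic operations (recall the auxiliary data structure of Remark~\ref{rem:greedy-amortized}, maintaining the row and column sums), it suffices to bound the number of iterations by $O(\tfrac{n}{\eps}(\tfrac{1}{\eps}\wedge \diamG)\log\condK)$. Write $\Phi_t := \Phi(\xt) - \Phi^*$ for the potential at iteration $t$; by Lemma~\ref{lem:pot-init} we have $\Phi_0 \leq \log\condK$, and by Lemma~\ref{lem:pot-dec} the sequence $\Phi_t$ is nonnegative and nonincreasing, so every iterate satisfies $\Phi(\xt) \leq \Phi(\zero)$ and Proposition~\ref{prop:hell-lb} applies.

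First I would establish the key per-iteration inequality. For Greedy Osborne the chosen coordinate $k$ maximizes $|\sqrt{r_k(A)}-\sqrt{c_k(A)}|$, hence $(\sqrt{r_k(P)}-\sqrt{c_k(P)})^2 \geq \tfrac1n\sum_{\ell}(\sqrt{r_\ell(P)}-\sqrt{c_\ell(P)})^2 = \tfrac2n \Hell^2(r(P),c(P))$, where I use that normalizing by $\sum_{ij}A_{ij}$ scales all terms equally. Combining with $-\log(1-z)\geq z$ in Lemma~\ref{lem:pot-dec} and then Proposition~\ref{prop:hell-lb} gives, as long as the current iterate is not $\eps$-balanced,
\begin{align}
\Phi_t - \Phi_{t+1} \;\geq\; \frac{2}{n}\Hell^2\big(r(P),c(P)\big) \;\geq\; \frac{1}{4n}\left(\frac{\Phi_t}{R}\vee \eps\right)^2,
\label{eq-plan:greedy-dec}
\end{align}
where $R := \diamG\log\condK$.

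Next I would run the two-phase argument on the recursion~\eqref{eq-plan:greedy-dec}. \textbf{Phase 1} (relative decrease): while $\Phi_t \geq \eps R$, the first term in the max dominates, so $\Phi_t - \Phi_{t+1} \geq \tfrac{1}{4nR^2}\Phi_t^2$, i.e. $\Phi_{t+1} \leq \Phi_t(1 - \tfrac{\Phi_t}{4nR^2})$. Dividing through, $\tfrac{1}{\Phi_{t+1}} \geq \tfrac{1}{\Phi_t} + \tfrac{1}{4nR^2}$ (using $1/(x(1-c)) \geq (1+c)/x \geq 1/x + c/x$ for the relevant range, or more carefully $\tfrac{1}{\Phi_{t+1}}-\tfrac{1}{\Phi_t} = \tfrac{\Phi_t-\Phi_{t+1}}{\Phi_t\Phi_{t+1}} \geq \tfrac{\Phi_t-\Phi_{t+1}}{\Phi_t^2} \geq \tfrac{1}{4nR^2}$). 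Telescoping, after $T_1$ steps $\tfrac{1}{\Phi_{T_1}} \geq \tfrac{T_1}{4nR^2}$, so $\Phi_{T_1}$ drops below $\eps R$ once $T_1 = O(\tfrac{nR^2}{\eps R}) = O(\tfrac{nR}{\eps})$ iterations have elapsed. \textbf{Phase 2} (additive decrease): once $\Phi_t \leq \eps R$, the $\eps$ term dominates (or is at least comparable) and $\Phi_t - \Phi_{t+1} \geq \tfrac{\eps^2}{4n}$, so after at most $O(\tfrac{\eps R}{\eps^2/n}) = O(\tfrac{nR}{\eps})$ further iterations the potential would become negative — impossible — so convergence (an $\eps$-balanced iterate) must occur first. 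Summing the two phases gives $O(\tfrac{nR}{\eps}) = O(\tfrac{n\,\diamG\log\condK}{\eps})$ iterations. Finally, to obtain the $\tfrac1n(\tfrac1\eps\wedge\diamG)$ form rather than just $\tfrac{\diamG}{\eps}$, I would observe that one may always replace $\diamG$ by $1/\eps$ in the analysis: indeed~\eqref{eq-ove:1}-style reasoning (use only the $\eps$ branch of~\eqref{eq-plan:greedy-dec}, i.e. $\Phi_t - \Phi_{t+1}\geq \eps^2/(4n)$ at every unconverged step) already yields the crude $O(\tfrac{n}{\eps^2}\log\condK)$ bound, and taking the minimum of the two bounds gives $O(\tfrac{n}{\eps}(\tfrac1\eps\wedge\diamG)\log\condK)$ iterations, hence $O(\tfrac{n^2}{\eps}(\tfrac1\eps\wedge\diamG)\log\condK)$ arithmetic operations.

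The main obstacle — really the only nonroutine point, and it has already been handled upstream — is the sharp Hellinger lower bound of Proposition~\ref{prop:hell-lb}, specifically the $\Phi_t/R$ branch, which is what powers the fast Phase~1 geometric-type decay; this is precisely where log-convexity of the potential (via the convexity inequality $\Phi(x)-\Phi(x^*)\leq\langle\nabla\Phi(x),x-x^*\rangle$ and the $\ell_\infty$ diameter bound of Corollary~\ref{cor:bal:R}) is exploited. Given that proposition, the remaining work here is just the elementary recursion bookkeeping above, plus the standard remark (Remark~\ref{rem:greedy-amortized}) that the greedy coordinate and the updated row/column sums can be maintained in $O(n)$ time per iteration so that the per-iteration cost is genuinely $O(n)$ and not $O(m)$.
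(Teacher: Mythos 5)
Your proposal is correct and follows essentially the same route as the paper: the same per-iteration inequality (the content of Lemma~\ref{lem:pot-dec:greedy}, obtained by combining Lemma~\ref{lem:pot-dec}, the greedy choice, and Proposition~\ref{prop:hell-lb}), the same two-regime treatment of the $\eps$ branch versus the $(\Phi(x)-\Phi^*)/(\diamG\log\condK)$ branch, and the same $O(n)$ amortized per-iteration cost via Remark~\ref{rem:greedy-amortized}. The only (cosmetic) difference is the bookkeeping in the relative-decrease phase: you telescope $1/\Phi_t$ for the quadratic-decrement recursion, whereas the paper partitions that phase into dyadic levels where the potential halves; both give the same $O(\tfrac{n}{\eps}\,\diamG\log\condK)$ iteration count.
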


The key lemma is that each iteration of Greedy Osborne improves the potential significantly. 

\begin{lemma}[Potential decrease of Greedy Osborne]\label{lem:pot-dec:greedy}
	Consider any $x \in \Rn$ for which the corresponding scaling $A := \diag(e^x) K \diag(e^{-x})$ is not $\eps$-balanced. If $x'$ is the next iterate obtained from a Greedy Osborne update, then
	\begin{align*}
	\Phi(x) - \Phi(x')
	\geq
	\frac{1}{4n} \left( \frac{\Phi(x) - \Phi^*}{\diamG \log \condK} \vee \eps \right)^2.
	\end{align*}
\end{lemma}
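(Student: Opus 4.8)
The plan is to combine the exact per-iteration potential decrease (Lemma~\ref{lem:pot-dec}) with the greedy selection rule and the Hellinger lower bound (Proposition~\ref{prop:hell-lb}). First I would apply Lemma~\ref{lem:pot-dec} to the coordinate $k$ chosen by the greedy rule~\eqref{def:greedy}, and use the elementary inequality $-\log(1-z) \geq z$ (valid for $z < 1$) to get
\begin{align*}
\Phi(x) - \Phi(x') \geq \left(\sqrt{r_k(P)} - \sqrt{c_k(P)}\right)^2,
\end{align*}
where $P$ is the normalization of $A$. Note that $z = (\sqrt{r_k(P)} - \sqrt{c_k(P)})^2 < 1$ here: since $P \in \Delta_n$ entrywise, both $r_k(P), c_k(P) \in [0,1]$, and equality $z=1$ would force one of them to be $0$ and the other $1$, which would make $P$ — hence $A$ — already balanced on that coordinate in a degenerate way that cannot occur for a nontrivial iterate of a balanceable matrix; in any case the bound $-\log(1-z)\geq z$ still applies.

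The second step is to observe that the greedy rule maximizes $|\sqrt{r_k(A)} - \sqrt{c_k(A)}|$, equivalently $(\sqrt{r_k(P)} - \sqrt{c_k(P)})^2$ (dividing through by $\sum_{ij}A_{ij}$ does not change the argmax). Hence the chosen coordinate's squared-root-difference is at least the average over all coordinates:
\begin{align*}
\left(\sqrt{r_k(P)} - \sqrt{c_k(P)}\right)^2
\geq
\frac{1}{n}\sum_{\ell=1}^n \left(\sqrt{r_\ell(P)} - \sqrt{c_\ell(P)}\right)^2
=
\frac{2}{n}\Hell^2\big(r(P),c(P)\big),
\end{align*}
using the identity~\eqref{eq-pot:hell}. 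Combining the two displays gives $\Phi(x) - \Phi(x') \geq \tfrac{2}{n}\Hell^2(r(P),c(P))$.

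The third step is to invoke Proposition~\ref{prop:hell-lb}. Its hypotheses are that $\Phi(x) \leq \Phi(\zero)$ and $A$ is not $\eps$-balanced. The latter is assumed in the lemma statement; the former holds for any Osborne iterate since the potential starts at $\Phi(\zero)$ and is non-increasing by Lemma~\ref{lem:pot-dec} — but to be safe I would state the lemma's conclusion under the same standing assumption $\Phi(x)\leq\Phi(\zero)$ (which is how it will be applied in \S\ref{sec:greedy}), or simply note that every iterate satisfies it. Applying Proposition~\ref{prop:hell-lb} yields
\begin{align*}
\Phi(x) - \Phi(x')
\geq
\frac{2}{n}\cdot\frac{1}{8}\left(\frac{\Phi(x)-\Phi^*}{\diamG\log\condK}\vee\eps\right)^2
=
\frac{1}{4n}\left(\frac{\Phi(x)-\Phi^*}{\diamG\log\condK}\vee\eps\right)^2,
\end{align*}
which is exactly the claim. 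I don't anticipate a genuine obstacle here — the lemma is essentially a one-line assembly of Lemma~\ref{lem:pot-dec}, the greedy-is-at-least-average observation, and Proposition~\ref{prop:hell-lb}. The only point requiring a moment of care is the legitimacy of the $-\log(1-z)\geq z$ step, i.e. confirming $z<1$ (or that the bound degenerates harmlessly), and making sure the non-$\eps$-balanced hypothesis is what licenses the Proposition~\ref{prop:hell-lb} call; both are routine.
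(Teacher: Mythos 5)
Your proposal is correct and matches the paper's proof essentially line for line: Lemma~\ref{lem:pot-dec} plus $-\log(1-z)\geq z$, then the greedy-exceeds-average step via~\eqref{eq-pot:hell}, then Proposition~\ref{prop:hell-lb} with the $\tfrac{2}{n}\cdot\tfrac18=\tfrac{1}{4n}$ bookkeeping. Your side remarks (checking $z<1$ and noting that $\Phi(x)\leq\Phi(\zero)$ holds for every Osborne iterate, which licenses Proposition~\ref{prop:hell-lb}) are exactly the caveats the paper handles in the surrounding text.
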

\begin{proof}
	Using in order Lemma~\ref{lem:pot-dec}, the inequality $-\log(1 - z) \geq z$ which holds for any $z \in \R$, the definition of Greedy Osborne, and then Proposition~\ref{prop:hell-lb},
	\begin{align}
	\Phi(x) - \Phi(x')
	&= 
	- \log (1 - \left( \sqrt{r_k(P)} - \sqrt{c_k(P)})^2 \right) 
	\label{eq-lpdg:1}
	\\ &\geq 
	\left( \sqrt{r_k(P)} - \sqrt{c_k(P)} \right)^2
	\label{eq-lpdg:2}
	\\ &\geq
	\frac{1}{n} \sum_{\ell=1}^n \left( \sqrt{r_{\ell}(P)} - \sqrt{c_{\ell}(P)} \right)^2
	\label{eq-lpdg:3}
	\\ &\geq
	\frac{1}{4n} \left( \frac{\Phi(x) - \Phi^*}{\diamG \log \condK} \vee \eps \right)^2. 
	\label{eq-lpdg:4}
	\end{align}
\end{proof}

\begin{proof}[Proof of Theorem~\ref{thm:osb-greedy}]
	Let $\xzero = \zero, \xone,\xtwo,\dots$ denote the iterates, and let $\tau$ be the first iteration for which $\diag(e^x)K\diag(e^{-x})$ is $\eps$-balanced. 
	Since the number of arithmetic operations per iteration is amortized to $O(n)$ by Remark~\ref{rem:greedy-amortized}, it suffices to show that the number of iterations $\tau$ is at most $O(n\eps^{-1}(\eps^{-1} \wedge \diamG) \log \condK)$. Now by Lemma~\ref{lem:pot-dec:greedy}, for each $t \in \{0,1,\dots,\tau-1\}$ we have
	\begin{align}
	\Phi(\xt) - \Phi(\xtp)
	\geq
	\frac{1}{4n} \left( \frac{\Phi(\xt) - \Phi^*}{\diamG \log \condK} \vee \eps \right)^2.
	\label{eq:greedy}
	\end{align}
	
	\paragraph*{Case 1: $\boldsymbol{\eps^{-1} \leq \diamG}$.} By the second bound in~\eqref{eq:greedy}, the potential decreases by at least $\eps^2/4n$ in each iteration. Since the potential is initially at most $\log \condK$ by Lemma~\ref{lem:pot-init} and is always nonnegative by definition, the total number of iterations is at most
	\begin{align}
	\tau \leq \frac{\log \condK}{\eps^2/4n} = \frac{4n \log \condK}{\eps^2}. 
	\label{eq-pf-greedy:c1}
	\end{align}
	
	\paragraph*{Case 2: $\boldsymbol{\eps^{-1} > \diamG}$.} For shorthand, denote $\alpha := \eps \diamG \log \condK$. Let $\tau_1$ be the first iteration for which the potential $\Phi(\xt) - \Phi^* \leq \alpha$, and let $\tau_2 := \tau - \tau_1$ denote the number of remaining iterations. By an identical argument as in case 1, 
	\begin{align}
	\tau_2
	\leq
	\frac{\alpha}{\eps^2/4n}
	=
	\frac{4n \diamG \log \condK}{\eps}.
	\label{eq-pf-greedy:c2-2}
	\end{align}
	To bound $\tau_1$, partition this phase further as follows. Let $\phi_0 := \log \condK$ and $\phi_i := \phi_{i-1}/2$ for $i = 1, 2, \dots$ until $\phi_N \leq \alpha$. Let $\tau_{1,i}$ be the number of iterations starting from when the potential is first no greater than $\phi_{i-1}$ and ending when it no greater than $\phi_{i}$. In the $i$-th subphase, the potential drops by at least $(\tfrac{\phi_i}{\diamG \log \condK})^2/4n$ per iteration by~\eqref{eq:greedy}. Thus
	\begin{align}
	\tau_{1,i} 
	\leq 
	\frac{\phi_{i-1} - \phi_i}{(\tfrac{\phi_i}{\diamG \log \condK})^2/4n}
	=
	\frac{4n \diamG^2 \log^2 \condK}{\phi_i}.
	\label{eq-pf-greedy:c2-1i}
	\end{align}
	Since $\sum_{i=1}^N \tfrac{1}{\phi_i} = \tfrac{1}{\phi_N} \sum_{j=0}^{N-1} 2^{-j} \leq \tfrac{2}{\phi_N} \leq \tfrac{4}{\alpha}$, thus
	\begin{align}
	\tau_1 
	=
	\sum_{i=1}^N \tau_{1,i}
	\leq 
	\frac{16n\diamG^2 \log \condK^2}{\alpha}
	=
	\frac{16n \diamG \log \condK}{\eps}.
	\label{eq-pf-greedy:c2-1}
	\end{align}
	By~\eqref{eq-pf-greedy:c2-2} and~\eqref{eq-pf-greedy:c2-1}, the total number of iterations is at most $\tau = \tau_1 + \tau_2 \leq 20n \diamG \eps^{-1} \log \condK$. 
\end{proof}

\section{Random Osborne converges quickly}\label{sec:rand}

Here we show that Random Osborne has runtime that is (i) near-linear in the input sparsity $m$; and (ii) also linear in the inverse accuracy $\eps^{-1}$ for well-connected sparsity patterns. See \S\ref{ssec:intro:contributions} for further discussion of the result, and \S\ref{sssec:intro:overview-rand} for a proof sketch.

\begin{theorem}[Convergence of Random Osborne]\label{thm:osb-rand}
	Given a balanceable matrix $K \in \Rpnn$ and accuracy $\eps > 0$, Random Osborne 
	solves $\ABALKeps$
	in $T$ arithmetic operations, where
	\begin{itemize}
		\item (Expectation guarantee.) $\E[T] = O(\tfrac{m}{\eps} (\tfrac{1}{\eps} \wedge \diamG) \log \condK)$.
		\item (H.p. guarantee.) There exists a universal constant $c > 0$ such that for all $\delta > 0$, 
		\[
		\Prob\left( T \leq c\left(\tfrac{m}{\eps} (\tfrac{1}{\eps} \wedge \diamG)  \log \condK \logdel \right)\right) \geq 1 - \delta.
		\]
	\end{itemize}
\end{theorem}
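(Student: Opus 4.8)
The plan is to mimic the Greedy Osborne argument of Theorem~\ref{thm:osb-greedy}, but replacing the deterministic per-iteration decrease by its expectation and then converting the expected-decrease statement into an expected and high-probability bound on the number of iterations via martingale tools. Concretely, let $\xzero = \zero, \xone, \xtwo, \dots$ denote the (random) iterates, let $\cF_t$ be the $\sigma$-algebra generated by the first $t$ update coordinates, and let $\tau$ be the (random) stopping time equal to the first iteration for which the scaling is $\eps$-balanced. First I would record the key expected-decrease inequality: since Random Osborne draws $k$ uniformly from $[n]$, combining Lemma~\ref{lem:pot-dec}, the elementary bound $-\log(1-z) \geq z$, the identity~\eqref{eq-pot:hell}, and Proposition~\ref{prop:hell-lb} gives, on the event $\{t < \tau\}$,
\begin{align*}
\E\bigl[\Phi(\xt) - \Phi(\xtp) \,\big|\, \cF_t\bigr]
\;\geq\;
\frac{2}{n}\,\Hell^2\!\bigl(r(P^{(t)}), c(P^{(t)})\bigr)
\;\geq\;
\frac{1}{4n}\left( \frac{\Phi(\xt) - \Phi^*}{\diamG \log \condK} \vee \eps \right)^2,
\end{align*}
which is precisely the Greedy bound of Lemma~\ref{lem:pot-dec:greedy} but now in conditional expectation. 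I would package this as a lemma (presumably the paper's own ``expected potential decrease'' lemma).

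Next I would bound the expected number of iterations $\E[\tau]$ by the same two-phase / geometric-subphase bookkeeping as in the proof of Theorem~\ref{thm:osb-greedy}, but made rigorous for a random process using Doob's Optional Stopping Theorem. The idea: the process $\Phi(\xt)$ is a nonnegative supermartingale whose expected one-step drop is at least $\eps^2/(4n)$ while $t < \tau$ (in Case $\eps^{-1}\leq\diamG$), so $M_t := \Phi(x^{(t\wedge\tau)}) + (t\wedge\tau)\cdot \eps^2/(4n)$ is a supermartingale; applying Optional Stopping and using $\Phi(\xzero) - \Phi^* \leq \log\condK$ (Lemma~\ref{lem:pot-init}) together with $\Phi \geq \Phi^*$ yields $\E[\tau]\cdot\eps^2/(4n) \leq \log\condK$, i.e., $\E[\tau] = O(n\eps^{-2}\log\condK)$. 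For the well-connected case $\eps^{-1}>\diamG$ one splits at the hitting time $\tau_1$ of the sublevel set $\{\Phi \leq \eps\diamG\log\condK\}$ and subdivides $[\,0,\tau_1\,]$ into the same geometrically-shrinking potential windows $\phi_i$; on each window the expected drop per step is $\geq (\phi_i/(\diamG\log\condK))^2/(4n)$, so the same Optional Stopping argument bounds the expected length of each window, and summing the geometric series $\sum 1/\phi_i$ gives $\E[\tau] = O(n\diamG\eps^{-1}\log\condK)$. For the high-probability iteration bound I would instead use a martingale Chernoff / exponential-supermartingale argument: construct $e^{\lambda(\Phi(x^{(t\wedge\tau)}) + c(t\wedge\tau))}$-type supermartingales (or apply a standard Freedman/Azuma-type bound for stopping times with a guaranteed expected drift) to conclude $\Prob(\tau > c'\cdot(\text{expected bound})\cdot\logdel) \leq \delta$; the exponential tails match the claimed $\logdel$ factor, as the paper notes in Remark~\ref{rem:prob-subexp}.

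Finally I would convert the iteration bound into the runtime bound $T$. By Observation~\ref{obs:osb-rand:iter} each iteration costs $2m/n$ arithmetic operations in expectation, conditionally on the past. The subtlety the paper already flags is that $\tau$ is not independent of the per-iteration costs, so $T = \sum_{t=0}^{\tau-1} c_t$ (with $c_t$ the cost of iteration $t$) is a stopped sum: for the expectation bound I would invoke a Wald-type identity — $\E[T] = \E\bigl[\sum_{t<\tau} \E[c_t \mid \cF_t]\bigr] = (2m/n)\,\E[\tau]$, valid since $\{\tau > t\}\in\cF_t$ and $\E[c_t\mid\cF_t]=2m/n$ — which immediately gives the claimed $\E[T] = O(\tfrac{m}{\eps}(\tfrac1\eps\wedge\diamG)\log\condK)$. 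For the high-probability runtime bound I would condition on the h.p. event $\{\tau \leq N\}$ with $N$ the h.p. iteration bound, and then bound $\sum_{t=0}^{N-1} c_t$ by a Chernoff bound for a sum of conditionally-bounded nonnegative increments with controlled conditional means (each $c_t \leq 2n$ deterministically, mean $2m/n$), absorbing everything into the universal constant $c$ and the $\logdel$ factor via a union bound. The main obstacle I anticipate is the careful martingale bookkeeping in the high-probability arguments — in particular choosing the right exponential supermartingale for the stopped potential process so that the deviation bound has exponentially (not merely polynomially) decaying tails and cleanly yields the $\logdel$ dependence — together with making the stopped-sum runtime argument rigorous without assuming independence; the expected-value parts are essentially the Greedy proof wrapped in Optional Stopping and Wald.
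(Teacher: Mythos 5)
Your proposal matches the paper's own proof essentially step for step: the same expected-decrease lemma (Lemma~\ref{lem:pot-dec:rand}), the same two-phase and geometric-subphase bookkeeping from the Greedy analysis made stochastic via Doob's Optional Stopping Theorem and a martingale Chernoff bound (packaged in the paper as Lemma~\ref{lem:prob}), and the same conversion to runtime via a Wald-type identity (Lemma~\ref{lem:wald}) plus a Chernoff bound on the stopped sum of per-iteration costs. The only items left implicit in your sketch---verifying the bounded-difference condition needed for the Chernoff step and the per-subphase union bound in the well-connected high-probability case---are exactly the routine details the paper fills in.
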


As described in the proof overview in \S\ref{sssec:intro:overview-greedy}, the core argument is nearly identical to the analysis of Greedy Osborne in \S\ref{sec:greedy}. Below, we detail the additional probabilistic nuances and describe how to overcome them. Remaining details for the proof of Theorem~\ref{thm:osb-rand} are deferred to Appendix~\ref{app:rand-pf}.

\subsection{Bounding the number of iterations}\label{sssec:conv:decrease}

Analogous to the proof of Greedy Osborne (c.f. Lemma~\ref{lem:pot-dec:greedy}), the key lemma is that each iteration significantly decreases the potential. The statement and proof are nearly identical. The only difference in the statement of the lemma is that for Random Osborne, this improvement is in \emph{expectation}.

\begin{lemma}[Potential decrease of Random Osborne]\label{lem:pot-dec:rand}
	Consider any $x \in \Rn$ for which the corresponding scaling $A := \diag(e^x) K \diag(e^{-x})$ is not $\eps$-balanced. If $x'$ is the next iterate obtained from a Random Osborne update, then
	\begin{align*}
	\E \left[  \Phi(x) - \Phi(x')  \right]
	\geq
	\frac{1}{4n} \left( \frac{\Phi(x) - \Phi^*}{\diamG \log \condK} \vee \eps \right)^2,
	\end{align*}
	where the expectation is over the algorithm's uniform random choice of update coordinate from $[n]$.
\end{lemma}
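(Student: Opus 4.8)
The plan is to mirror the proof of Lemma~\ref{lem:pot-dec:greedy} exactly, replacing the use of the greedy selection rule (the step passing from~\eqref{eq-lpdg:2} to~\eqref{eq-lpdg:3}) with the observation that the uniform random choice of update coordinate realizes the average of the per-coordinate potential decreases. Concretely, let $P := A/(\sum_{ij} A_{ij})$ denote the normalization of the current scaling, and let $k \in [n]$ be the uniformly random update coordinate. Taking expectations over $k$ in the identity of Lemma~\ref{lem:pot-dec} and then applying $-\log(1-z) \geq z$ pointwise gives
\begin{align*}
\E\left[ \Phi(x) - \Phi(x') \right]
= \frac{1}{n} \sum_{k=1}^n \left( -\log\left(1 - \left(\sqrt{r_k(P)} - \sqrt{c_k(P)}\right)^2\right) \right)
\geq \frac{1}{n} \sum_{k=1}^n \left(\sqrt{r_k(P)} - \sqrt{c_k(P)}\right)^2 .
\end{align*}

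Next, I would invoke the identity~\eqref{eq-pot:hell}, namely $\tfrac{1}{n}\sum_{k=1}^n (\sqrt{r_k(P)} - \sqrt{c_k(P)})^2 = \tfrac{2}{n}\Hell^2(r(P), c(P))$, to rewrite the right-hand side as $\tfrac{2}{n}\Hell^2(r(P), c(P))$. Since the first iterate is initialized to $\zero$ and the potential is monotonically non-increasing by Lemma~\ref{lem:pot-dec}, the hypothesis $\Phi(x) \leq \Phi(\zero)$ holds for any iterate of Random Osborne; combined with the standing assumption that $A$ is not $\eps$-balanced, this lets me apply Proposition~\ref{prop:hell-lb}, which yields $\Hell^2(r(P), c(P)) \geq \tfrac{1}{8}\left( \tfrac{\Phi(x) - \Phi^*}{\diamG \log \condK} \vee \eps \right)^2$. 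Substituting this in, the $\tfrac{2}{n}\cdot\tfrac{1}{8} = \tfrac{1}{4n}$ bookkeeping gives exactly the claimed bound.

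There is essentially no obstacle here: every ingredient is already in place. The only point requiring minor care is to note that $-\log(1-z) \geq z$ is applied inside the sum \emph{before} taking any further bounds, which is legitimate since each summand $\left(\sqrt{r_k(P)} - \sqrt{c_k(P)}\right)^2$ lies in $[0,1)$ (each marginal is a probability distribution, so $r_k(P), c_k(P) \in [0,1]$ and hence $(\sqrt{r_k(P)} - \sqrt{c_k(P)})^2 \leq \max(r_k(P), c_k(P)) < 1$, the strictness being irrelevant since the inequality $-\log(1-z)\geq z$ holds for all $z < 1$, and in fact for all real $z$ when interpreting the left side as $+\infty$ for $z \geq 1$). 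One should also record explicitly that the hypothesis of Proposition~\ref{prop:hell-lb}, namely $\Phi(x) \leq \Phi(\zero)$, is inherited along the Osborne trajectory, exactly as remarked before the statement of that proposition. With these remarks in place the proof is a three-line chain of (in)equalities identical in structure to~\eqref{eq-lpdg:1}--\eqref{eq-lpdg:4}, differing only in that~\eqref{eq-lpdg:2}--\eqref{eq-lpdg:3} is replaced by an equality in expectation rather than the greedy-vs-average inequality.
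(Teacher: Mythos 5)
Your proof is correct and is essentially the paper's own argument: the paper proves this lemma by repeating the Greedy Osborne proof with the expectation over the uniform coordinate replacing the greedy-vs-average step (which becomes an equality), then invoking Proposition~\ref{prop:hell-lb} exactly as you do. Your extra remarks (the range of each summand and that $\Phi(x)\leq\Phi(\zero)$ holds along the trajectory) are consistent with the paper's own discussion preceding Proposition~\ref{prop:hell-lb}.
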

\begin{proof}
	The proof is identical to the proof for Greedy Osborne (c.f. Lemma~\ref{lem:pot-dec:greedy}), with only two minor differences. The first is that~\eqref{eq-lpdg:1} and~\eqref{eq-lpdg:2} are in expectation. The second is that~\eqref{eq-lpdg:3} holds with equality by definition of the Random Osborne algorithm.
\end{proof}

Lemma~\ref{lem:pot-init} shows that the potential is initially bounded, and Lemma~\ref{lem:pot-dec:rand} shows that each iteration significantly decreases the potential in expectation. In the analysis of Greedy Osborne, this potential drop is deterministic, and so we immediately concluded that the number of iterations is at most the initial potential divided by the per-iteration decrease (see~\eqref{eq-pf-greedy:c1} in \S\ref{sec:greedy}). Lemma~\ref{lem:prob} below shows that essentially the same bound holds in our stochastic setting. Indeed, the expectation bound is exactly this quantity (plus one), and the h.p. bound is the same up to a small constant.

\begin{lemma}[Per-iteration expected improvement implies few iterations]\label{lem:prob}
	Let $A > a$ and $h > 0$.
	Let $\{Y_t\}_{t \in \N_0}$ be a stochastic process adapted to a filtration $\{\cF_t\}_{t \in \N_0}$ such that $Y_0 \leq A$ a.s., 
	each difference $Y_{t-1} - Y_t$ is bounded within $[0,2(A-a)]$ a.s., and
	\begin{align}
	\E\left[ Y_t - Y_{t+1} \, | \, \cF_{t}, \, Y_{t} \geq a \right] \geq h
	\label{eq-lem:prob:iter}
	\end{align}
	for all $t \in \N_0$. Then the stopping time $\tau := \min \{t \in \N_0 \, : \, Y_t \leq a \}$ satisfies
	\begin{itemize}
		\item (Expectation bound.) $\E[\tau] \leq \tfrac{A - a}{h} + 1$.
		\item (H.p. bound.) 
		For all $\delta \in (0,1/e)$, it holds that $\Prob(\tau \leq \tfrac{6(A-a)}{h} \logdel ) \geq 1 - \delta$.
	\end{itemize}
\end{lemma}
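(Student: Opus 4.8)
\textbf{Proof proposal for Lemma~\ref{lem:prob}.}

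The plan is to reduce both claims to a standard supermartingale argument by tracking the ``potential minus a linear drift'' process. Define the stopped process $Z_t := Y_{t \wedge \tau} + h\,(t \wedge \tau)$. The hypothesis~\eqref{eq-lem:prob:iter} says that on the event $\{t < \tau\}$ (equivalently $Y_t \geq a$, after noting $Y$ stays above $a$ before $\tau$), the conditional expected one-step change of $Y_{t\wedge\tau}$ is at most $-h$, which exactly cancels the deterministic $+h$ increment; on $\{t \geq \tau\}$ the process is frozen. Hence $\{Z_t\}$ is a supermartingale with $Z_0 = Y_0 \leq A$.

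For the expectation bound, first I would argue $\E[\tau] < \infty$: since each step before $\tau$ decreases $\E[Y]$ by at least $h > 0$ while $Y$ stays in a bounded range (it cannot drop below $a - 2(A-a)$ since the last step is bounded by $2(A-a)$), a routine argument (e.g., Markov's inequality applied to $Y_{t} \geq a$, or a direct summation) gives $\Prob(\tau > t) \to 0$ fast enough. Then apply the supermartingale property together with Fatou/monotone convergence to get $\E[Y_\tau] + h\,\E[\tau] = \lim_t \E[Z_{t\wedge\tau}] \leq \E[Z_0] \leq A$ (this is essentially Doob's optional stopping; the integrability needed is supplied by the a.s.\ bounds on the increments plus finiteness of $\E[\tau]$). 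Since $Y_\tau \geq a - 2(A-a)$ wait---more simply, one stops one step \emph{before} crossing or uses $Y_{\tau - 1} \geq a$ and a single bounded step, so $\E[Y_\tau] \geq a - 2(A-a)$; rearranging gives $\E[\tau] \leq (A - \E[Y_\tau])/h \leq (A - a + 2(A-a))/h$. To get the sharper $\tfrac{A-a}{h} + 1$ claimed, I would instead note $Y_{\tau} \geq a - (Y_{\tau-1} - Y_\tau) $ doesn't directly help; the clean route is to observe $\E[Y_{\tau \wedge t}] \geq a$ for $t < \tau$-type reasoning, or simply: run the drift argument up to time $\tau - 1$ where $Y \geq a$, getting $\E[\tau - 1] \leq (A - a)/h$, i.e.\ $\E[\tau] \leq (A-a)/h + 1$. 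This last packaging is the cleanest and matches the statement.

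For the high-probability bound I would use an exponential supermartingale (martingale Chernoff / Azuma-type). Since the increments $Y_{t-1} - Y_t \in [0, 2(A-a)]$ are bounded and have conditional mean $\geq h$ before $\tau$, the process $W_t := \exp\!\big(\lambda (Y_{t\wedge\tau} - Y_0 + h\,(t\wedge\tau))\big)$ is, for a suitable small $\lambda > 0$ chosen in terms of $h/(A-a)$, a supermartingale (using $\E[e^{\lambda X}] \leq e^{\lambda \E[X] + \lambda^2 c^2/2}$ for $X$ bounded in an interval of length $c$, with the $+h$ drift absorbing the mean term and $\lambda$ small enough that the variance term is dominated). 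Then $\Prob(\tau > t) \leq \Prob(Y_{t} \geq a, \tau > t) \leq \Prob(W_t \geq e^{\lambda(a - A + ht)})$, and Markov's inequality with $\E[W_t] \leq 1$ gives geometric decay $\Prob(\tau > t) \leq e^{-\lambda(ht - (A-a))}$. Setting $t = \tfrac{6(A-a)}{h}\logdel$ and plugging in the explicit $\lambda = \Theta(h/(A-a))$ yields $\Prob(\tau > t) \leq \delta$ after checking the constant $6$ suffices for $\delta < 1/e$.

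The main obstacle I anticipate is bookkeeping the constants and the exact form of the exponential supermartingale inequality so that the clean constant $6$ and the clean additive $+1$ come out, rather than something messier; the conceptual content (supermartingale + optional stopping for the expectation, exponential supermartingale + Markov for the tail) is routine. A secondary subtlety is justifying optional stopping / the exchange of limit and expectation, which requires first establishing $\E[\tau] < \infty$ (or at least the needed integrability), and being careful that the conditioning in~\eqref{eq-lem:prob:iter} is exactly what is needed to make the stopped, drift-corrected process a genuine supermartingale with respect to $\{\cF_t\}$.
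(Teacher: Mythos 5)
Your expectation bound is essentially the paper's own argument: the paper likewise sets $Z_t := Y_t + ht$, notes the stopped process is a supermartingale, and applies Doob's optional stopping at time $\tau-1$ (where $Y_{\tau-1} \geq a$ still holds) to get $\E[\tau] \leq \tfrac{A-a}{h} + 1$; your final ``stop at $\tau-1$'' packaging is exactly this, and the integrability caveat you flag is handled the same way (bounded increments).

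The high-probability bound, however, has a genuine gap. First, as literally defined, $W_t = \exp\bigl(\lambda(Y_{t\wedge\tau} - Y_0 + h(t\wedge\tau))\bigr)$ is not a supermartingale for any $\lambda>0$: the conditional mean of the exponent's increment is only $\leq 0$ (the $+h$ drift exactly cancels the $\geq h$ expected decrease), so nothing is left to absorb the positive correction $\lambda^2 c^2/2$ from the Hoeffding-type bound you invoke. More importantly, even after repairing this by shrinking the drift to $h - \lambda c^2/2$, the two-sided inequality $\E[e^{\lambda X}] \leq e^{\lambda \E[X] + \lambda^2 c^2/2}$ with $c = 2(A-a)$ yields a per-step decay rate of at most $h^2/(2c^2)$ (optimize $\lambda(h-\lambda c^2/2)$ in $\lambda$), hence a bound of the form $\tau = O\bigl(\tfrac{(A-a)^2}{h^2}\log\tfrac{1}{\delta}\bigr)$ --- quadratically worse than the claimed $\tfrac{6(A-a)}{h}\log\tfrac{1}{\delta}$ in the relevant regime $h \ll A-a$ (here $h = \Theta(\eps^2/n)$ while $A-a = \log\condK$). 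The ``check that the constant $6$ suffices'' step you defer would therefore fail: no choice of $\lambda$ rescues an Azuma--Hoeffding-style argument. The missing ingredient is the one-sidedness of the increments: since $Y_{t-1}-Y_t \in [0,2(A-a)]$ with conditional mean $\geq h$, the sum behaves like a Binomial with mean $\mu \geq Nh/(2(A-a))$, whose lower tail is $e^{-\Delta^2\mu/2}$ (multiplicative Chernoff), not Gaussian in $c$. This is how the paper proceeds: it fixes $N = \lceil \tfrac{6(A-a)}{h}\log\tfrac{1}{\delta}\rceil$, uses $Y_0 \leq A$ to get $\{\tau > N\} \subseteq \{\sum_{t=1}^N (Y_{t-1}-Y_t) < A-a\}$, normalizes $X_t := (Y_{t-1}-Y_t)/(2(A-a)) \in [0,1]$, and applies the martingale multiplicative Chernoff lower tail (Lemma~\ref{lem:chernoff}) with a small stochastic-domination step to handle $\E[X_t \mid \cF_{t-1}] \geq h/(2(A-a))$. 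If you prefer to keep your exponential-supermartingale phrasing, replace Hoeffding's lemma by the one-sided MGF bound $e^{-\lambda D} \leq 1 - (1-e^{-\lambda B})D/B$, valid for $D \in [0,B]$, which recovers the Chernoff rate; Remark~\ref{rem:prob-subexp} already signals that the correct tail is sub-exponential (Poisson-like), not sub-Gaussian.
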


The expectation bound in Lemma~\ref{lem:prob} is proved using Doob's Optional Stopping Theorem, and the h.p. bound using Chernoff bounds; details are deferred to Appendix~\ref{app:pfs-prob}.

\begin{remark}[Sub-exponential concentration]\label{rem:prob-subexp}
	Lemma~\ref{lem:prob} shows that the upper tail of $\tau$ decays at a sub-exponential rate. This concentration cannot be improved to a sub-Gaussian rate: indeed, consider $X_t$ i.i.d. Bernoulli with parameter $h \in (0,1)$, $Y_t = 1 - \sum_{i=1}^t X_i$, $A = 1$, and $a = 0$. Then $\Prob(\tau \leq N) = 1 - \Prob(X_1 = \dots = X_N = 0) = 1 - (1-h)^N$ which is $\approx 1 - \delta$ when $N \approx \tfrac{1}{h} \logdel$. 
\end{remark}

\subsection{Bounding the final runtime}\label{sssec:conv:runtime}

The key reason that Random Osborne is faster than Greedy Osborne (other than bit complexity) is that its per-iteration runtime is faster for sparse matrices: it is $O(m/n)$ by Observation~\ref{obs:osb-rand:iter} rather than $O(n)$.
In the deterministic setting, the final runtime is at most the product of the per-iteration runtime and the number of iterations (c.f. \S\ref{sec:greedy}). 
However, obtaining a final runtime bound from a per-iteration runtime and an iteration-complexity bound requires additional tools in the stochastic setting. 
A similar h.p. bound follows from a standard Chernoff bound. But proving an expectation bound is more nuanced.
The natural approach is Wald's equation, which states the the sum of a random number $\tau$ of i.i.d. random variables $Z_1,\dots,Z_\tau$ equals $\E\tau \E Z_1$, so long as $\tau$ is independent from $Z_1, \dots, Z_{\tau}$~\citep[Theorem 4.1.5]{Durrett10}. However, in our setting the per-iteration runtimes and the number of iterations are \emph{not} independent. Nevertheless, this dependence is weak enough for the identity to still hold. Formally, we require the following minor technical modifications of the per-iteration runtime bound in Observation~\ref{obs:osb-rand:iter} and Wald's equation. 

\begin{lemma}[Per-iteration runtime of Random Osborne, irrespective of history]\label{lem:osb-rand:iter}
	Let $\cF_{t-1}$ denote the sigma-algebra generated by the first $t-1$ iterates of Random Osborne. Conditional on $\cF_{t-1}$, the $t$-th iteration requires $O(m/n)$ arithmetic operations in expectation.
\end{lemma}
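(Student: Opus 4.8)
The plan is to reduce this to Observation~\ref{obs:osb-rand:iter} by exploiting the fact that the only randomness consumed by the $t$-th iteration of Random Osborne is the choice of update coordinate $k$, which by construction is drawn uniformly from $[n]$ \emph{independently} of all previous iterations, and in particular independently of $\cF_{t-1}$. Thus conditioning on $\cF_{t-1}$ does not change the distribution of $k$, and so the conditional expectation of the per-iteration work equals its unconditional expectation.

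To make this precise, I would first establish that the number of arithmetic operations performed in an Osborne update on coordinate $k$ is a deterministic function of the pair $(K,k)$ alone --- namely $\Theta(\nnz(\text{row } k) + \nnz(\text{col } k))$, where $\nnz(\text{row } k)$ and $\nnz(\text{col } k)$ denote the number of nonzeros in the $k$-th row and column of $K$ --- with \emph{no} dependence on the history-dependent iterate $x^{(t-1)}$. Indeed, the update recomputes $r_k(A) = \sum_j e^{x_k - x_j}K_{kj}$ and $c_k(A) = \sum_i e^{x_i - x_k}K_{ik}$ (each a sum over the nonzeros of a single row or column) and then modifies $x_k$; and the quantities needed to test $\eps$-balancedness --- the row and column sums, the total mass $\sum_{ij}A_{ij}$, and $\|r(A)-c(A)\|_1$ --- can be maintained incrementally, since changing $x_k$ affects only the $O(\nnz(\text{row } k) + \nnz(\text{col } k))$ of these quantities associated with row/column $k$ and its neighbors. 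So, after an $O(m)$ one-time preprocessing step, the cost of iteration $t$ is $\Theta(\nnz(\text{row } k_t) + \nnz(\text{col } k_t))$, a function of $K$ and $k_t$ only.

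Given this, the lemma follows by averaging: conditionally on $\cF_{t-1}$, the coordinate $k_t$ is uniform on $[n]$, so
\[
\E\big[\,\#\{\text{ops in iteration } t\}\;\big|\;\cF_{t-1}\big] \;=\; \Theta\!\left(\frac{1}{n}\sum_{k=1}^n\big(\nnz(\text{row } k) + \nnz(\text{col } k)\big)\right) \;=\; \Theta\!\left(\frac{2m}{n}\right) \;=\; O(m/n),
\]
using $\sum_{k=1}^n \nnz(\text{row } k) = \sum_{k=1}^n \nnz(\text{col } k) = m$. The only point requiring care --- and the nearest thing to an ``obstacle'' --- is the first step: verifying that the per-iteration work genuinely depends only on the sparsity pattern of $K$ and the sampled coordinate, not on the current iterate. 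This hinges on implementing the $\eps$-balancedness test incrementally rather than recomputing it from scratch (which would cost $\Theta(m)$ per iteration). Once that is in place, independence of $k_t$ from $\cF_{t-1}$ makes the conditional bound identical to the unconditional one in Observation~\ref{obs:osb-rand:iter}.
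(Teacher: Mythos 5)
Your proposal is correct and matches the paper's own (omitted) argument, which simply notes that the per-iteration cost of an Osborne update on coordinate $k$ is proportional to the number of nonzeros in the $k$-th row and column of $K$, and that $k$ is drawn uniformly from $[n]$ independently of $\cF_{t-1}$, so the conditional expectation equals the unconditional $O(m/n)$ bound of Observation~\ref{obs:osb-rand:iter}. Your additional point about maintaining the $\eps$-balancedness test incrementally is a sensible implementation detail that the paper leaves implicit, but it does not change the substance of the argument.
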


\begin{lemma}[Minor modification of Wald's equation]\label{lem:wald}
	Let $Z_1, Z_2, \dots$ be i.i.d. nonnegative integrable r.v.'s. Let $\tau$ be an integrable $\mathbb{N}$-valued r.v. satisfying $\E[Z_t | \tau \geq t ] = \E[Z_1]$ for each $t \in \mathbb{N}$. Then $\E[ \sum_{t=1}^{\tau} Z_t] = \E \tau \E Z_1$.
\end{lemma}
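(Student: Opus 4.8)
\textbf{Proof plan for Lemma~\ref{lem:wald} (minor modification of Wald's equation).}

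The plan is to use the standard ``layer-cake'' trick of writing $\sum_{t=1}^{\tau} Z_t = \sum_{t=1}^{\infty} Z_t \1[\tau \geq t]$ and then applying Tonelli's theorem (all terms are nonnegative) to swap the expectation and the sum. This reduces the claim to showing $\E[Z_t \1[\tau \geq t]] = \E[Z_1] \, \Prob(\tau \geq t)$ for each fixed $t$. Here the hypothesis $\E[Z_t \mid \tau \geq t] = \E[Z_1]$ enters directly: by definition of conditional expectation given an event, $\E[Z_t \1[\tau \geq t]] = \E[Z_t \mid \tau \geq t]\,\Prob(\tau \geq t) = \E[Z_1]\,\Prob(\tau \geq t)$, provided $\Prob(\tau \geq t) > 0$ (if it is zero, both sides vanish since $Z_t \geq 0$). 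Summing over $t \geq 1$ and using $\sum_{t \geq 1}\Prob(\tau \geq t) = \E\tau$ gives $\E[\sum_{t=1}^{\tau} Z_t] = \E[Z_1]\,\E\tau$, as desired.

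The only points requiring care are integrability/finiteness bookkeeping. First, Tonelli applies unconditionally to the nonnegative double-indexed family $\{Z_t \1[\tau \geq t]\}$, so the interchange of $\E$ and $\sum$ is always valid, with both sides possibly $+\infty$ a priori. Second, the right-hand side $\E[Z_1]\,\E\tau$ is finite by the stated integrability of $Z_1$ and $\tau$, which then also certifies that the left-hand side is finite; so no separate argument for finiteness of $\sum_{t=1}^\tau Z_t$ is needed. Third, I should note that the events $\{\tau \geq t\}$ need not be independent of $Z_t$ — indeed in our application they are not — so the content of the hypothesis is precisely that the conditional mean $\E[Z_t \mid \tau \geq t]$ still equals the unconditional mean $\E[Z_1]$; this is the single ingredient that replaces the independence assumption in the textbook version (cf.~\citep[Theorem 4.1.5]{Durrett10}).

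I do not anticipate a genuine obstacle here: the proof is a two-line computation once the layer-cake identity is written down, and the hypothesis has been tailored to make the per-layer identity hold. The one stylistic choice is whether to phrase the per-$t$ step via conditional expectation given an event (cleanest) or via $\E[Z_t \1[\tau \geq t]]$ directly; I will use the former, handling the degenerate case $\Prob(\tau \geq t) = 0$ with the remark that then $Z_t \1[\tau \geq t] = 0$ a.s. because $Z_t \geq 0$. (In the application, one takes $Z_t$ to be the — suitably coupled, common-distribution — per-iteration runtime of the $t$-th Random Osborne iteration, with the hypothesis verified through Lemma~\ref{lem:osb-rand:iter}, since conditioning on $\mathcal{F}_{t-1}$ is at least as informative as conditioning on $\{\tau \geq t\}$.)
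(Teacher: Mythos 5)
Your proof is correct and follows essentially the same route as the paper: both decompose $\E[\sum_{t=1}^{\tau} Z_t]$ as $\sum_{t\geq 1}\E[Z_t \1_{\tau \geq t}]$ (the paper via a double sum over the value of $\tau$ swapped by Fubini, you directly via Tonelli on the nonnegative terms), and both then use $\E[Z_t \1_{\tau\geq t}] = \E[Z_t \mid \tau \geq t]\,\Prob(\tau\geq t) = \E[Z_1]\Prob(\tau\geq t)$ together with $\sum_{t\geq 1}\Prob(\tau\geq t)=\E\tau$. Your handling of the degenerate case $\Prob(\tau\geq t)=0$ and the integrability bookkeeping is a harmless refinement of the same argument.
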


The proof of Lemma~\ref{lem:osb-rand:iter} is nearly identical to the proof of Observation~\ref{obs:osb-rand:iter}, and is thus omitted. 
The proof of Lemma~\ref{lem:wald} is a minor modification of the proof of the standard Wald's equation in~\citep{Durrett10}; details in Appendix~\ref{app:pfs-prob}.

\section{Random-Reshuffle Cyclic Osborne converges quickly}\label{sec:cyclic}

Here we show a runtime bound for Random-Reshuffle Cyclic Osborne. 
See \S\ref{ssec:intro:contributions} for further discussion, and \S\ref{sssec:intro:overview-cyc} for a proof sketch.

\begin{theorem}[Convergence of Random-Reshuffle Cyclic Osborne]\label{thm:osb-cyc}
	Given a balanceable matrix $K \in \Rpnn$ and accuracy $\eps > 0$, Random-Reshuffle Cyclic Osborne
	solves $\ABALKeps$
	in $T$ arithmetic operations, where
	\begin{itemize}
		\item (Expectation guarantee.) $\E[T] = O(\tfrac{mn}{\eps} (\tfrac{1}{\eps} \wedge \diamG) \log \condK)$.
		\item (H.p. guarantee.) There exists a universal constant $c > 0$ such that for all $\delta > 0$, 
		\[
		\Prob\left( T \leq c\left(\tfrac{mn}{\eps} (\tfrac{1}{\eps} \wedge \diamG)  \log \condK \logdel \right)\right) \geq 1 - \delta.
		\]
	\end{itemize}
\end{theorem}

A straightforward coupling argument with Random Osborne shows the following per-cycle potential decrease bound for Random-Reshuffle Cyclic Osborne.

\begin{lemma}[Potential decrease of Random-Reshuffle Cyclic Osborne]\label{lem:pot-dec:cyclic}
	Consider any $x \in \Rn$ for which the corresponding scaling $A := \diag(e^x) K \diag(e^{-x})$ is not $\eps$-balanced. Let $x'$ be the iterate obtained from $x$ after a cycle of Random-Reshuffle Cyclic Osborne. Then
	\begin{align*}
	\E \left[  \Phi(x) - \Phi(x')  \right]
	\geq
	\frac{1}{4n} \left( \frac{\Phi(x) - \Phi^*}{\diamG \log \condK} \vee \eps \right)^2,
	\end{align*}
	where the expectation is over the algorithm's random choice of update coordinates.
\end{lemma}
\begin{proof}
	By monotonicity of $\Phi$ w.r.t. Osborne updates (Lemma~\ref{lem:pot-dec}), the expected decrease in $\Phi$ from all $n$ updates in a cycle is at least that from the first update in the cycle. This first update index is uniformly distributed from $[n]$, thus is equivalent to an iteration of Random Osborne. We conclude by applying the per-iteration decrease bound for Random Osborne in Lemma~\ref{lem:pot-dec:rand}.
\end{proof}

The runtime bound for Random-Reshuffle Cyclic Osborne (Theorem~\ref{thm:osb-cyc}) given the expected per-cycle potential decrease (Lemma~\ref{lem:pot-dec:cyclic}) then follows by an identical argument as the runtime bound for Random Osborne (Theorem~\ref{thm:osb-rand}) given that algorithm's expected per-iteration potential decrease (Lemma~\ref{lem:pot-dec:rand}). The straightforward details are omitted for brevity.

\section{Parallelized variants of Osborne converge quickly}\label{sec:par}

Here we show fast runtime bounds for parallelized variants of Osborne's algorithm when given a coloring of $G_K$ (see \S\ref{ssec:prelim:par}). See \S\ref{ssec:intro:contributions} for a discussion of these results, and \S\ref{sssec:intro:overview-block} for a proof sketch.

\begin{theorem}[Convergence of Block Osborne variants]\label{thm:par:all}
	Consider balancing a balanceable matrix $K \in \Rpnn$ to accuracy $\eps > 0$ given a coloring of $G_K$ of size $p$. 
	\begin{itemize}
		\item Greedy Block Osborne solves $\ABALKeps$ in 	$O(\tfrac{p}{\eps} (\tfrac{1}{\eps} \wedge \diamG)\log \condK)$ rounds and $O(\tfrac{mp}{\eps} (\tfrac{1}{\eps} \wedge \diamG)\log \condK)$ total work.
		\item Random Block Osborne solves $\ABALKeps$ in $O(\tfrac{p}{\eps} (\tfrac{1}{\eps} \wedge \diamG) \log \condK)$ rounds and $O(\tfrac{m}{\eps} (\tfrac{1}{\eps} \wedge \diamG) \log \condK)$ total work, in expectation and w.h.p.
		\item Random-Reshuffle Cyclic Block Osborne solves $\ABALKeps$ in $O(\tfrac{p^2}{\eps} (\tfrac{1}{\eps} \wedge \diamG) \log \condK)$ rounds and $O(\tfrac{mp}{\eps} (\tfrac{1}{\eps} \wedge \diamG) \log \condK)$ total work, in expectation and w.h.p.
	\end{itemize} 
\end{theorem}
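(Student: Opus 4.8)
The plan is to run the same potential argument used for the non-parallelized variants, with the single structural change that a Block Osborne update over a color class $S_\ell$ decreases $\Phi$ by the \emph{sum} of the per-coordinate decreases $\sum_{k \in S_\ell}\bigl(\sqrt{r_k(P)}-\sqrt{c_k(P)}\bigr)^2$ (up to the $-\log(1-z)\geq z$ slack), since the exponential of $\Phi$ is separable across the non-adjacent coordinates of $S_\ell$ by Remark~\ref{rem:par:block-cd}. Concretely, I would first record a block analogue of Lemma~\ref{lem:pot-dec}: if $x'$ is obtained from $x$ by a Block Osborne update on $S_\ell$, then $\Phi(x)-\Phi(x') = -\log\bigl(1 - \sum_{k\in S_\ell}(\sqrt{r_k(P)}-\sqrt{c_k(P)})^2\bigr)$, and hence $\Phi(x)-\Phi(x') \geq \sum_{k\in S_\ell}(\sqrt{r_k(P)}-\sqrt{c_k(P)})^2$; the proof is the same short computation, just balancing all rows/columns in $S_\ell$ simultaneously (which is valid precisely because they are pairwise non-adjacent, so the relevant row and column sums do not interact). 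Summing or averaging this over $\ell \in [p]$, and invoking Proposition~\ref{prop:hell-lb} exactly as in Lemma~\ref{lem:pot-dec:greedy}/Lemma~\ref{lem:pot-dec:rand}, gives the key per-round decrease bounds with $1/n$ replaced by $1/p$: for Greedy Block Osborne (picking the block maximizing~\eqref{def:par-greedy}), $\Phi(x)-\Phi(x') \geq \tfrac{1}{4p}\bigl(\tfrac{\Phi(x)-\Phi^*}{\diamG\log\condK}\vee\eps\bigr)^2$ deterministically; for Random Block Osborne (uniform $\ell\in[p]$), the same bound holds in expectation over $\cF_t$.

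Given these per-round bounds, the round-complexity follows verbatim from the arguments already in the paper. For Greedy Block Osborne I would reuse the exact two-case / subphase telescoping computation in the proof of Theorem~\ref{thm:osb-greedy} with $n\to p$, yielding $O\bigl(\tfrac{p}{\eps}(\tfrac1\eps\wedge\diamG)\log\condK\bigr)$ rounds; since one round does $O(m)$ arithmetic operations (every coordinate updated once across all blocks, touching each nonzero of $K$ a constant number of times), the total work is $O\bigl(\tfrac{mp}{\eps}(\tfrac1\eps\wedge\diamG)\log\condK\bigr)$. For Random Block Osborne I would feed the expected per-round decrease into Lemma~\ref{lem:prob} (with $A = \log\condK$, $a$ the appropriate threshold, $h$ the per-round drop, and the increment bound following from monotonicity of $\Phi$ and Lemma~\ref{lem:bal:R}) to get $O\bigl(\tfrac{p}{\eps}(\tfrac1\eps\wedge\diamG)\log\condK\bigr)$ rounds in expectation and w.h.p. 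For the total work, the point is that a Random Block Osborne round over $S_\ell$ costs $O\bigl(\sum_{k\in S_\ell}(\text{nnz in row/col }k)\bigr)$, which over a uniformly random $\ell$ is $O(m/p)$ in expectation (the color classes partition the coordinates, so the expected block size weighted by nnz is $m/p$); multiplying by the expected round count via Lemma~\ref{lem:wald} (again the per-round cost is not independent of the stopping time, but satisfies the conditional-expectation hypothesis as in Lemma~\ref{lem:osb-rand:iter}) gives $O\bigl(\tfrac{m}{\eps}(\tfrac1\eps\wedge\diamG)\log\condK\bigr)$ work, with the w.h.p.\ version from a Chernoff bound.

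For Cyclic Block Osborne I would mirror \S\ref{sec:cyclic}: a cycle consists of $p$ block updates in a fresh uniformly random order, and since Osborne updates only decrease $\Phi$, the per-cycle decrease is at least that of the first $t=\lfloor\sqrt p\rfloor$ block updates; by Lemma~\ref{lem:tv} (with $n\to p$), sampling $\lfloor\sqrt p\rfloor$ blocks without replacement is within total variation $1/2$ of sampling with replacement, so a change-of-measure argument lower-bounds the per-cycle decrease by half the expected decrease of $\lfloor\sqrt p\rfloor$ Random Block Osborne steps, i.e.\ $\gtrsim \tfrac{1}{16\sqrt p}\bigl(\tfrac{\Phi(x')-\Phi^*}{\diamG\log\condK}\vee\eps\bigr)^2$. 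Plugging into Lemma~\ref{lem:prob} gives $O\bigl(\tfrac{p^{3/2}}{\eps}(\tfrac1\eps\wedge\diamG)\log\condK\bigr)$ rounds, and since each cycle does $O(m)$ work, the total work is $O\bigl(\tfrac{m\sqrt p}{\eps}(\tfrac1\eps\wedge\diamG)\log\condK\bigr)$; both in expectation and w.h.p.\ by the same martingale tools. The only genuinely new ingredient is the block separability claim (the generalization of Lemma~\ref{lem:pot-dec}), and the main thing to be careful about is the bookkeeping for the \emph{total work} bounds, namely verifying that across a full cycle/round each nonzero of $K$ is examined only $O(1)$ times and that the expected-size-of-random-block computation correctly yields $m/p$; everything else is a transcription of the earlier proofs with $n$ replaced by $p$ (or $\sqrt p$).
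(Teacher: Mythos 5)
Your proposal is correct and is essentially the paper's own argument: the block potential-decrease lemma via separability over a color class (the paper's Lemma~\ref{lem:pot-dec:greedy-block}; your exact identity $\Phi(x)-\Phi(x') = -\log\bigl(1-\sum_{k\in S_\ell}(\sqrt{r_k(P)}-\sqrt{c_k(P)})^2\bigr)$ is the correct closed form and yields the same lower bound $\sum_{k\in S_\ell}(\sqrt{r_k(P)}-\sqrt{c_k(P)})^2$), the $n\to p$ substitution in the Greedy/Random phase arguments with Lemma~\ref{lem:prob}, Lemma~\ref{lem:wald} and Chernoff for the stochastic bookkeeping, the $\lfloor\sqrt{p}\rfloor$ with/without-replacement coupling for the cyclic variant, and the same work accounting ($O(m/p)$ expected per random round, $\Theta(m)$ per cycle). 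One small nit: a Greedy Block round updates a \emph{single} block, so its $O(m)$ per-round work bound holds simply because the nonzeros incident to that block number at most $m$ (the paper notes the same worst-case bound), not because every coordinate is touched once per round.
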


Note that the h.p. bounds in Theorem~\ref{thm:par:all} have exponentially decaying tails, just as for the non-parallelized variants (c.f., Theorems~\ref{thm:osb-rand} and~\ref{thm:osb-cyc}; see also Remark~\ref{rem:prob-subexp}).

The proof of Theorem~\ref{thm:par:all} is nearly identical to the analysis of the analogous non-parallelized variants in \S\ref{sec:greedy}, \S\ref{sec:rand}, and \S\ref{sec:cyclic} above. For brevity, we only describe the differences. First, we show the rounds bounds. For Greedy and Random Block Osborne, the only difference is that the per-iteration potential decrease is now $n/p$ times larger than in Lemmas~\ref{lem:pot-dec:greedy} and~\ref{lem:pot-dec:rand}, respectively. Below we show this modification for Greedy Block Osborne; an identical argument applies for Random Block Osborne after taking an expectation (the inequality~\eqref{eq:prop:pot-dec:greedy-block} then becomes an equality).

\begin{lemma}[Potential decrease of Greedy Block Osborne]\label{lem:pot-dec:greedy-block}
	Consider any $x \in \Rn$ for which the corresponding scaling $A := \diag(e^x) K \diag(e^{-x})$ is not $\eps$-balanced. If $x'$ is the next iterate obtained from a Greedy Block Osborne update, then
	\begin{align*}
	\Phi(x) - \Phi(x')
	\geq
	\frac{1}{4p} \left( \frac{\Phi(x) - \Phi^*}{\diamG \log \condK} \vee \eps \right)^2.
	\end{align*}
\end{lemma}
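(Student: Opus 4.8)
\textbf{Proof proposal for Lemma~\ref{lem:pot-dec:greedy-block}.} The plan is to mimic the proof of Lemma~\ref{lem:pot-dec:greedy} line by line, with the single structural change that the summand we isolate corresponds to a full \emph{block} $S_\ell$ of non-adjacent coordinates rather than a single coordinate. First I would record the per-block analogue of Lemma~\ref{lem:pot-dec}: because the vertices in $S_\ell$ are pairwise non-adjacent in $G_K$, a simultaneous Osborne update on all coordinates in $S_\ell$ does not interfere across coordinates (as noted in Remark~\ref{rem:par:block-cd}, $e^{\Phi}$ is separable in the variables of $S_\ell$), so the exact potential decrease telescopes into a sum over $k \in S_\ell$. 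Concretely, arguing exactly as in the proof of Lemma~\ref{lem:pot-dec} but applied to each coordinate of the block separately, one gets
\begin{align*}
\Phi(x) - \Phi(x') = -\log\Bigl(1 - \sum_{k \in S_\ell}\bigl(\sqrt{r_k(P)} - \sqrt{c_k(P)}\bigr)^2\Bigr) \geq \sum_{k \in S_\ell}\bigl(\sqrt{r_k(P)} - \sqrt{c_k(P)}\bigr)^2,
\end{align*}
using $-\log(1-z) \geq z$ in the last step. (Here one should double-check that the block update truly decomposes this way; this is where I'd want to verify that the row/column sums on coordinates outside $S_\ell$ are unchanged by the block update and that the within-block contributions simply add, which follows from non-adjacency.)

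Next I would invoke the definition of Greedy Block Osborne~\eqref{def:par-greedy}: the chosen block $\ell^\star$ maximizes $\tfrac{1}{|S_\ell|}\sum_{k \in S_\ell}(\sqrt{r_k(P)} - \sqrt{c_k(P)})^2$, so in particular
\begin{align*}
\sum_{k \in S_{\ell^\star}}\bigl(\sqrt{r_k(P)} - \sqrt{c_k(P)}\bigr)^2 \geq |S_{\ell^\star}| \cdot \frac{1}{|S_{\ell^\star}|}\sum_{k \in S_{\ell^\star}}(\cdots) \geq \frac{1}{p}\sum_{\ell=1}^p \sum_{k \in S_\ell}\bigl(\sqrt{r_k(P)} - \sqrt{c_k(P)}\bigr)^2 = \frac{1}{p}\sum_{k=1}^n \bigl(\sqrt{r_k(P)} - \sqrt{c_k(P)}\bigr)^2,
\end{align*}
where the middle inequality is just ``the max is at least the average'' over the $p$ blocks (weighted so the block sizes cancel), and the final equality is because $\{S_\ell\}$ partitions $[n]$. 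Combining with the displayed bound from the first step gives $\Phi(x) - \Phi(x') \geq \tfrac{1}{p}\sum_{k=1}^n(\sqrt{r_k(P)} - \sqrt{c_k(P)})^2 = \tfrac{2}{p}\Hell^2(r(P),c(P))$ via~\eqref{eq-pot:hell}. Finally, since $x$ is an iterate reachable from $\zero$ by monotone Osborne updates we have $\Phi(x) \leq \Phi(\zero)$, so Proposition~\ref{prop:hell-lb} applies and lower-bounds the Hellinger imbalance by $\tfrac{1}{8}(\tfrac{\Phi(x)-\Phi^\star}{\diamG\log\condK} \vee \eps)^2$, which yields the claimed $\tfrac{1}{4p}(\cdots)^2$ bound.

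I do not expect a serious obstacle here; the content is essentially bookkeeping. The one point requiring genuine care is the very first step --- verifying that the exact potential-decrease identity for a simultaneous block update really does decompose additively over the coordinates of the block (equivalently, that $e^{\Phi}$ restricted to the affine subspace $x + \spann\{e_k : k \in S_\ell\}$ is separable). This hinges on the coloring property that no two coordinates in $S_\ell$ index adjacent vertices of $G_K$, so that $K_{kk'} = K_{k'k} = 0$ for distinct $k, k' \in S_\ell$ and the cross terms $e^{x_k - x_{k'}}K_{kk'}$ that would couple the updates vanish; once that is in hand, the rest of the argument is a verbatim transcription of the Greedy Osborne proof with $n$ replaced by $p$. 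For Random Block Osborne the only change is to take expectations over the uniform choice of $\ell \in [p]$, at which point the ``max $\geq$ average'' inequality~\eqref{eq:prop:pot-dec:greedy-block} becomes an exact equality, giving the same $\tfrac{1}{4p}$ constant.
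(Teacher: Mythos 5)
Your proposal is correct and follows essentially the same route as the paper's proof: exact block potential decrease, the bound $-\log(1-z)\ge z$, the greedily chosen block beating the average over the $p$ blocks (the same inequality the paper labels~\eqref{eq:prop:pot-dec:greedy-block}), and then Proposition~\ref{prop:hell-lb} exactly as in the single-coordinate greedy analysis. The only cosmetic difference is in the first step, where you record the block decrease as $-\log\bigl(1-\sum_{k\in S_\ell}(\sqrt{r_k(P)}-\sqrt{c_k(P)})^2\bigr)$ while the paper writes $-\sum_{k\in S_\ell}\log\bigl(1-(\sqrt{r_k(P)}-\sqrt{c_k(P)})^2\bigr)$; both rest on the same non-adjacency/separability observation you verify, and both reduce to the identical lower bound $\sum_{k\in S_\ell}\bigl(\sqrt{r_k(P)}-\sqrt{c_k(P)}\bigr)^2$, after which the argument coincides with the paper's.
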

\begin{proof}
	Let $S_{\ell}$ be the chosen block. Using in order Lemma~\ref{lem:pot-dec}, the inequality $-\log(1 - z) \geq z$, the definition of Greedy Block Osborne, re-arranging, and then Proposition~\ref{prop:hell-lb},
	\begin{align}
	\Phi(x) - \Phi(x')
	&= 
	- \sum_{k \in S_{\ell}} \log (1 - \left( \sqrt{r_k(P)} - \sqrt{c_k(P)})^2 \right) \nonumber
	\\ &\geq 
	\sum_{k \in S_{\ell}} \left( \sqrt{r_k(P)} - \sqrt{c_k(P)} \right)^2 \nonumber
	\\ &\geq
	\frac{1}{p} \sum_{\ell=1}^p \sum_{k \in S_{\ell}} \left( \sqrt{r_{\ell}(P)} - \sqrt{c_{\ell}(P)} \right)^2
	\label{eq:prop:pot-dec:greedy-block}
	\\ &=
	\frac{1}{p} \sum_{k=1}^n \left( \sqrt{r_{k}(P)} - \sqrt{c_{k}(P)} \right)^2 \nonumber
	\\ &\geq
	\frac{1}{4p} \left( \frac{\Phi(x) - \Phi^*}{\diamG \log \condK} \vee \eps \right)^2. \nonumber
	\end{align}
\end{proof}

With this $n/p$ times larger per-iteration potential decrease, the number of rounds required by Greedy and Random Block Osborne is then $n/p$ times smaller than the number of Osborne updates required by their non-parallelized counterparts, establishing the desired rounds bounds in Theorem~\ref{thm:par:all}. The rounds bound for Random-Reshuffle Cyclic Block Osborne is then $p$ times that of Random Block Osborne by an identical coupling argument as for their non-parallelized counterparts (see \S\ref{sec:cyclic}). 
\par Next, we describe the total-work bounds in Theorem~\ref{thm:par:all}. For Random-Shuffle Cyclic Block Osborne, every $p$ rounds is a full cycle and therefore requires $\Theta(m)$ work. For Greedy and Random Block Osborne, each round takes work proportional to the number of nonzero entries in the updated block. For Random Block Osborne, this is $\Theta(m/p)$ on average by an identical argument to Observation~\ref{obs:osb-rand:iter}. For Greedy Block Osborne, this could be up to $O(m)$ in the worst case. (Although this is of course significantly improvable if the blocks have balanced sizes.)

Finally, we note that combining Theorem~\ref{thm:par:all} with the extensive literature on parallelized algorithms for coloring bounded-degree graphs yields a fast parallelized algorithm for balancing $\Delta$-uniformly sparse matrices, i.e., matrices $K$ for which $G_K$ has max degree\footnote{
	This is the degree in the undirected graph where $(i,j)$ is an edge if either $(i,j)$ or $(j,i)$ is an edge in $G_K$.
} $\Delta$.

\begin{cor}[Parallelized Osborne for uniformly sparse matrices]\label{cor:par:unif}
	There is a parallelized algorithm that, given any $\Delta$-uniformly sparse matrix $K \in \Rpnn$, computes an $\eps$-approximate balancing in $O(\tfrac{\Delta}{\eps} (\tfrac{1}{\eps} \wedge \diamG)\log \condK)$ rounds and $O(\tfrac{m}{\eps} (\tfrac{1}{\eps} \wedge \diamG)\log \condK)$ total work, both in expectation and w.h.p.
\end{cor}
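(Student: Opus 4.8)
The plan is to combine the convergence guarantee for Random Block Osborne established in Theorem~\ref{thm:par:all} with an off-the-shelf parallel graph-coloring subroutine, exploiting that a $\Delta$-uniformly sparse matrix has a support graph of max degree $\Delta$ and is therefore $(\Delta+1)$-colorable. Since Theorem~\ref{thm:par:all} already gives a rounds bound linear in the number of colors $p$ and a total-work bound independent of $p$, it suffices to supply it with a coloring of size $p = O(\Delta)$ computed cheaply in parallel.

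Concretely I would proceed in three steps. \textbf{Step 1 (coloring).} Form the undirected graph on $[n]$ in which $\{i,j\}$ is an edge whenever $K_{ij} > 0$ or $K_{ji} > 0$; by hypothesis this graph has max degree $\Delta$, hence is $(\Delta+1)$-colorable. Run a standard parallel/distributed $(\Delta+1)$-vertex-coloring routine (e.g.\ Luby-type randomized coloring) to obtain a partition $S_1,\dots,S_p$ of $[n]$ into $p \leq \Delta+1$ color classes; such routines run in $O(\log n)$ rounds with $O(m\log n)$ total work. \textbf{Step 2 (balance).} Feed this coloring to Random Block Osborne. By Theorem~\ref{thm:par:all}, it solves $\ABALKeps$ in $O(\tfrac{p}{\eps}(\tfrac1\eps \wedge \diamG)\log\condK)$ rounds and $O(\tfrac{m}{\eps}(\tfrac1\eps \wedge \diamG)\log\condK)$ total work, both in expectation and w.h.p. \textbf{Step 3 (substitute).} Since $p \leq \Delta+1 = O(\Delta)$, the rounds bound becomes $O(\tfrac{\Delta}{\eps}(\tfrac1\eps \wedge \diamG)\log\condK)$, which is the claimed rounds bound; the work bound is already in the claimed form.

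It remains only to check that the coloring preprocessing of Step~1 is lower-order. Here one uses that $K$ is balanceable, so $G_K$ is strongly connected (Lemma~\ref{lem:balanceable}) and hence $m \geq n$; moreover each of the $m$ nonzero entries is at least $\Kmin$, so $\sum_{ij} K_{ij} \geq m\Kmin$ and therefore $\condK \geq m \geq n$. Consequently $\log\condK \geq \log n$, so the $O(\log n)$ coloring rounds and $O(m\log n)$ coloring work are absorbed (up to constants) into the corresponding $O(\cdot\log\condK)$ terms from Step~2. I expect the only genuinely delicate point to be probabilistic bookkeeping rather than anything substantive: if the coloring routine is randomized, its failure probability must be union-bounded against the high-probability event of Theorem~\ref{thm:par:all}, and the ``in expectation'' claim requires noting that conditioning on the event ``coloring is valid'' affects the subsequent round/work counts only negligibly --- or, more simply, one may invoke a \emph{deterministic} parallel coloring algorithm to sidestep the issue entirely.
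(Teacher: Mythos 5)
Your proposal is correct and follows essentially the same route as the paper: compute a $(\Delta+1)$-coloring of the max-degree-$\Delta$ support graph with a parallel coloring subroutine, then run Random Block Osborne and invoke Theorem~\ref{thm:par:all} with $p \leq \Delta+1$. The paper simply uses the deterministic $(\Delta+1)$-coloring algorithm of Barenboim--Elkin--Kuhn (running in $O(\Delta) + \tfrac{1}{2}\log^* n$ rounds) in place of your randomized Luby-type routine, which sidesteps the probabilistic bookkeeping exactly as you anticipated; your explicit absorption argument via $\log \condK \geq \log n$ is a fine (and slightly more careful) way to dispose of the preprocessing overhead.
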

\begin{proof}
	The algorithm of~\citep{BarElkKuh14} computes a $\Delta+1$  coloring in $O(\Delta) + \half \log^*n$ rounds, where $\log^*$ is the iterated logarithm. Run Random Block Osborne with this coloring, and apply Theorem~\ref{thm:par:all}.
\end{proof}

We remark that a coloring of size $\Delta+1$ can be alternatively computed by a simple greedy algorithm in $O(m)$ linear time. Although sequential, this simpler algorithm may be more practical.

\section{Numerical precision}\label{sec:bits}

So far we have assumed exact arithmetic for simplicity of exposition; here we address numerical precision issues. Note that Osborne iterates can have variation norm up to $O(n \log \condK)$; see~\citep[\S3]{KalKhaSho97} and Lemma~\ref{lem:bal:R}. For such iterates, operations on the current balancing $\diag(e^{x})K\diag(e^{-x})$---namely, computing row and column sums for an Osborne update---na\"ively require arithmetic operations on $O(n \log \condK)$-bit numbers. 
Here, we show that there is an implementation that uses numbers with only logarithmically few bits and still achieves the same runtime bounds.\footnote{
	Note that Theorem~\ref{thm:osb-all:bits} outputs only the balancing vector $x \in \R^n$, not the approximately balanced matrix $A = \diag(e^x)K\diag(e^{-x})$. If applications require $A$, this can be computed to polynomially small entrywise additive error using only logarithmically many bits; this is sufficient, e.g., for the application of approximating Min-Mean-Cycle~\citep[\S5.3]{AltPar20mmc}.}

\par Below, we assume for simplicity that each input entry $K_{ij}$ is represented using $O(\log \tfrac{\Kmax}{\Kmin} + \log \tfrac{n}{\eps})$ bits. (Or $O(\log \log \tfrac{\Kmax}{\Kmin} + \log \tfrac{n}{\eps})$ bits if input on the logarithmic scale $\log K_{ij}$, for $(i,j) \in \suppK$, see Remark~\ref{rem:bits-log}.) This assumption is made essentially without loss of generality since after a possible rescaling and truncation of entries to $\plusminus \eps\Kmin/n$---which does not change the problem of approximately balancing $K$ to $O(\eps)$ accuracy by Lemma~\ref{lem:bits:bal-app}---all inputs are represented using this many bits.

\begin{theorem}[Osborne variants with low bit-complexity]\label{thm:osb-all:bits}
	There is an implementation of Random Osborne (respectively, Random-Reshuffle Cyclic Osborne, Random Block Osborne, and Random-Reshuffle Cyclic Block Osborne) that uses arithmetic operations over $O(\log \tfrac{n}{\eps} +  \log \tfrac{\Kmax}{\Kmin})$-bit numbers and achieves the same runtime bounds as in Theorem~\ref{thm:osb-rand} (respectively, Theorem~\ref{thm:osb-cyc}, ~\ref{thm:par:all}, and again~\ref{thm:par:all}). 
\par Moreover, if the matrix $K$ is given as input through the logarithms of its entries $\{\log K_{ij}\}_{(i,j) \in \suppK}$, this bit-complexity is improvable to $O(\log \tfrac{n}{\eps} +  \log \log \tfrac{\Kmax}{\Kmin})$.
\end{theorem}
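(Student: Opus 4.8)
The plan is to implement all four variants in the logarithmic domain, as in Algorithm~\ref{alg:osb} and Remark~\ref{rem:logdomain}: maintain the iterate $x\in\Rn$ (never the scaling), access $K$ only through $L_{ij}:=\log K_{ij}$ for $(i,j)\in\suppK$, and evaluate an Osborne update on a coordinate $k$ by applying the log-sum-exp identity $\log\sum_\ell e^{z_\ell}=\max_\ell z_\ell+\log\sum_\ell e^{z_\ell-\max_\ell z_\ell}$ to the exponent vectors $(x_k-x_j+L_{kj})_j$ (giving $\log r_k$) and $(x_i-x_k+L_{ik})_i$ (giving $\log c_k$). Three low-precision modifications are layered on: (i) every log-sum-exp is computed \emph{approximately}, discarding each summand whose exponent is more than $\Theta(\log(n/\eps))$ below the running maximum and rounding all retained quantities to $\Theta(\log(n/\eps))$ bits of precision; (ii) the algorithm re-centers $x\mapsto x-\tfrac12(\max_i x_i+\min_i x_i)\bone$ periodically --- every $n$ iterations for Random Osborne, every $p$ for Random Block Osborne, and after each cycle for Cyclic and Cyclic Block Osborne (and once more before output); and (iii) in the model where $K$ is given directly, a preprocessing pass computes all $L_{ij}=\log K_{ij}$, which is the only step operating on $\Theta(\log(\Kmax/\Kmin))$-bit numbers. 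Re-centering is harmless because both $\Phi$ and the scaling $\diag(e^x)K\diag(e^{-x})$ --- hence the entire sequence of imbalances and the halting time --- are invariant under translations of $\bone$; thus re-centering affects neither the analysis nor the algorithm's trajectory (in particular the martingale and coupling arguments are untouched), and its $O(n)$ cost is amortized to lower order since each window between re-centerings already costs $\Omega(m)\ge\Omega(n)$ arithmetic operations.

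The second step is to bound the bit-length of every quantity handled after preprocessing. One shows by induction (using the potential-decrease estimate of the third step below, so that approximate updates remain monotone before halting) that every iterate $x$ satisfies $\Phi(x)\le\Phi(\zero)$; hence $\var{x}\le\diamG\log\condK$ by Lemma~\ref{lem:bal:R}, so immediately after each re-centering $\|x\|_\infty=\var{x}/2\le\tfrac12\diamG\log\condK$. A one-line computation --- using that an (approximate) Osborne update sets $x_k=\tfrac12\log\bigl(\sum_i e^{x_i}K_{ik}/\sum_j e^{-x_j}K_{kj}\bigr)+O(1)$, which lies in the interval $[-\|x\|_\infty-O(\log(n\Kmax/\Kmin)),\,\|x\|_\infty+O(\log(n\Kmax/\Kmin))]$ --- shows $\|x\|_\infty$ can grow by at most $O(\log(n\Kmax/\Kmin))$ per iteration, so between re-centerings $\|x\|_\infty=O(n\log n+n\log(\Kmax/\Kmin))$ deterministically. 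Consequently each $x_i$, and each exponent $x_k-x_j+L_{kj}$, has $O(\log(n/\eps)+\log\log(\Kmax/\Kmin))$ integer bits and $O(\log(n/\eps))$ fractional bits; after subtracting the maximum and discarding exponents more than $\Theta(\log(n/\eps))$ below it, the retained exponents lie in an interval of width $O(\log(n/\eps))$, their exponentials and partial sums lie in $[0,n]$, and so $\log r_k$, $\log c_k$, and the update $\tfrac12(\log c_k-\log r_k)$ again fit in $O(\log(n/\eps)+\log\log(\Kmax/\Kmin))$ bits. Hence all post-preprocessing arithmetic is on $O(\log(n/\eps)+\log\log(\Kmax/\Kmin))$-bit numbers --- within $O(\log(n/\eps)+\log(\Kmax/\Kmin))$ in the direct-input model, and exactly the claimed bound in the log-input model, where preprocessing is skipped and (by the normalization assumed before the theorem, cf.\ Lemma~\ref{lem:bits:bal-app}) each $L_{ij}$ already has $O(\log(n/\eps)+\log\log(\Kmax/\Kmin))$ bits.

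The third step is a perturbation analysis showing the approximations cost only constant factors. Run the algorithm targeting $\eps/2$-balancing measured via the approximate row/column sums, and fix the precision so that each approximate $\log r_k,\log c_k$ has additive error at most $\eps':=c\,\eps/n$ for a small constant $c$; this requires only $O(\log(n/\eps))$ bits. First, the approximate imbalance then differs from the true imbalance by $O(\eps')=o(\eps)$, so at halting the true scaling is genuinely $\eps$-balanced, while before halting the true imbalance exceeds $\eps/4$, so Proposition~\ref{prop:hell-lb} still applies. Second, since $\Phi$ is $1$-coordinate-smooth (an elementary computation on $f(t)=\log(\alpha e^t+\beta e^{-t}+\gamma)$ gives $f''\le 1$), moving a coordinate to within $\eps'$ of its exact minimizer raises $\Phi$ by at most $\tfrac12(\eps')^2$ above the exact-update value, so a single (resp.\ block, resp.\ full-cycle) approximate update increases $\Phi$ by at most $\tfrac12(\eps')^2$ (resp.\ $\tfrac n2(\eps')^2$) beyond the exact value; as the exact per-iteration or per-cycle decrease before halting is $\gtrsim \eps^2/n$ (resp.\ $\eps^2/p$, $\eps^2/\sqrt n$) by the potential-decrease lemmas (Lemma~\ref{lem:pot-dec:rand}, Lemma~\ref{lem:pot-dec:cyclic}, Lemma~\ref{lem:pot-dec:greedy-block}, and their Block analogues in \S\ref{sec:par}), our choice of $\eps'$ makes the approximate decrease at least half the exact one, and still nonnegative --- which closes the induction of the second step. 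Feeding ``$\Phi$ decreases by at least half as much'' into the proofs of Theorems~\ref{thm:osb-rand},~\ref{thm:osb-cyc}, and~\ref{thm:par:all} --- the deterministic potential chain of \S\ref{sec:greedy} and the stochastic tools of \S\ref{sec:rand} (Lemma~\ref{lem:prob} via Doob's Optional Stopping Theorem, the martingale Chernoff bound, and Wald's identity) apply verbatim with their constants halved, since per-iteration runtimes, bounded-increment hypotheses, and conditional-expectation hypotheses are all unaffected --- preserves every round, work, and arithmetic-operation bound up to constants absorbed into $O(\cdot)$.

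I expect the main obstacle to be the bookkeeping of this perturbation analysis: tracking how the several error sources (rounding $x$, rounding the $L_{ij}$, dropping small summands, rounding partial sums) compound into the additive error of $\log c_k-\log r_k$, and then verifying that the resulting ``half as much progress'' statement genuinely slots into \emph{both} the deterministic chain of \S\ref{sec:greedy} \emph{and} the martingale concentration arguments of \S\ref{sec:rand} without weakening the hypotheses of Lemma~\ref{lem:prob} (bounded increments and the per-step conditional-expectation lower bound) for the perturbed process. By contrast, the range and bit bounds of the second step are essentially immediate from Lemma~\ref{lem:bal:R} once periodic re-centering is in place, and the input-model dichotomy reduces cleanly to whether or not one must first take logarithms of the entries.
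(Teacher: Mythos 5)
Your proposal is correct and takes essentially the same route as the paper's proof: a log-domain implementation that rounds the iterate and the values $\log K_{ij}$ to additive accuracy $\mathrm{poly}(\eps/n)$, evaluates each Osborne update by a low-precision log-sum-exp that discards summands far below the maximum (cf.\ Lemma~\ref{lem:bits:lse}), argues via a one-step perturbation bound that the (expected) per-iteration potential decrease of Lemmas~\ref{lem:pot-dec:rand}, \ref{lem:pot-dec:cyclic}, and~\ref{lem:pot-dec:greedy-block} degrades by at most a factor of two so the proofs of Theorems~\ref{thm:osb-rand}, \ref{thm:osb-cyc}, and~\ref{thm:par:all} go through unchanged, and counts bits using the variation-norm bound of Lemma~\ref{lem:bal:R}. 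Your explicit periodic re-centering with a per-iteration drift bound, and your use of coordinate-smoothness of $\Phi$ in place of its $\ell_\infty$-Lipschitzness (Lemma~\ref{lem:bits:phi-lip}), are minor refinements of steps the paper treats more tersely, not a different approach.
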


\par This result may be of independent interest since the aforementioned bit-complexity issues of Osborne's algorithm are well-known to cause numerical precision issues in practice and have been difficult to analyze theoretically. We note that~\citep[\S5]{OstRabYou17} shows similar bit complexity $O(\log (n \condK/\eps))$ for an Osborne variant they propose; however, that variant has runtime scaling in $n^2$ rather than $m$ (see footnote~\ref{ft:ost}). Moreover, our analysis is relatively simple and extends to the related Sinkhorn algorithm for Matrix Scaling (see Appendix~\ref{app:sink}).
\par Before proving Theorem~\ref{thm:osb-all:bits}, we make several remarks.

\begin{remark}[Log-domain input]\label{rem:bits-log}
	Theorem~\ref{thm:osb-all:bits} gives an improved bit-complexity if $K$ is input through the \textit{logarithms} of its entries. This is useful in an application such as Min-Mean-Cycle where the input is a weighted adjacency matrix $W$, and the matrix $K$ to balance is the entrywise exponential of (a constant times) $W$~\citep[\S5]{AltPar20mmc}. 
\end{remark}

\begin{remark}[Greedy Osborne requires large bit-complexity]\label{rem:greedy-bits}
	All known implementations of Greedy Osborne require bit-complexity at least $\tilde{\Omega}(n)$~\citep{OstRabYou17}. The obstacle is the computation~\eqref{def:greedy} of the next update coordinate, which requires computing the \emph{difference} of two log-sum-exp's. It can be shown that computing this difference to a constant multiplicative error suffices. However, this still requires at least computing the sign of the difference, which importantly, precludes dropping small summands in each log-sum-exp---a key trick used for computing an individual log-sum-exp to additive error with low bit-complexity (Lemma~\ref{lem:bits:lse}).
\end{remark}

We now turn to the proof of Theorem~\ref{thm:osb-all:bits}. For brevity, we establish this only for Random Osborne; the proofs for the other variants are nearly identical. Our implementation of Random Osborne makes three minor modifications to the exact-arithmetic implementation in Algorithm~\ref{alg:osb}. We emphasize that these modifications are in line with standard implementations of Osborne's algorithm in practice, see Remark~\ref{rem:logdomain}.

\begin{enumerate}
	\item In a pre-processing step, compute $\{\log K_{ij}\}_{(i,j) \in \suppK}$ to additive accuracy $\gamma = \Theta(\eps/n)$.
	\item Truncate each Osborne iterate $\xt$ entrywise to additive accuracy $\tau = \Theta(\eps^2/n)$.
	\item Compute Osborne updates to additive accuracy $\tau$ by using log-sum-exp computation tricks (Lemma~\ref{lem:bits:lse}) and using $K_{ij}$ only through the truncated values $\log K_{ij}$ computed in step $1$.
\end{enumerate}
Step 1 is performed only when $K$ is not already input on the logarithmic scale, and is responsible for the $O(\log (\Kmax/\Kmin))$ bit-complexity.
To argue about these modifications, we collect several helpful observations, the proofs of which are simple and deferred to Appendix~\ref{app:pf:bits} for brevity.

\begin{lemma}[Approximately balancing an approximate matrix suffices]\label{lem:bits:bal-app}
	Let $K,\tilde{K} \in \Rpnn$ such that $\supp(K) = \supp(\tilde{K})$ and 
 	the ratio $K_{ij}/\tilde{K}_{ij}$ of nonzero entries is bounded in $[1 - \gamma, 1 + \gamma]$ for some $\gamma \in (0,1/3)$. If $x$ is an $\eps$-balancing of $K$, then $x$ is an $(\eps + 6n\gamma)$-balancing of $\tilde{K}$.
\end{lemma}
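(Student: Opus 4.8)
The plan is to directly compare the imbalance functional $\|r(A) - c(A)\|_1 / \sum_{ij} A_{ij}$ evaluated at the scaling $A = \diag(e^x)K\diag(e^{-x})$ versus the scaling $\tilde A = \diag(e^x)\tilde K\diag(e^{-x})$ of the perturbed matrix. The key point is that a $(1 \pm \gamma)$ multiplicative perturbation of every nonzero entry of $K$---which is exactly what the hypothesis $K_{ij}/\tilde K_{ij} \in [1-\gamma, 1+\gamma]$ asserts---perturbs every row sum, every column sum, and the total sum of $A$ multiplicatively by at most $1 \pm \gamma$ as well, since each of these quantities is a nonnegative linear combination of the $A_{ij}$'s with the same index set (the supports agree). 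So I would first record that for each $i$, $|r_i(\tilde A) - r_i(A)| \le \gamma\, r_i(A)$ and likewise $|c_i(\tilde A) - c_i(A)| \le \gamma\, c_i(A)$, and that $|\sum_{ij}\tilde A_{ij} - \sum_{ij} A_{ij}| \le \gamma \sum_{ij} A_{ij}$, hence $\sum_{ij}\tilde A_{ij} \ge (1-\gamma)\sum_{ij} A_{ij}$.

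Next I would chain the triangle inequality: $\|r(\tilde A) - c(\tilde A)\|_1 \le \|r(A) - c(A)\|_1 + \|r(\tilde A) - r(A)\|_1 + \|c(\tilde A) - c(A)\|_1 \le \|r(A) - c(A)\|_1 + \gamma\|r(A)\|_1 + \gamma\|c(A)\|_1 = \|r(A) - c(A)\|_1 + 2\gamma\sum_{ij} A_{ij}$, using that $\|r(A)\|_1 = \|c(A)\|_1 = \sum_{ij} A_{ij}$. Dividing by $\sum_{ij}\tilde A_{ij} \ge (1-\gamma)\sum_{ij}A_{ij}$ and using the hypothesis that $x$ is an $\eps$-balancing of $K$ (i.e.\ $\|r(A) - c(A)\|_1 \le \eps \sum_{ij} A_{ij}$) gives
\[
\frac{\|r(\tilde A) - c(\tilde A)\|_1}{\sum_{ij}\tilde A_{ij}} \le \frac{(\eps + 2\gamma)\sum_{ij} A_{ij}}{(1-\gamma)\sum_{ij} A_{ij}} = \frac{\eps + 2\gamma}{1-\gamma}.
\]
It then remains a routine elementary estimate to check that $\frac{\eps + 2\gamma}{1-\gamma} \le \eps + 6n\gamma$ for $\gamma \in (0, 1/3)$ (indeed $\frac{1}{1-\gamma} \le 1 + \frac{3}{2}\gamma$ on this range, so the left side is at most $\eps + 2\gamma + \frac{3}{2}\gamma\eps + 3\gamma^2 \le \eps + 6\gamma \le \eps + 6n\gamma$ since $\eps \le 2$ and $n \ge 1$); I would present this bound without belaboring the arithmetic.

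\textbf{Main obstacle.} There is no serious obstacle here---the statement is a soft stability estimate and the proof is three lines of triangle inequality plus a one-line scalar bound. The only thing to be mildly careful about is ensuring the perturbation bounds on $r_i$, $c_i$, and the total sum all go through with the \emph{same} sign convention and that the supports genuinely agree (so that no spurious nonzero entries appear in $\tilde A$ that could break the multiplicative comparison); this is guaranteed by the hypothesis $\supp(K) = \supp(\tilde K)$. One should also note the diagonal conjugation by $\diag(e^x)$ does not interact with the comparison at all, since it multiplies entry $(i,j)$ of both $K$ and $\tilde K$ by the same factor $e^{x_i - x_j}$, so the ratio $\tilde A_{ij}/A_{ij} = \tilde K_{ij}/K_{ij}$ is unchanged and the whole argument could equivalently be run on $K, \tilde K$ directly and then transported.
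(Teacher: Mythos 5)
Your proposal is correct in substance and uses the same stability-via-triangle-inequality idea as the paper; the difference is only bookkeeping. The paper normalizes first: it compares $P=A/\sum_{ij}A_{ij}$ with $\tilde{P}=\tilde{A}/\sum_{ij}\tilde{A}_{ij}$, observes that each nonzero ratio $\tilde{P}_{ij}/P_{ij}$ lies in $[(1-\gamma)/(1+\gamma),(1+\gamma)/(1-\gamma)]\subset[1-3\gamma,1+3\gamma]$, and then sums per-coordinate additive errors of $3\gamma$ over the $n$ rows and $n$ columns, which is exactly where the $6n\gamma$ comes from. You keep things unnormalized and divide by $\sum_{ij}\tilde{A}_{ij}$ at the end; this in fact yields a dimension-free bound of the form $\eps+O(\gamma)$, which is stronger than the stated $\eps+6n\gamma$ for $n\geq 2$ (and $n=1$ is trivial, since a $1\times 1$ matrix always satisfies $r=c$).

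One inaccuracy to fix: the hypothesis bounds $K_{ij}/\tilde{K}_{ij}\in[1-\gamma,1+\gamma]$, not $\tilde{K}_{ij}/K_{ij}$, so your intermediate claims $|r_i(\tilde{A})-r_i(A)|\leq\gamma\,r_i(A)$ and $|\sum_{ij}\tilde{A}_{ij}-\sum_{ij}A_{ij}|\leq\gamma\sum_{ij}A_{ij}$ do not literally follow as written. The correct entrywise statement is $\tilde{A}_{ij}/A_{ij}=\tilde{K}_{ij}/K_{ij}\in[1/(1+\gamma),1/(1-\gamma)]$, giving $|\tilde{A}_{ij}-A_{ij}|\leq\tfrac{\gamma}{1-\gamma}A_{ij}\leq\tfrac{3}{2}\gamma A_{ij}$ (your lower bound $\sum_{ij}\tilde{A}_{ij}\geq(1-\gamma)\sum_{ij}A_{ij}$ does survive). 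Rerunning your chain with these constants gives $\frac{\eps+3\gamma}{1-\gamma}=\eps+\frac{\eps\gamma+3\gamma}{1-\gamma}\leq\eps+\tfrac{15}{2}\gamma$ after the harmless reduction to $\eps\leq 2$ (for $\eps>2$ the lemma is vacuous since imbalance never exceeds $2$; this reduction is also implicit in your "$\eps\leq 2$" step and should be stated). Since $\tfrac{15}{2}\gamma\leq 6n\gamma$ for $n\geq 2$, the slip only costs constants that the factor $n$ in the target comfortably absorbs, so the argument stands once these two points are made explicit.
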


\begin{lemma}[Stability of log-sum-exp]\label{lem:bits:lse-lip}
	The function $z \mapsto \log (\sum_{i=1}^n e^{z_i})$ is $1$-Lipschitz with respect to the $\ell_{\infty}$ norm on $\Rn$. 
\end{lemma}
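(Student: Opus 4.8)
The plan is to prove the stated Lipschitz bound by a direct differentiation argument, or equivalently by a convexity/monotonicity argument. First I would set $f(z) := \log(\sum_{i=1}^n e^{z_i})$ and observe that $f$ is smooth on all of $\Rn$, with gradient $\nabla f(z) = p(z)$, where $p(z)_i = e^{z_i}/\sum_j e^{z_j}$ is a probability vector in $\Delta_n$. Since $\|\nabla f(z)\|_1 = \sum_i p(z)_i = 1$ for every $z$, the mean value theorem (applied along the segment from $z$ to $z'$) together with H\"older's inequality gives $|f(z) - f(z')| \leq \sup_{\xi} \|\nabla f(\xi)\|_1 \, \|z - z'\|_\infty = \|z-z'\|_\infty$, which is exactly the claim.

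Alternatively, and perhaps even more elementary, I would argue directly from monotonicity without invoking calculus: if $\|z - z'\|_\infty \leq \delta$, then $z_i' \leq z_i + \delta$ for every $i$, hence $\sum_i e^{z_i'} \leq e^{\delta} \sum_i e^{z_i}$, and taking logarithms yields $f(z') \leq f(z) + \delta$. By symmetry $f(z) \leq f(z') + \delta$, so $|f(z) - f(z')| \leq \delta = \|z - z'\|_\infty$. This is the cleanest route and is the one I would actually write down, since it requires no differentiability and no appeal to the mean value theorem.

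There is essentially no obstacle here --- the statement is a textbook fact and the second argument above is a three-line calculation. The only thing to be mildly careful about is the direction of the inequality (which variable bounds which), but this is symmetric and trivially handled. The lemma is included in the paper purely as a reusable stability estimate: it is invoked in \S\ref{sec:bits} to control how truncation errors in the iterates $\xt$ and in the stored values $\log K_{ij}$ propagate through the log-sum-exp computations of an Osborne update, so that accuracy $\tau = \Theta(\eps^2/n)$ in the inputs yields accuracy $\Theta(\tau)$ in the computed update (and hence, via Lemma~\ref{lem:bits:bal-app} and the bound $\var{x} \leq \diamG \log\condK$ from Lemma~\ref{lem:bal:R}, a correct $O(\eps)$-balancing computable with $O(\log\tfrac{n}{\eps} + \log\tfrac{\Kmax}{\Kmin})$-bit numbers).
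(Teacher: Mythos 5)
Your proposal is correct, and the monotonicity argument you say you would actually write down is essentially the paper's own proof: the paper bounds the ratio $\sum_i e^{x_i}/\sum_i e^{y_i}$ by $\max_i e^{x_i-y_i} \leq e^{\|x-y\|_\infty}$, which is the same three-line calculation phrased via the mediant inequality. (Your first, gradient-plus-H\"older argument is also valid but is not needed.)
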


\begin{lemma}[Stability of potential function]\label{lem:bits:phi-lip}
Let $K \in \Rpnn$. Then $\Phi(x) := \log (\sum_{ij}e^{x_i - x_j} K_{ij})$ is $2$-Lipschitz with respect to the $\ell_{\infty}$ norm on $\Rn$.
\end{lemma}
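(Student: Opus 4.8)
\textbf{Plan for proving Lemma~\ref{lem:bits:phi-lip}.} The goal is to show that $\Phi(x) = \log(\sum_{ij} e^{x_i - x_j} K_{ij})$ is $2$-Lipschitz with respect to the $\ell_\infty$ norm. The natural approach is to factor $\Phi$ as the composition of a $1$-Lipschitz log-sum-exp map (Lemma~\ref{lem:bits:lse-lip}) with the linear map $x \mapsto (x_i - x_j)_{(i,j)}$, and bound the $\ell_\infty$-to-$\ell_\infty$ operator norm of the latter by $2$. Concretely, fix $x, y \in \Rn$ and write $z = (x_i - x_j + \log K_{ij})_{(i,j) \in \suppK}$ and $z' = (y_i - y_j + \log K_{ij})_{(i,j) \in \suppK}$, viewing both as vectors indexed by $\suppK$. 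Then $\Phi(x) = \mathrm{lse}(z)$ and $\Phi(y) = \mathrm{lse}(z')$ where $\mathrm{lse}$ is the log-sum-exp function over $|\suppK|$ coordinates, which is $1$-Lipschitz in $\ell_\infty$ by Lemma~\ref{lem:bits:lse-lip}.

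\textbf{Key step.} It remains to bound $\|z - z'\|_\infty$. For each $(i,j) \in \suppK$, the $\log K_{ij}$ terms cancel, so $|z_{ij} - z'_{ij}| = |(x_i - x_j) - (y_i - y_j)| \leq |x_i - y_i| + |x_j - y_j| \leq 2\|x - y\|_\infty$ by the triangle inequality. Taking the max over $(i,j)$ gives $\|z - z'\|_\infty \leq 2\|x - y\|_\infty$. Combining with the $1$-Lipschitzness of $\mathrm{lse}$ yields $|\Phi(x) - \Phi(y)| = |\mathrm{lse}(z) - \mathrm{lse}(z')| \leq \|z - z'\|_\infty \leq 2\|x-y\|_\infty$, as claimed.

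\textbf{Main obstacle.} There is essentially no obstacle here; the only mild subtlety is bookkeeping the index set $\suppK$ correctly so that Lemma~\ref{lem:bits:lse-lip} (stated for log-sum-exp over $n$ coordinates) applies verbatim to the $|\suppK|$ coordinates appearing in $\Phi$ — but since that lemma holds for any number of summands, this is immediate. An alternative proof route, should one prefer to avoid invoking the composition structure, is to differentiate: $\nabla \Phi(x) = r(P) - c(P)$ by~\eqref{eq:grad}, and since $r(P), c(P) \in \Delta_n$ one has $\|\nabla\Phi(x)\|_1 = \|r(P) - c(P)\|_1 \leq \|r(P)\|_1 + \|c(P)\|_1 = 2$, so $\Phi$ is $2$-Lipschitz in $\ell_\infty$ by the duality of $\ell_1$ and $\ell_\infty$ together with the mean value theorem. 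Either route is short; I would present the compositional one for cleanliness and consistency with Lemma~\ref{lem:bits:lse-lip}.
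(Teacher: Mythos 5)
Your proposal is correct and matches the paper's proof: the paper also writes $\Phi(x)=\log\bigl(\sum_{(i,j)\in\suppK}e^{x_i-x_j+\log K_{ij}}\bigr)$, bounds $|(x_i-x_j)-(y_i-y_j)|\leq 2\|x-y\|_{\infty}$, and invokes Lemma~\ref{lem:bits:lse-lip} exactly as you do. The gradient-based alternative you sketch is also valid but is not needed.
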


\begin{lemma}[Computing log-sum-exp with low bit-complexity]\label{lem:bits:lse}
	Let $z_1, \dots, z_n \in \R$ and $\tau > 0$ be given as input, each represented using $b $ bits. Then $\log(\sum_{i=1}^n e^{z_i})$ can computed to $\plusminus \tau$ in $O(n)$ operations on $O(b + \log(\tfrac{n}{\tau}))$-bit numbers.
\end{lemma}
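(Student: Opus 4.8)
The plan is to establish Lemma~\ref{lem:bits:lse} by the standard ``max-shift then drop tiny summands'' trick, and to be careful about how the bit-count of intermediate quantities is controlled. First I would write $M := \max_i z_i$ and use the identity $\log(\sum_i e^{z_i}) = M + \log(\sum_i e^{z_i - M})$, so that all shifted exponents $z_i - M$ are nonpositive. The point of the shift is that $\sum_i e^{z_i - M} \in [1, n]$, so its logarithm lies in $[0, \log n]$, which is cheap to represent; the only large-magnitude quantity is $M$, which is one of the inputs and hence already uses $b$ bits. Computing $M$ takes $O(n)$ comparisons on $b$-bit numbers.

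Next I would argue we may discard every shifted summand $e^{z_i - M}$ with $z_i - M < -\log(2n/\tau)$: there are at most $n$ such terms, each contributing less than $\tau/(2n)$, so the total discarded mass is below $\tau/2$, and since $\sum_i e^{z_i-M} \ge 1$, dropping it perturbs $\log(\sum_i e^{z_i-M})$ by at most $\log(1 + \tau/2) \le \tau/2$. For the surviving terms, the exponent $z_i - M$ lies in $[-\log(2n/\tau), 0]$, so $e^{z_i - M} \in [\tau/(2n), 1]$; computing each such exponential and their sum, then taking one final logarithm, can be done to additive error $\tau/2$ using fixed-point numbers with $O(\log(n/\tau))$ bits after the binary point and $O(1)$ bits before it — i.e.\ $O(\log(n/\tau))$ bits total for this block of the computation. (One can be slightly more careful and track that each $e^{z_i-M}$ need only be computed to relative error $\tau/(2n)$, which again costs $O(\log(n/\tau))$ bits; summing $n$ such terms accumulates error $O(\tau)$, absorbable by adjusting constants.) Adding back $M$ at the end gives a number represented with $O(b + \log(n/\tau))$ bits, and the two error contributions sum to at most $\tau$. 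The arithmetic-operation count is $O(n)$: one pass for the max, one pass for the thresholding and exponentials, one pass to sum, and one logarithm.

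I would then also note the one subtlety worth spelling out: to \emph{decide} which summands to drop one compares $z_i - M$ against the threshold $-\log(2n/\tau)$, and $\log(2n/\tau)$ itself must be computed — but only to constant additive accuracy, which needs $O(\log\log(n/\tau))$ bits and $O(1)$ operations, so it does not affect the stated bounds; a constant-factor slack in the threshold only changes the constants in the error analysis. The overall bit-width claimed is then $O(b + \log(n/\tau))$ and the operation count $O(n)$, as asserted.

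The main obstacle — though it is more bookkeeping than genuine difficulty — is tracking the two separate sources of bit-width ($b$ for the additive shift $M$, and $O(\log(n/\tau))$ for the normalized tail computation) and verifying that every intermediate quantity (the surviving shifted exponentials, their partial sums, the final small logarithm) genuinely fits in $O(\log(n/\tau))$ bits before $M$ is re-added, rather than silently requiring $b$-bit precision throughout. Once one commits to doing the exponentiation and summation entirely in the shifted domain and only adds $M$ back as the very last step, this all goes through cleanly.
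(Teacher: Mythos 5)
Your proposal is correct and follows essentially the same route as the paper's proof: shift by $\max_i z_i$, drop summands with shifted exponent below $-\log(O(n/\tau))$, compute the surviving exponentials to relative accuracy $O(\tau/n)$ using $O(\log(n/\tau))$-bit fixed-point arithmetic, and add the max back only at the end. The extra bookkeeping you flag (the $b$-bit shift versus the low-precision tail computation, and approximating the threshold itself) is exactly the content of the paper's argument, which additionally phrases the truncation of the shifted exponents via the Lipschitz stability of log-sum-exp.
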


\begin{proof}[Proof of Theorem~\ref{thm:osb-all:bits}]
\noindent\textbf{Error and runtime analysis.}
\begin{enumerate}
	\item Let $\tilde{K}$ be the matrix whose $ij$-th entry is the exponential of the truncated $\log K_{ij}$ for $(i,j) \in \suppK$, and $0$ otherwise. The effect of step (1) is to balance $\tilde{K}$ rather than $K$. But by Lemma~\ref{lem:bits:bal-app}, this suffices since an $O(\eps)$ balancing of $\tilde{K}$ is an $O(\eps + n \gamma) = O(\eps)$ balancing of $K$.
	\item[2,3.] The combined effect is that: given the previous Osborne iterate $\xtm$, the next iterate $\xt$ differs from the value it would have in the exact-arithmetic implementation by $O(\tau)$ in $\ell_{\infty}$ norm. By Lemma~\ref{lem:bits:phi-lip}, this changes $\Phi(\xt)$ by at most $O(\tau)$. By appropriately choosing the constant in the definition of $\tau = \Theta(\eps^2/n)$, this decreases each iteration's expected progress (Lemma~\ref{lem:pot-dec:rand}) by at most a factor of $1/2$. The proof of Theorem~\ref{thm:osb-rand} then proceeds otherwise unchanged, resulting in a final runtime at most $2$ times larger. 
\end{enumerate}
\textbf{Bit-complexity analysis.} 
\begin{enumerate}
	\item Consider $(i,j) \in \suppK$.
	Since $\log K_{ij} \in [\log \Kmin, \log \Kmax]$ and are stored to additive accuracy $\gamma = \Theta(\eps/n)$, the bit-complexity for storing $\log K_{ij}$ is 
	\[
	O\left( \log \frac{\log \Kmax - \log \Kmin}{\gamma} \right) = O\left(\log \frac{n}{\eps} + \log \log \frac{\Kmax}{\Kmin} \right).
	\]
	\item Since the coordinates of each Osborne iterate are truncated to additive accuracy $\tau = \Theta(\eps^2/n)$ and have modulus at most $\diamG \log \condK$ by Lemma~\ref{lem:bal:R}, they require bit-complexity
	\[
	O\left( \log \frac{(\diamG \log \condK) - (-\diamG \log \condK)}{\tau} \right)
	=
	O\left(\log \frac{n}{\eps} + \log \log \frac{\Kmax}{\Kmin} \right).
	\]
	\item By Lemma~\ref{lem:bits:lse}, the Osborne update requires bit-complexity $O(\log \frac{n}{\tau} ) = O(\log \frac{n}{\eps})$.
\end{enumerate}
\end{proof}

\section{Conclusion}\label{sec:conc}

We conclude with several open questions:

\begin{enumerate}
	\item Can one establish matching runtime lower bounds for the variants of Osborne's algorithm? The only existing lower bound is~\citep[Theorem 6.1]{OstRabYou17}, and there is a large gap between this and the current upper bounds. 

	\item Does any variant of Cyclic Osborne run in near-linear time? The best known runtime bound for Round-Robin Cyclic Osborne scales as roughly $mn^2$~\citep{OstRabYou17}, and the runtime bound we show for Random-Reshuffle Cyclic Osborne scales as roughly $mn$ (Theorem~\ref{thm:osb-cyc}).
	\item Is there a provable gap between the (worst-case) performance of Random Osborne, Random-Reshuffle Cyclic Osborne, and Round-Robin Cyclic Osborne? The existence of such gaps in the more general context of Coordinate Descent for convex optimization is an active area of research with recent breakthroughs~\citep{SunYe16,lee2019random,sun2020efficiency}.
	\item Empirically, Osborne's algorithm often significantly outperforms its worst-case bounds. 
	Is it possible to prove faster average-case runtimes for ``typical'' matrices arising in practice? (This is the analog to the third open question in~\citep[\S6]{SchSin17} for Max-Balancing.)
\end{enumerate}

\section*{Acknowledgements} JA thanks Enric Boix-Adser\`a and Jonathan Niles-Weed for helpful conversations.

\appendix

\section{Deferred proofs}\label{app:pfs}

\subsection{Probabilistic helper lemmas}\label{app:pfs-prob}

Several times we make use of the following standard (martingale) version of multiplicative Chernoff bounds, see, e.g.,~\citep[\S4]{MitUpf17}.

\begin{lemma}[Multiplicative Chernoff Bounds]\label{lem:chernoff}
	Let $X_1, \dots X_n$ be supported in $[0,1]$, be adapted to some filtration $\cF_0=\{\emptyset,\Omega \},\cF_1, \dots, \cF_n$, and satisfy $\E[X_i | \cF_{i-1}] = p$ for each $i \in [n]$. 
	Denote $X := \sum_{i=1}^n X_i$ and $\mu := \E X$. Then
	\begin{itemize}
		\item (Lower tail.) For any $\Delta \in (0,1)$, $
		\Prob\left( X \leq (1 - \Delta)\mu \right) \leq
		e^{-\Delta^2 \mu / 2}
		$.
		\item (Upper tail.) For any $\Delta \geq 1$,
		$
		\Prob\left( X \geq (1 + \Delta) \mu \right) \leq
		e^{-\Delta \mu / 3}
		$.
	\end{itemize}
\end{lemma}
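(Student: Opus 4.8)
The plan is to prove both tails by the standard exponential-moment (Chernoff--Bernstein) method, with the martingale structure handled by the tower property.

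\textbf{Upper tail.} Fix $\theta > 0$ and apply Markov's inequality to $e^{\theta X}$, giving $\Prob\big(X \geq (1+\Delta)\mu\big) \leq e^{-\theta(1+\Delta)\mu}\,\E[e^{\theta X}]$. The crux is to bound $\E[e^{\theta X}]$ by peeling off one summand at a time. Conditioning on $\cF_{n-1}$ and using convexity of $t \mapsto e^{\theta t}$ on $[0,1]$ (so $e^{\theta t} \leq 1 + t(e^{\theta}-1)$ there) together with $\E[X_n \mid \cF_{n-1}] = p$, one gets the \emph{deterministic} bound $\E[e^{\theta X_n}\mid \cF_{n-1}] \leq 1 + p(e^{\theta}-1) \leq e^{p(e^{\theta}-1)}$. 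Hence $\E[e^{\theta X}] = \E\big[e^{\theta \sum_{i<n} X_i}\,\E[e^{\theta X_n}\mid \cF_{n-1}]\big] \leq e^{p(e^{\theta}-1)}\,\E[e^{\theta \sum_{i<n} X_i}]$, and iterating $n$ times down to $\cF_0$ yields $\E[e^{\theta X}] \leq e^{np(e^{\theta}-1)} = e^{\mu(e^{\theta}-1)}$, using $\mu = \E X = np$. Taking $\theta = \log(1+\Delta)$ gives $\Prob\big(X \geq (1+\Delta)\mu\big) \leq \big(e^{\Delta}/(1+\Delta)^{1+\Delta}\big)^{\mu}$, and it remains to verify the scalar inequality $e^{\Delta}/(1+\Delta)^{1+\Delta} \leq e^{-\Delta/3}$ for $\Delta \geq 1$, i.e. $(1+\Delta)\log(1+\Delta) \geq \tfrac{4}{3}\Delta$; this holds since at $\Delta = 1$ the difference equals $2\log 2 - \tfrac{4}{3} > 0$ and its derivative $\log(1+\Delta) - \tfrac{1}{3}$ is positive on $[1,\infty)$.

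\textbf{Lower tail.} The argument is symmetric: for $\theta > 0$, $\Prob\big(X \leq (1-\Delta)\mu\big) \leq e^{\theta(1-\Delta)\mu}\,\E[e^{-\theta X}]$, and the same convexity-plus-tower peeling (now with $e^{-\theta t} \leq 1 + t(e^{-\theta}-1)$ on $[0,1]$) gives $\E[e^{-\theta X}] \leq e^{\mu(e^{-\theta}-1)}$. Choosing $\theta = -\log(1-\Delta)$ yields $\Prob\big(X \leq (1-\Delta)\mu\big) \leq \big(e^{-\Delta}/(1-\Delta)^{1-\Delta}\big)^{\mu}$, and I would finish with the elementary bound $-\Delta - (1-\Delta)\log(1-\Delta) \leq -\tfrac{1}{2}\Delta^2$ for $\Delta \in (0,1)$; this holds because the left-hand side minus $-\tfrac{1}{2}\Delta^2$ vanishes at $\Delta = 0$ and has derivative $\log(1-\Delta) + \Delta \leq 0$.

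I do not expect a genuine obstacle, as this is a textbook fact; the one point requiring care is that the per-step bound $\E[e^{\theta X_i}\mid \cF_{i-1}] \leq e^{p(e^{\theta}-1)}$ must be a constant, not merely an almost-sure bound by a random conditional mean, so that the tower property can be iterated without accumulating random factors. This is exactly why the hypothesis fixes $\E[X_i \mid \cF_{i-1}]$ to the constant $p$. The only other mild bookkeeping is the two one-line calculus estimates sketched above relating $e^{\Delta}/(1+\Delta)^{1+\Delta}$ and $e^{-\Delta}/(1-\Delta)^{1-\Delta}$ to the clean exponents $e^{-\Delta/3}$ and $e^{-\Delta^2/2}$.
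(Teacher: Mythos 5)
Your proof is correct; the paper does not prove this lemma itself but cites it as standard (Mitzenmacher--Upfal, \S 4), and your argument is exactly the standard martingale Chernoff proof underlying that reference: Markov on $e^{\pm\theta X}$, the convexity bound $e^{\theta t}\leq 1+t(e^{\theta}-1)$ on $[0,1]$ combined with the tower property (where, as you note, the constant conditional mean $p$ is what makes the peeling iterate cleanly and gives $\mu=np$), followed by the two elementary calculus estimates, both of which check out.
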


\begin{proof}[Proof of Lemma~\ref{lem:prob}]
	\underline{Expectation bound.} Define $Z_t := Y_t + ht$. Then $Z_{t}^{\tau} := Z_{t \wedge \tau}$ is a stopped supermartingale with respect to $\cF_t$. Thus by Doob's Optional Stopping Theorem~\citep{Durrett10} (which may be invoked by a.s. boundedness), 
	\[
	A \geq \E Z_0 \geq \E Z_{\tau-1} = \E Y_{\tau-1} + h (\E \tau - 1) \geq a + h(\E\tau - 1)
	\]
	Re-arranging yields $\E[\tau] \leq \tfrac{A - a}{h} + 1$, as desired.
	\par \underline{High probability bound.} 
	For shorthand, denote $B := 2(A-a)$ and $N := \lceil 3B/h \logdel \rceil$.
	By definition of $\tau$, telescoping, and then the bound on $Y_0$,
	\begin{align}
	\Prob\left( \tau > N \right)
	=
	\Prob\left( Y_N > a \right)
	=
	\Prob\left( \sum_{t=1}^N (Y_{t-1} - Y_t) < Y_0 - a \right)
	\leq 
	\Prob\left( \sum_{t=1}^N (Y_{t-1} - Y_t) < A - a \right)
	\label{eq-pf:prob:hp-1}
	\end{align}
	To bound~\eqref{eq-pf:prob:hp-1}, define the process $X_t := (Y_{t-1} - Y_t) / B$. Each $X_t$ is a.s. bounded within $[0,1]$ by the bounded-difference assumption on $Y_t$. Thus by an application of the lower-tail Chernoff bound in Lemma~\ref{lem:chernoff} (combined with a simple stochastic domination argument since $\E[X_t | \cF_{t-1}] \geq h/B$ rather than exactly equal), and then the choice of $N$, we conclude that
	\begin{align}
	\Prob\left( \sum_{t=1}^N (Y_{t-1} - Y_t) < A - a \right)
	=
	\Prob\left( \sum_{t=1}^N X_t < \frac{A - a}{B} \right)
	\leq 
	\exp\left( -\left( 1 - \frac{A-a}{Nh} \right)^2 \frac{Nh}{2B} \right)
	\leq \delta.
	\label{eq-pf:prob:hp-2}
	\end{align}
\end{proof}

\begin{proof}[Proof of Lemma~\ref{lem:wald}]
	Observe that
	\begin{align*}
	\E\left[ \sum_{t=1}^{\tau} Z_t\right]
	&= 
	\sum_{T=1}^{\infty}
	\E\left[
	\sum_{t=1}^{\tau} Z_t \mathds{1}_{\tau = T}
	\right]
	= 
	\sum_{T=1}^{\infty}
	\sum_{t=1}^{T} 
	\E\left[
	Z_t  \mathds{1}_{\tau = T}
	\right]
	=
	\sum_{t=1}^{\infty}
	\sum_{T=t}^{\infty} 
	\E\left[
	Z_t \mathds{1}_{\tau = T}
	\right]
	=
	\sum_{t=1}^{\infty}
	\E\left[
	Z_t \mathds{1}_{\tau \geq t}
	\right],
	\end{align*}
	where the third equality above is because the assumption $Z_i \geq 0$ allows us to invoke Fubini's Theorem. Now since $
	\E\left[
	Z_t \mathds{1}_{\tau \geq t}
	\right]
	= 
	\E\left[
	Z_t | \tau \geq t
	\right] \Prob(\tau \geq t)
	= \E[Z_t] \Prob(\tau \geq t)$ by assumption, we conclude that $\E [\sum_{t=1}^{\tau} Z_t] = \E[Z_1] (\sum_{t=1}^{\infty} \Prob (\tau \geq t)) = \E[Z_1] \E[\tau]$. 
\end{proof}

\subsection{Proof of Theorem~\ref{thm:osb-rand}}\label{app:rand-pf}

Let $\xzero = \zero, \xone,\xtwo,\dots$ denote the iterates, and $\{\cF_t := \sigma(x_1, \dots, x_t)\}_t$ denote the corresponding filtration. Define the stopping time $\tau := \min\{t \in \N_0 : \diag(e^x)K\diag(e^{-x}) \text{ is } \eps\text{-balanced} \}$.
By Lemma~\ref{lem:pot-dec:rand},
\begin{align}
\E\left[ \Phi(\xt) - \Phi(\xtp) \, | \, \cF_t, t \leq \tau \right]
\geq
\frac{1}{4n} \left( \frac{\Phi(\xt) - \Phi^*}{\diamG \log \condK} \vee \eps \right)^2.
\label{eq-pf:pot-dec:rand}
\end{align}

\paragraph*{Case 1: $\boldsymbol{\eps^{-1} \leq \diamG}$.} 

Here, we establish the $O(m \eps^{-2} \log \condK)$ runtime bound both in expectation and w.h.p. To this end, let $T_t$ denote the runtime of iteration $t$, where (solely for analysis purposes) we consider also $t > \tau$ if the algorithm had continued after convergence. Define $Y_t$ to be $\Phi(\xt)$ if $t \leq \tau$, and otherwise $\Phi(\xt) - (t - \tau)\eps^2/4n$ if $t > \tau$. By~\eqref{eq-pf:pot-dec:rand}, we have
\begin{align}
\E\left[ Y_t - Y_{t+1} \, | \, \cF_t, Y_t \geq 0 \right] \geq \frac{\eps^2}{4n}.
\label{eq-pf:pot-dec:rand:case1}
\end{align}
For both expected and h.p. bounds below, we apply Lemma~\ref{lem:prob} to the process $Y_t$ with $A = \log \condK$ (by Lemma~\ref{lem:pot-init}), $a = 0$, and $h = \eps^2/4n$ (by~\eqref{eq-pf:pot-dec:rand:case1}).

\par \underline{Expectation bound.} 
The expectation bound in Lemma~\ref{lem:prob} implies
$\E[\tau] \leq 4n \eps^{-2} \log \condK + 1$. 
Since each iteration has expected runtime $\E[ T_t | \cF_{t-1}] = O(m/n)$ by Lemma~\ref{lem:osb-rand:iter}, Lemma~\ref{lem:wald} ensures that the total expected runtime is $\E T = \E [\sum_{t=1}^{\tau} T_t] = \E \tau \E T_1 = O(m \eps^{-2} \log \condK)$.
\par \underline{H.p. bound.} For shortand, denote $U := 24 n \eps^{-2} \log \condK \logtdel$. The h.p. bound in Lemma~\ref{lem:pot-init} implies that $\Prob(\tau > U) \leq \delta/2$.
By Lemma~\ref{lem:osb-rand:iter}, there is some constant $c > 0$ such that $\E [ T_t] = cm/n$. Since the $T_t$ are independent, a Chernoff bound (Lemma~\ref{lem:chernoff}) implies that $\Prob( \sum_{t=1}^U T_t \leq 2cUm/n) \leq \delta/2$.
Therefore, a union bound implies that with probability at least $1 - \delta$, the total runtime $T = \sum_{t=1}^{\tau} T_\tau$ is at most $2cUm/n = 48c m \eps^{-2} \log \condK \logtdel$.

\paragraph*{Case 2: $\boldsymbol{\eps^{-1} \geq \diamG}$.} Here, we establish the  $O(m \diamG \eps^{-1} \log \condK)$ runtime bound both in expectation and w.h.p. Define $\alpha, \tau_1$, $\tau_2$, $\tau_{1,i}$, and $\phi_i$ as in the analysis of Greedy Osborne (see \S\ref{sec:greedy}).

\par \underline{Expectation bound.} To bound $\E \tau_2$, define $Y_t$ and
apply Lemma~\ref{lem:prob} as in case $1$ above (except now with $A = \eps \diamG \log \condK$) to establish that
\begin{align}
\E\tau_2
\leq
\frac{\eps \diamG \log \condK}{\eps^2/4n} + 1	=
\frac{4n \diamG \log \condK}{\eps} + 1.
\label{eq-pf:rand:exp:2}
\end{align}
Next, we bound $\E \tau_1$. Consider subphase $\tau_{1,i}$ for $i \in [N]$. By an application of Lemma~\ref{lem:prob} on the process $\Phi(x^{(t - \tau_{1,i-1})})$ where $A = \phi_{i-1}$, $a = \phi_i$, and $h = \phi_i^2/(4n d^2 \log^2 \condK)$ from~\eqref{eq-pf:pot-dec:rand},
$
\E\tau_{1,i}
\leq
\frac{4n \diamG^2 \log^2 \condK}{\phi_i} + 1
$.
Thus $\E\tau_1
=
\sum_{i=1}^N
\E\tau_{1,i}
\leq
4n \diamG^2 \log^2 \condK (\sum_{i=1}^N \frac{1}{\phi_i}) + N$. Since $\sum_{i=1}^N \frac{1}{\phi_i} \leq \frac{4}{\eps \diamG \log \condK}$,
\begin{align}
\E\tau_1
\leq
\frac{16n \diamG \log \condK}{\eps} + \log_2 \left \lceil \frac{1}{\eps \diamG} \right \rceil .
\label{eq-pf:rand:exp:1}
\end{align}
Combining~\eqref{eq-pf:rand:exp:2} and~\eqref{eq-pf:rand:exp:1} establishes that
$	\E\tau= \E\tau_1 + \E\tau_2 \leq 21n \diamG \eps^{-1} \log \condK$.
By the $O(m/n)$ per-iteration expected runtime bound in Lemma~\ref{lem:osb-rand:iter} and the variant of Wald's equation in Lemma~\ref{lem:wald}, the total expected runtime is therefore at most $\E T \leq O(m/n) \cdot \E\tau = O(m \diamG \eps^{-1} \log \condK)$.

\par \underline{H.p. bound.} 
By Lemma~\ref{lem:prob}, $\Prob(\tau_2 > 24n \diamG \eps^{-1} \log \condK \log\tfrac{4}{\delta}) \leq \delta/4$. 	
To bound the first phase, define $p_i := \delta / 2^{N-i+3}$ for each $i \in [N]$. By Lemma~\ref{lem:prob}, $\Prob( \tau_{1,i} >
(24n\diamG^2\log^2 \condK \log 1/p_i)/\phi_i
) \leq p_i$. Note that $\sum_{i=1}^N \tfrac{\log 1/p_i}{\phi_i}
=
\tfrac{1}{\phi_N} \sum_{j=0}^{N-1} 2^{-j}( \log 8/\delta + j\log 2)
\leq 
\tfrac{1}{\phi_N} \sum_{j=0}^{\infty} 2^{-j}( \log 8/\delta + j \log 2)
=
\tfrac{2 \log 8/\delta + 2\log 2}{\phi_N}
\leq
\tfrac{6 \log 8/\delta}{ \eps \diamG \log \condK}$.
Thus by a union bound, with probability at most $\sum_{i=1}^N p_i \leq \delta/4$, the first phase has length at most $\tau_1 = \sum_{i=1}^N \tau_{1,i}
\leq 144 n \diamG \eps^{-1} \log \condK \log\tfrac{8}{\delta}$. 
We conclude by a further union bound that, with probability at least $1 - \delta/2$, the total number of iterations is at most $\tau = \tau_1 + \tau_2 \leq 168 n \diamG \eps^{-1} \log \condK \log\tfrac{8}{\delta}$. The proof is complete by an identical Chernoff bound argument as in case 1 above.

\subsection{Proofs for \S\ref{sec:bits}}\label{app:pf:bits}

\begin{proof}[Proof of Lemma~\ref{lem:bits:bal-app}]
	Let $A := \diag(e^x) K \diag(e^{-x})$ denote the corresponding scaling of $K$, and $P := A / \sum_{ij} A_{ij}$ denote its normalization. Similarly for $\tilde{A}$ and $\tilde{P}$. Note that each nonzero entry $\tilde{P}_{ij}$ approximates $P_{ij}$ to a multiplicative factor within $[(1-\gamma)/(1+\gamma), (1+\gamma)/(1-\gamma)] \subset [1-3\gamma, 1+3\gamma]$, where the last step used the assumption that $\gamma < 1/3$.
	Thus each row marginal $r_k(\tilde{P})$ approximates $r_k(P)$ to the same multiplicative factor, and similarly for the column marginals.
	Since $P$ and $\tilde{P}$ are normalized, this implies the additive approximations $|r_k(P) - r_k(\tilde{P})| \leq 3\gamma$, and similarly for the columns. Thus by the triangle inequality, $\|r(P) - c(P)\|_1 
	\leq 
	\|r(\tilde{P}) - c(\tilde{P})\|_1 
	+ 6n\gamma$.
\end{proof}

\begin{proof}[Proof of Lemma~\ref{lem:bits:lse-lip}]
	Let $x,y \in \R^n$. By the elementary inequality that $\min_i (a_i/b_i) \leq (\sum_{i=1}^n a_i) / (\sum_{i=1}^n b_i) \leq \max_i (a_i/b_i)$ for any $a,b \in \Rppn$, 
	\[
	\log \sum_{i=1}^n e^{x_i} - \log \sum_{i=1}^n e^{y_i}
	=
	\log \frac{\sum_{i=1}^n e^{x_i}}{\sum_{i=1}^n e^{y_i}}
	\leq
	\log \max_i e^{x_i - y_i}
	=
	\max_i x_i - y_i
	\leq
	\|x-y\|_{\infty},
	\]
	and similarly $\log \sum_{i=1}^n e^{x_i} - \log \sum_{i=1}^n e^{y_i} \geq \log \min_i e^{x_i - y_i} = \min_i x_i - y_i \geq -\|x-y\|_{\infty}$. We conclude that $|\log \sum_{i=1}^n e^{x_i} - \log \sum_{i=1}^n e^{y_i}| \leq \|x-y\|_{\infty}$.
\end{proof}

\begin{proof}[Proof of Lemma~\ref{lem:bits:phi-lip}]
	Let $x,y \in \Rn$. Clearly $|(x_i - x_j) - (y_i - y_j)| \leq 2\|x-y\|_{\infty}$ for any $i,j \in [n]$. Thus by Lemma~\ref{lem:bits:lse-lip}, $|\Phi(x) - \Phi(y)| = |\log (\sum_{(i,j) \in \suppK}e^{x_i - x_j + \log K_{ij}}) - \log (\sum_{(i,j) \in \suppK}e^{y_i - y_j + \log K_{ij}})| \leq  2\|x-y\|_{\infty}$.
\end{proof}

\begin{proof}[Proof of Lemma~\ref{lem:bits:lse}]
	Since $\log\sum_{i=1}^n e^{z_i} = \max_{j} z_j + \log \sum_{i=1}^n e^{z_i - (\max_j z_j)}$, we may assume without loss of generality after translation that each $z_i \leq 0$ and at least one $z_i = 0$. Since we need only approximate $\log \sum_{i=1}^n e^{z_i}$ to $\plusminus \tau$ accuracy, we can truncate each $z_i$ to additive accuracy $\plusminus O(\tau)$ by  Lemma~\ref{lem:bits:lse-lip}, and also drop all $z_i$ below $-\log\tfrac{n}{O(\tau)}$. To summarize, in order to compute $\log \sum_{i=1}^n e^{z_i}$ to $\plusminus \tau$, it suffices to compute $\log\sum_{i=1}^k e^{\tilde{z}_i}$ to $\plusminus O(\tau)$
	where $k \leq n$, each $\tilde{z}_i \in [-\log\tfrac{n}{O(\tau)}, 0]$, and each $\tilde{z}_i$ is represented by a number with at most $O(\log(\tfrac{\log (n/\tau)}{\tau} )) = O(\log\tfrac{1}{\tau} + \log \log n)$ bits. Now to compute $\log \sum_{i=1}^k e^{\tilde{z}_i}$ to $\plusminus O(\tau)$, we can tolerate computing each $e^{\tilde{z}_i}$ to multiplicative accuracy $(1\plusminus O(\tau))$. Thus since $e^{\tilde{z}_i} \geq O(\tau/n)$, we can tolerate computing each $e^{\tilde{z}_i}$ to additive accuracy $\plusminus O(\tau^2/n)$. Since $e^{\tilde{z}_i} \in
	[0,1]$, it therefore suffices to compute $e^{\tilde{z}_i}$ using $O(\log \tfrac{1}{\tau^2/n}) = O(\log \tfrac{n}{\tau})$ bits of precision. 
\end{proof}

\section{Connections to Matrix Scaling and Sinkhorn's algorithm}\label{app:sink}

Here, we continue the discussion in Remark~\ref{rem:scaling} by briefly mentioning two further connections between Osborne's algorithm for Matrix Balancing and Sinkhorn's algorithm for Matrix Scaling.
\\ \\ \noindent \textbf{Parallelizability.} In contrast to Osborne's algorithm for Matrix Balancing, Sinkhorn's algorithm for Matrix Scaling is so-called ``embarassingly parallelizable''. We briefly explain this in terms of the connection between parallelizability and graph coloring (see \S\ref{ssec:prelim:par}). For the Matrix Scaling problem on $K \in \Rp^{m \times n}$, the associated graph has vertex set $L \cup R$ where $|L| = m$ and $|R| = n$, and edge set $\{(i,j) : i \in [m], j \in [n], K_{ij} \neq 0 \}$. This graph is \emph{bipartite} and thus trivially \emph{$2$-colorable}, which is why Sinkhorn's algorithm can safely update all coordinates in $L$ or $R$ in parallel.
\\ \\ \noindent \textbf{Bit-complexity.} In Theorem~\ref{thm:osb-all:bits}, we showed that many variants of Osborne's algorithm can be implemented over numbers with logarithmically few bits, and still achieve the same runtime bounds. By a nearly identical argument, it can be shown that the analogous result applies to Sinkhorn's algorithm. This saves a similar factor of up to roughly $O(n)$ in the bit-complexity for poorly connected inputs. Moreover, this modification is also helpful for well-connected inputs, in particular for the application of Optimal Transport, where the matrix $K$ to scale is dense yet has exponentially large entries which require bit-complexity $O(L(\log n)/\eps)$ in the notation of~\citep[Remark 1]{AltWeeRig17}. This modification reduces the bit-complexity to only logarithmic size $O(\log(Ln/\eps))$.

\footnotesize
\addcontentsline{toc}{section}{References}
\bibliographystyle{abbrv}
\bibliography{bal}{}

\end{document}